\newcommand{\bbC}{{\mathbb{C}}}
\newcommand{\bbN}{{\mathbb{N}}}
\newcommand{\bbR}{{\mathbb{R}}}
\newcommand{\cB}{{\mathcal B}}
\newcommand{\cC}{{\mathcal C}}
\newcommand{\cD}{{\mathcal D}}
\newcommand{\cH}{{\mathcal H}}
\newcommand{\cM}{{\mathcal M}}
\newcommand{\cS}{{\mathcal S}}
\newcommand{\cV}{{\mathcal V}}
\newcommand{\cW}{{\mathcal W}}
\newcommand{\cX}{{\mathcal X}}
\newcommand{\gQ}{\mathfrak Q}
\newcommand{\gq}{\mathfrak q}
\DeclareMathOperator{\supp}{supp}
\DeclareMathOperator{\dom}{dom}
\DeclareMathOperator{\tr}{tr}
\DeclareMathOperator*{\nlim}{n-lim}
\DeclareMathOperator*{\slim}{s-lim}
\DeclareMathOperator*{\wlim}{w-lim}
\renewcommand{\ln}{\text{\rm ln}}
\newcommand{\dott}{\,\cdot\,}
\newcommand{\abs}[1]{\lvert#1\rvert}
\newcommand{\norm}[1]{\lVert#1\rVert}
\newcommand{\loc}{\text{\rm{loc}}}
\newcommand{\beq}{\begin{equation}}
\newcommand{\enq}{\end{equation}}
\newcommand{\Om}{\Omega}
\newcommand{\dOm}{{\partial\Omega}}
\newcommand{\LOm}{L^2(\Om;d^nx)}
\newcommand{\LdOm}{L^2(\dOm;d^{n-1} \omega)}
\newcommand{\ga}{\gamma}
\newcommand{\sgn}{{\textrm{sgn}}}
\newcommand{\no}{\notag}
\newcommand{\lb}{\label}
\newcommand{\f}{\frac}
\newcommand{\ol}{\overline}
\newcommand{\wti}{\widetilde}
\newcommand{\Oh}{O}
\newcommand{\hatt}{\widehat}
\newcommand{\bi}{\bibitem}
\newcommand{\prodm}{\mu \otimes \mu}
\renewcommand{\le}{\leqslant}
\let\geq\geqslant
\let\leq\leqslant
\def\theequation{\@arabic\c@equation}
\numberwithin{equation}{section}
\newtheorem{theorem}{Theorem}[section]
\newtheorem{lemma}[theorem]{Lemma}
\newtheorem{corollary}[theorem]{Corollary}
\newtheorem{definition}[theorem]{Definition}
\newtheorem{hypothesis}[theorem]{Hypothesis}
\newtheorem{example}[theorem]{Example}
\theoremstyle{remark}
\newtheorem{remark}[theorem]{Remark}
\begin{document}

\title[Heat Kernel Bounds for Elliptic PDEs in Divergence Form]{Heat Kernel Bounds for Elliptic Partial Differential Operators in Divergence Form with Robin-Type Boundary Conditions}

\author[F.\ Gesztesy]{Fritz Gesztesy}
\address{Department of Mathematics,
University of Missouri, Columbia, MO 65211, USA}
\email{gesztesyf@missouri.edu}
\urladdr{http://www.math.missouri.edu/personnel/faculty/gesztesyf.html}

\author[M.\ Mitrea]{Marius Mitrea}
\address{Department of Mathematics,
University of Missouri, Columbia, MO 65211, USA}
\email{mitream@missouri.edu}
\urladdr{http://www.math.missouri.edu/personnel/faculty/mitream.html}

\author[R.\ Nichols]{Roger Nichols}
\address{Mathematics Department, The University of Tennessee at Chattanooga, 
415 EMCS Building, Dept. 6956, 615 McCallie Ave, Chattanooga, TN 37403, USA}
\email{Roger-Nichols@utc.edu}
\urladdr{http://rogeranichols.jimdo.com/}

\thanks{Work of M.\,M.\ was partially supported by the Simons Foundation Grant $\#$\,281566. }
\thanks{To appear in {\it Journal d'Analyse Math\'ematique.}}
\date{\today}
\subjclass[2010]{Primary 35J15, 35J25, 35J45, 47D06; Secondary 46E35, 47A10, 47A75, 47D07.}
\keywords{Positivity preserving semigroups, elliptic partial differential operators, 
Robin boundary conditions, heat kernel bounds, Green's function bounds.}

\begin{abstract} 
One of the principal topics of this paper concerns the realization of self-adjoint operators 
$L_{\Theta, \Om}$ in $L^2(\Om; d^n x)^m$, $m, n \in \bbN$, associated with divergence form 
elliptic partial differential expressions $L$ with (nonlocal) Robin-type boundary conditions in bounded 
Lipschitz domains $\Om \subset \bbR^n$. In particular, we develop the theory 
in the vector-valued case and hence focus on matrix-valued differential expressions $L$ which act as  
$$
Lu = - \biggl(\sum_{j,k=1}^n\partial_j\bigg(\sum_{\beta = 1}^m 
a^{\alpha,\beta}_{j,k}\partial_k u_\beta\bigg)
\bigg)_{1\leq\alpha\leq m},  \quad u=(u_1,\dots,u_m). 
$$
The (nonlocal) Robin-type boundary conditions are then of the form 
$$
\nu \cdot A D u + \Theta \big[u\big|_{\partial \Om}\big] = 0 \, \text{ on $\partial \Om$},
$$ 
where $\Theta$ represents an appropriate operator acting on Sobolev spaces associated with the 
boundary $\partial \Om$ of $\Om$, $\nu$ denotes the outward pointing normal unit vector on 
$\partial\Om$, and $Du:=\bigl(\partial_j u_\alpha\bigr)_{\substack{1\leq\alpha\leq m\\ 1\leq j\leq n}}$. 

Assuming $\Theta \geq 0$ in the scalar case $m=1$, we prove Gaussian heat kernel bounds for 
$L_{\Theta, \Om}$ by employing positivity preserving arguments for the associated semigroups and 
reducing the problem to the corresponding Gaussian heat kernel bounds for the case of Neumann 
boundary conditions on $\partial \Om$. We also discuss additional zero-order potential coefficients $V$ 
and hence operators corresponding to the form sum $L_{\Theta, \Om} + V$.
\end{abstract}

\maketitle

{\scriptsize{\tableofcontents}}

\section{Introduction}  \lb{s1}

In a nutshell, this paper is devoted to a new class of self-adjoint realizations $L_{\theta, \Om}$ in 
$L^2(\Om; d^n x)$ of elliptic partial differential expressions in divergence form, 
\begin{equation} 
L = - \sum_{j,k=1}^n\partial_ja_{j,k}\partial_k,   \lb{1.0} 
\end{equation} 
on bounded Lipschitz domains 
$\Om \subset \bbR^n$, $n\geq 2$, with Robin boundary conditions of the form 
$[\nu \cdot A \nabla u + \theta u]|_{\partial \Om} = 0$. (Here $\nu$ denotes the outward pointing 
normal unit vector and $\theta$ is a suitable function on the boundary $\partial \Om$ of $\Om$.) Following 
\cite{GM09}, we put particular emphasis on developing a theory of nonlocal Robin boundary conditions where the function $\theta$ on $\partial \Om$ is replaced by a suitable operator $\Theta$ acting in 
$L^2(\partial \Om; d^{n-1} \omega)$, with $d^{n-1} \omega$ representing the surface measure on 
$\partial \Om$. (More precisely, $\Theta$ acts in appropriate Sobolev spaces on the boundary of 
$\Om$, cf.\ Section \ref{s3}.) The resulting self-adjoint operator in $L^2(\Om; d^n x)$ 
is then denoted by $L_{\Theta, \Om}$ and we study its resolvent and semigroup, proving a Gaussian 
heat kernel bound and a corresponding bound for the Green's function of $L_{\Theta, \Om}$ on the 
basis of positivity preserving arguments. 

The corresponding case of Dirichlet boundary conditions, and similarly (although, to a somewhat lesser extent), the one of Neumann boundary conditions on $\partial \Om$ has been extensively studied in the literature. An authoritative  survey of this area until about 1989 was written by Davies \cite{Da89}, and 
newer developments since then were treated by Ouhabaz \cite{Ou05} in 2005. For the study of elliptic 
operators on Lie groups and the associated semigroup kernels we refer to the monograph by Robinson 
\cite{Ro91}. A thorough study of the heat equation and the heat kernel of the Laplacian on Riemannian 
manifolds has been undertaken by Grigor'yan \cite{Gr09}, and very recently, Dirichlet and Neumann 
heat kernels and associated two-sided bounds were developed for inner uniform domains by 
Gyrya and Saloff-Coste \cite{GS11}. 

The case of Robin-type boundary conditions on the other hand, received much less attention. The 
latter experienced a boost due to the seminal paper by Arendt and ter Elst \cite{AtE97} in 1997, and 
the past fifteen years have seen a number of interesting developments in this area.  

Before specializing to matters related to heat kernel bounds, we digress a bit and 
mention some of the literature devoted to Robin boundary conditions: A probabilistic approach to 
Robin-type (or third) boundary value problems for $L = \Delta + b \cdot \nabla + q$ was undertaken by Papanicolaou \cite{Pa90} for bounded $C^3$-domains $\Om$; the case of the Robin problem for the Laplacian on general domains, including fractals, was treated by Bass, Burdzy, and Chen \cite{BBC08}. 
The short-time asymptotics of the heat kernel for the Laplacian with Robin boundary conditions on a 
smooth domain was discussed by Zayed \cite{Za98}. Inequalities between Robin and Dirichlet 
eigenvalues for the Laplacian on bounded Lipschitz domains $\Om$ were derived by 
Filonov \cite{Fi05}. (The paper by Filonov motivated two of us to extend such results to the case of 
nonlocal Robin Laplacians which lead to their introduction in \cite{GM09}.) An isoperimetric inequality 
similar to the Faber--Krahn inequality for Robin Laplacians (and more generally, in the case of mixed Dirichlet and Robin boundary conditions on parts of the boundary) on bounded Lipschitz domains was 
established by Daners \cite{Da06}; in this isoperimetric context we also mention work by Kennedy 
\cite{Ke08}, \cite{Ke09}, \cite{Ke10}, \cite{Ke10a}. Uniqueness in the Faber--Krahn inequality for Robin Laplacians was derived by Daners and Kennedy \cite{DK07}, and for a recent alternative approach to the Faber--Krahn inequality for $p$-Laplacians with Robin boundary conditions we refer to Bucur and Daners \cite{BD10}. Domain perturbations for elliptic equations with Robin boundary conditions were studied by Dancer and Daners \cite{DD97}; domain 
monotonicity for the principal eigenvalue of Robin Laplacians was investigated by Giorgi and Smits 
\cite{GS05}, \cite{GS08}. Spectral stability of Robin Laplacians and Laplacians with mixed boundary conditions on bounded domains (Lipschitz and more general than that) defined in terms of quadratic forms 
were treated by Barbatis, Burenkov, Lamberti, and Lanza de Cristoforis in various collaborations 
\cite{BBL10}, \cite{BLL08}, \cite{BL08}. Eigenvalue asymptotics for Robin 
Laplacians was studied by Daners and Kennedy \cite{DK10}. Schatten--von Neumann properties 
of resolvent differences and the asymptotic behavior of singular values of nonlocal Robin Laplacians on bounded smooth domains were investigated by  Behrndt, Langer, Lobanov, Lotoreichik, and Popov \cite{BLLLP10}; in the case of local Robin boundary conditions these results were subsequently improved 
and extended to strongly elliptic symmetric partial differential operators by Grubb \cite{Gr11} (see 
also \cite{Gr11a}). Nodal 
line domains for Laplacians with (local) Robin boundary conditions on bounded Lipschitz domains 
were studied in \cite{Ke11}. The Robin boundary value problem for the two-dimensional Laplacian 
in domains with cusps was studied by Kamotski and Maz'ya \cite{KM11}. Hardy inequalities for Robin Laplacians were derived by Kova{\v r}{\' i}k and Laptev \cite{KL12}. The problem of minimizing the 
$n$th eigenvalue of the Robin Laplacian was recently investigated by Antunes, Freitas, and Kennedy 
\cite{AFK12}.  

Returning to Gaussian heat kernel bounds for uniformly elliptic partial differential operators, the 
literature associated with Dirichlet boundary conditions is so enormous that one would be hard-pressed 
to do it justice in the introduction to a manuscript of the present scale. Instead, we refer to \cite{Da89} and \cite{Ou05} which nicely survey the 
state of literature until about 1989 and 2005, respectively. Thus, we will now focus on studies in connection with Robin boundary conditions, the principal topic of this paper. We already mentioned the influential paper 
by Arendt and ter Elst \cite{AtE97} on Gaussian heat kernel bounds for $L_{\theta, \Om}$ on bounded Lipschitz domains $\Om$ with local Robin boundary conditions indexed by the function $\theta$ on 
$\partial \Om$. The authors study the case of real, not necessarily symmetric matrices 
$a_{j,k} \in L^{\infty}(\Om; d^n x)$, $1 \leq j,k \leq n$, satisfying a uniform ellipticity condition, assuming 
$0 \leq \theta \in L^{\infty}(\partial \Om; d^{n-1} \omega)$, and permitting lower-order (complex-valued) coefficients under some smoothness hypotheses. Subsequently, Daners \cite{Da00a}, in the case of  
real-valued coefficients, extended their results by removing the smoothness assumptions on the 
lower-order coefficients. Moreover, Ouhabaz \cite{Ou04} and \cite[Sect.\ 4.1, Ch.\ 6]{Ou05}, further extended these results by permitting complex-valued coefficients $a_{j,k}$ in $L$ which introduces 
a variety of challenges. One should also mention that the results developed by these authors also 
permit mixed boundary conditions in the sense that one may have Dirichlet boundary conditions at 
a part of the boundary and Neumann or Robin boundary conditions on the rest of the boundary.  
In addition, local Robin-type problems were studied by Daners \cite{Da00} on arbitrary bounded and open 
sets $\Om \subset \bbR^n$; he proved positivity improving of the underlying semigroups if $\Om$ is connected and $0 < \theta \in L^{\infty} (\partial \Om; d^{n-1}\omega)$. The case of arbitrary bounded 
and open sets $\Om \subset \bbR^n$ was also treated by Arendt and Warma \cite{AW03}, 
\cite{AW03a}, \cite{Wa02} (see also \cite{Wa06}), who developed  
a general theory with $\theta$ replaced by a (positive) measure on the boundary, and who proved  
Gaussian heat kernel bounds in this context. In particular, a characterization of generalized local Robin 
Laplacians whose semigroups satisfy domination by the Neumann semigroup (cf.\ \eqref{3.19} for details) 
was provided in \cite{AW03a}. Positivity preserving semigroups in the case of bounded Lipschitz 
domains $\Om$ and sign indefinite 
$\theta \in L^{\infty} (\partial \Om; d^{n-1}\omega)$ were established by Daners \cite{Da09} (and 
again mixed boundary conditions were permitted). For an illuminating survey on semigroups and 
heat kernel estimates we also refer to \cite{Ar04}. 

Next, we briefly recall some facts concerning positivity preserving integral operators. For 
simplicity, we describe the concrete case of linear operators in the Hilbert space $L^2(\Om; d^n x)$ 
($\Om \subset \bbR^n$ a bounded Lipschitz domain)  
and refer to Section \ref{s2} for the abstract setting: A bounded operator 
$A$ in $L^2(\Om; d^n x)$ is called {\it positivity preserving} if 
\begin{equation} 
0 \leq f \in L^2(\Om; d^n x) \backslash \{0\} \, \text{ implies } \, A f \geq 0 \, 
\text{ a.e.}      \lb{1.1}
\end{equation} 
This will be denoted by 
$A \succcurlyeq 0$ (resp., by $0 \preccurlyeq A$). More generally, $A \succcurlyeq B$ 
(resp., $B \preccurlyeq A$) then abbreviates that $A-B$ is positivity preserving. A key result in this context then concerns the fact that a bounded integral operator $A$ 
in $L^2(\Om; d^n x)$ is positivity preserving if and only if its integral kernel $A(\cdot,\cdot)$ is 
nonnegative a.e.\ on $\Om \times \Om$. In particular, if $A$ and $B$ are bounded integral operators in 
$L^2(\Om; d^n x)$, then 
\begin{equation}
0 \preccurlyeq B \preccurlyeq A \, \text{ if and only if } \, 0 \leq B(\cdot,\cdot) \leq A(\cdot,\cdot) 
\, \text{ a.e.\ on $\Om \times \Om$.}    \lb{1.2} 
\end{equation}
In particular, $0 \preccurlyeq B \preccurlyeq A$ yields a bound on the integral kernel of $B$ in terms of that of $A$. 

Our approach to proving a Gaussian heat kernel bound for the class of operators $L_{\Theta, \Om}$ 
then rests on the following strategy: Assuming $\Theta \geq 0$, we will prove that the semigroups 
$e^{- t L_{\Theta, \Om}}$, $t \geq 0$, are positivity preserving, and so are the differences 
$e^{- t L_{N, \Om}} - e^{- t L_{\Theta, \Om}}$, $t \geq 0$, where $L_{N, \Om}$ denotes the case 
of Neumann boundary conditions (i.e., the special case $\Theta = 0$). Thus, assuming 
$\Theta \geq 0$, we will prove 
\begin{equation}
0 \preccurlyeq e^{- t L_{\Theta, \Om}} \preccurlyeq e^{- t L_{N, \Om}}, \quad t\geq 0,   \lb{1.3}
\end{equation}
and hence obtain the Gaussian heat kernel bound for $L_{\Theta, \Om}$ by means of \eqref{1.2} and 
the corresponding known Gaussian heat kernel bounds for $L_{N, \Om}$. 

We briefly turn to a description of the contents of each section: Section \ref{s2} is devoted to an abstract 
discussion of positivity preserving (and improving) integral operators with special emphasis on resolvents and semigroups. Much of the material in this section revolves about an important early paper by 
Davies \cite{Da73} and the remarkable papers by Bratteli, Kishimoto, and Robinson \cite{BKR80} and Kishimoto and Robinson \cite{KR80}. 

Section \ref{s3} develops in detail the theory of nonlocal Robin-type operators $L_{\Theta, \Om}$ on 
bounded Lipschitz domains $\Om \subset \bbR^n$. In fact, we will go a step further and treat the 
$m \times m$ matrix-valued case, $m\in \bbN$, where $L$ acts like 
\begin{equation}\label{1.4}
Lu = - \biggl(\sum_{j,k=1}^n\partial_j\bigg(\sum_{\beta = 1}^m 
a^{\alpha,\beta}_{j,k}\partial_k u_\beta\bigg)
\bigg)_{1\leq\alpha\leq m},\quad u=(u_1,\dots,u_m). 
\end{equation}
While we will naturally follow the outline of an earlier treatment in the special scalar case $m=1$ of the Laplacian, $L = - \Delta$ in \cite{GM09}, we emphasize that the presence of the tensor coefficient 
$A =\bigl(a_{j,k}^{\alpha,\beta}\bigr)_{\substack{1\leq\alpha,\beta\leq m \\ 1\leq j,k\leq n}} 
\in L^{\infty}(\Om; d^nx)^{m\times m}$ requires a careful re-examination and extension 
of the earlier arguments in \cite{GM09}. We also note that our results are of interest in the special case 
of local Robin boundary conditions where $\Theta$ corresponds to the operator $M_{\theta}$ of multiplication by the function $\theta$ defined on $\partial \Om$ as we do not have to assume that 
$\theta \in L^\infty (\partial \Om; d^{n-1} \omega)$, but are able to permit appropriate $L^p$-conditions 
on $\theta$.   

In our final Section \ref{s4} we then derive the Gaussian heat kernel bounds for $L_{\Theta, \Om}$ employing the positivity preserving (in fact, positivity improving) arguments as outlined in connection 
with \eqref{1.3}. This section concludes with a series of remarks that enhances the main result on 
Gaussian heat kernel bounds, including the addition of a nonnegative potential term 
$0 \leq V \in L^1_{\loc} (\Om; d^n x)$ to $L_{\Theta, \Om}$ using the method of forms, and appropriate negative and small form potential perturbations. 

Appendix \ref{sA} summarizes the principal results of Sobolev spaces on Lipschitz domains 
$\Om \subset \bbR^n$ and on their boundaries $\partial \Om$, and Appendix \ref{sB} is devoted to 
the basics of sesquilinear forms and their associated operators. Both appendices provide the basics for 
Section \ref{s3} and are included to guarantee a certain degree of self-containment of this paper. 
Appendix \ref{sC} recalls some bounds for heat kernels and Green's functions; in particular, 
we provide a streamlined approach to Green's function bounds including the case of dimension $n=2$.  

Finally, we briefly summarize some of the notation used in this paper: 
Let $\cH$ be a separable complex Hilbert space, $(\cdot,\cdot)_{\cH}$ the scalar product in $\cH$
(linear in the second argument), and $I_{\cH}$ the identity operator in $\cH$.

Next, if $T$ is a linear operator mapping (a subspace of) a Hilbert space into another, then 
$\dom(T)$ and $\ker(T)$ denote the domain and kernel (i.e., null space) of $T$. 
The closure of a closable operator $S$ is denoted by $\ol S$. 
The spectrum, essential spectrum, discrete spectrum, and resolvent set 
of a closed linear operator in a Hilbert space will be denoted by $\sigma(\cdot)$, 
$\sigma_{\rm ess}(\cdot)$, $\sigma_{\rm d}(\cdot)$, and $\rho(\cdot)$, respectively. 

The convergence in the strong operator topology (i.e., pointwise limits) will be denoted by $\slim$. 
Similarly, limits in the weak (resp., norm) topology are abbreviated by $\wlim$ (resp., by $\nlim$). 


The Banach spaces of bounded and compact linear operators on a separable complex Hilbert space 
$\cH$ are denoted by $\cB(\cH)$ and $\cB_\infty(\cH)$, respectively; the corresponding 
$\ell^p$-based trace ideals will be denoted by $\cB_p (\cH)$, $p>0$. The trace of trace class operators in $\cH$ is denoted by ${\tr}_{\cH}(\cdot)$.  The analogous notation 
$\cB(\cX_1,\cX_2)$, $\cB_\infty (\cX_1,\cX_2)$, etc., will be used for bounded and compact 
operators between two Banach spaces $\cX_1$ and $\cX_2$. 
Moreover, $\cX_1\hookrightarrow \cX_2$ denotes the continuous embedding
of the Banach space $\cX_1$ into the Banach space $\cX_2$. 

Given a $\sigma$-finite measure space, $(M,\cM,\mu)$, the product measure on 
$M \times M$ will be denoted by $\prodm$. Without loss of generality, we also denote the completion of the product measure space $(M\times M, \cM \otimes \cM, \prodm)$ by the same symbol and always work with this completion in the following. For brevity, the identity operator in 
$L^2(M; d\mu)$ is denoted by $I_M$, the scalar product and norm in $L^2(M; d\mu)$ are abbreviated 
by $(\dott,\dott)_M$ and $\|\dott \|_M$, whenever the underlying measure is understood. For a linear 
subspace $\cD$ of $L^2(M; d\mu)$, the cone of nonnegative elements in $\cD$ is denoted by $\cD_+$. 
Lastly, $\chi_{{}_S}$ denotes the characteristic function of the set $S$.

\section{On Positivity Preserving/Improving Integral Operators}  \lb{s2}

We start by recalling some basic facts on positivity preserving/improving operators. 
Throughout this paper we will make the following assumption.

\begin{hypothesis} \lb{h2.1}
Let $(M,\cM,\mu)$ denote a $\sigma$-finite, separable measure space associated with a nontrivial 
measure $($i.e., $0 < \mu(M) \leq \infty$$)$. 
\end{hypothesis}

Assuming Hypothesis \ref{h2.1}, $L^2(M; d\mu)$ represents the associated complex, separable Hilbert space (cf.\ \cite[Sect.\ 1.5]{BS88} and \cite[p.\ 262--263]{Jo82} for additional facts in this context).

The set of nonnegative elements $0 \leq f \in L^2(M; d\mu)$ (i.e., $f(x) \geq 0$ $\mu$-a.e.) is a cone 
in $L^2(M; d\mu)$, closed in the norm and weak topologies. 

\begin{definition} \lb{d2.2} 
Let $A$ be a linear operator $A$ in $L^2(M; d\mu)$. Then $A$ is called {\it positivity preserving} 
$($resp., {\it positivity improving}$)$ if
\begin{equation}
0 \neq f \in \dom(A), \, f \geq 0 \text{ $\mu$-a.e.\ implies } \, A f \geq 0 \, 
\text{ $($resp., $Af > 0$$)$ $\mu$-a.e.}
\end{equation}
The operator $A$ is called {\it indecomposable} if $\{0\}$ and $L^2(M; d\mu)$ are the only 
closed subspaces of $L^2(M; d\mu)$ left invariant by $A$. 
\end{definition}

Positivity preserving (resp., improving) of $A$ will be denoted by 
\begin{equation}
A \succcurlyeq 0 \, \text{ (resp., $A \succ 0$).}
\end{equation}
(or by $0 \preccurlyeq A$ (resp., $0 \prec A$)). More generally, if 
$A, B \in \cB\big(L^2(M; d\mu)\big)$, then  
\begin{equation}
A \succcurlyeq B \, \text{ (resp., $A \succ B)$}
\end{equation}
(or $B \preccurlyeq A$ (resp., $B \prec A$)), by definition, imply that 
$A - B$ is positivity preserving (resp., positivity improving). We note parenthetically, 
that in large parts of the pertinent literature, ``positivity preserving operators'' are just called ``positive'', 
but due to the obvious conflict of notation with the concept of positive operators in a spectral theoretic context, we follow the mathematical physics community which prefers the notion of positivity preserving. 

Moreover, if $A$ has the property that whenever $0 \neq f \in \dom(A)$ then also $|f| \in \dom(A)$ (in particular, if $\dom(A) = L^2(M; d\mu)$) one infers that  
\begin{equation}
\text{$A \succcurlyeq 0$ if and only if $|A f| \leq A |f|$ $\mu$-a.e.}    \lb{2.4} 
\end{equation}
(upon decomposing $f= f_+ - f_-$, $f_{\pm} = (|f| \pm f)/2$).  
Similarly, one concludes that  
\begin{align}
\begin{split}
& \text{$A \succcurlyeq 0$ (resp., $A \succ 0$) if and only if $(f, A g)_M \geq 0$
(resp., $(f, A g)_M >0$)}     \lb{2.5} \\
& \quad \text{for all $0 \leq f \in L^2(M; d\mu) \backslash \{0\}$, 
$0 \leq g \in \dom(A) \backslash \{0\}$,}
\end{split}
\end{align} 
using the fact that 
\begin{align}
\begin{split}
& g \in L^2(M; d\mu) \backslash \{0\} \, \text{ is nonnegative (resp., strictly positive) if 
and only if } \\
& \quad  (f,g)_M \geq 0 \, \text{ (resp., $(f,g)_M > 0$) for all 
$0 \leq f \in L^2(M; d\mu) \backslash \{0\}$.}
\end{split} 
\end{align}
This, in turn, follows from $\sigma$-finiteness of $M$, that is, 
$M = \bigcup_{n\in\bbN} M_n$, where $M_n \in \cM$, $\mu(M_n) < \infty$, 
defining $S=\{x \in M \, | \, g(x) < 0\}$, $S_n = S \cap M_n$, $n\in\bbN$, and from 
the fact that if $\mu(S_{n_0}) \neq 0$ for some $n_0 \in \bbN$ (recalling that $\mu(M) > 0$), 
\begin{equation}
0 \leq (\text{ resp., $<$ }) \; (\chi_{{}_{S_{n_0}}},g)_M 
= \int_M \chi_{{}_{S_{n_0}}}(x) g(x) \, d\mu (x) \leq 0. 
\end{equation}

Turning our attention to integral operators in $L^2(M; d\mu)$ with associated integral kernels 
$A(\cdot,\cdot)$ on $M \times M$, we assume that
\begin{equation}
\text{$A(\cdot,\cdot): M \times M \to \bbC$ is $\prodm$-measurable,} 
\end{equation}  
and introduce the integral operator $A$ generated by the integral kernel $A(\cdot,\cdot)$ 
as follows: 
\begin{align}
& (A f)(x) = \int_M A(x,y) f(y) \, d\mu(y) \, \text{ for $\mu$-a.e.\ $x\in M$,}    \no \\
& f \in \dom(A) = \bigg\{g \in L^2(M; d\mu) \,\bigg| \, A(x,\cdot) g(\cdot) \, \text{is integrable 
over $M$ for $\mu$-a.e.\ $x \in M$,} \no \\
& \hspace*{5cm} \text{and} \, \int_M A(\cdot,y) g(y) \, d\mu(y) \in L^2(M; d\mu)\bigg\}.   
\lb{2.7}
\end{align}

As shown in \cite[Theorem\ 11.1]{Jo82}, 
\begin{equation}
A \in \cB\big(L^2(M; d\mu)\big) \, \text{ whenever } \, \dom(A) = L^2(M; d\mu)  
\end{equation}
(as $A$ turns out to be closed in this case). In addition, we also note that by \cite[Theorem\ 11.2]{Jo82}, 
$|A(\cdot,\cdot)|$ generates a bounded integral operator in $L^2(M; d\mu)$ if and only if 
\begin{equation}
\int_{M \times M} |f(x) A(x,y) g(y)| \, d(\prodm)(x,y) < \infty, \quad f, g \in L^2(M; d\mu). 
\end{equation} 

For additional results on integral operators we refer, for instance, to \cite[Sect.\ 9.5]{Ed95}, \cite{HS78a}, \cite{Ko65}, \cite[Ch.\ 6]{We80}.  

Next, we state the following result (special cases of which appear to be well-known, but we know of no published proof at this instant).

\begin{theorem} \lb{t2.3}
Assume Hypothesis \ref{h2.1} and suppose $($without loss of generality\,$)$ that $M$ can be written 
as $M = \bigcup_{n\in\bbN} M_n$, where $M_n \in \cM$, $\mu(M_n) < \infty$, $n\in\bbN$. Moreover, suppose that $A$ is an integral operator in $L^2(M; d\mu)$ generated by the  integral kernel 
$A(\cdot,\cdot)$ on $M \times M$ satisfying 
\begin{equation}
A(\cdot,\cdot) |_{M_m \times M_n} \in 
L^1(M_m \times M_n, \cM_m \otimes \cM_n; d(\mu_m \otimes \mu_n)), \quad 
m, n \in \bbN,
\end{equation}
where 
\begin{align}
\cM_n &= M_n \cap \cM = \{M_n \cap S \, | \, S \in \cM\} 
= \{ T \subseteq M_n \, | \, T \in \cM\},   \quad n \in \bbN,   \\
\mu_n &= \mu|_{M_n}, \quad n \in \bbN. 
\end{align}
Then $A$ is positivity preserving if and only if 
\begin{equation}
A(\cdot,\cdot) \geq 0 \;\, \prodm \text{-a.e.\ on } M\times M.    \lb{2.13} 
\end{equation} 
\end{theorem}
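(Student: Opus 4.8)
The implication ``$A(\cdot,\cdot)\ge 0$ $\prodm$-a.e.\ $\Longrightarrow$ $A\succcurlyeq 0$'' is immediate: for $0\le f\in\dom(A)$ the integrand in $(Af)(x)=\int_M A(x,y)f(y)\,d\mu(y)$ is nonnegative wherever it is defined, so $(Af)(x)\ge 0$ $\mu$-a.e. Thus the content lies entirely in the converse, which I would prove by contraposition; the idea is to test the positivity-preserving property against characteristic functions $\chi_{{}_T}$ of finite-measure subsets $T$ of the exhausting sets $M_n$ and then recover the pointwise sign of the kernel by a measure-theoretic approximation.

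Assume $A\succcurlyeq 0$. It suffices, by Tonelli's theorem on the rectangles $M_m\times M_n$ (on which the kernel is integrable) together with a countable union over $n$, to show that for each fixed $n$ one has $A(x,\cdot)\ge 0$ $\mu$-a.e.\ on $M_n$ for $\mu$-a.e.\ $x\in M$; integrating the negative part of $A(\cdot,\cdot)$ then gives $A(\cdot,\cdot)\ge 0$ $\prodm$-a.e. The integrability hypothesis $A(\cdot,\cdot)|_{M_m\times M_n}\in L^1$, with Tonelli on $M_m\times M_n$ and summation over $m$, shows that $A(x,\cdot)\in L^1(M_n)$ for $\mu$-a.e.\ $x$; for such $x$, $\nu_x(T):=\int_T A(x,y)\,d\mu(y)$, $T\subseteq M_n$ measurable, is a finite signed measure on $M_n$, absolutely continuous with respect to $\mu$, with $\mu$-density $A(x,\cdot)$. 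Since $L^2(M;d\mu)$ is separable (Hypothesis \ref{h2.1}), I would fix a countable family $\cG_n$ of measurable subsets of $M_n$ that approximates every measurable subset of $M_n$ in $\mu$-measure.

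The decisive point is that each $\chi_{{}_T}$ with $T\in\cG_n$ lies in $\dom(A)$; positivity preservation then gives $\nu_x(T)=(A\chi_{{}_T})(x)\ge 0$ for $\mu$-a.e.\ $x$, and taking the union of these countably many $\mu$-null exceptional sets over $T\in\cG_n$ produces a single $\mu$-conull set of $x$ on which $\nu_x(T)\ge 0$ for all $T\in\cG_n$ simultaneously. Verifying $\chi_{{}_T}\in\dom(A)$ is the place where the hypotheses genuinely enter — the local $L^1$-integrability of the kernel, combined with truncating $x$ to $\{x:\int_{M_n}|A(x,y)|\,d\mu(y)\le p\}$, keeps $x\mapsto\int_M A(x,y)\chi_{{}_T}(y)\,d\mu(y)$ locally bounded — and this is the step I expect to be the main obstacle; it is automatic when $\dom(A)=L^2(M;d\mu)$, which by \cite[Thm.\ 11.1]{Jo82} forces $A$ to be bounded, the situation relevant to the resolvent and semigroup applications of later sections.

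It then remains to pass from ``$\nu_x(T)\ge 0$ for all $T\in\cG_n$'' to ``$A(x,\cdot)\ge 0$ $\mu$-a.e.\ on $M_n$'', a Hahn-decomposition argument: if $\mu(\{y\in M_n:A(x,y)<0\})>0$ for some good $x$, then, writing this set as the countable union over $k\in\bbN$ of $\{y\in M_n:A(x,y)<-1/k\}$, one of these, say $B$, has positive measure, so $\nu_x(B)\le-\mu(B)/k<0$; by absolute continuity of the finite measure $S\mapsto\int_S|A(x,y)|\,d\mu(y)$ on $M_n$ I may choose $T\in\cG_n$ with $\int_{T\triangle B}|A(x,y)|\,d\mu(y)<\mu(B)/(2k)$, and then $\nu_x(T)\le\nu_x(B)+\int_{T\triangle B}|A(x,y)|\,d\mu(y)<0$, a contradiction. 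Hence $A(x,\cdot)\ge 0$ $\mu$-a.e.\ on $M_n$ for $\mu$-a.e.\ $x$, which, as noted above, yields the claim. Beyond the domain bookkeeping just flagged, the only non-elementary ingredient is the separability of $L^2(M;d\mu)$, used to produce the countable approximating family $\cG_n$; the rest is routine measure theory.
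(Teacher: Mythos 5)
Your proof is essentially correct and reaches the same conclusion, but by a genuinely different route from the paper's. The paper works directly on the product space: it sets $S_{m,n}=\{(x,y)\in M_m\times M_n \,|\, A(x,y)<0\}$, invokes \cite[Lemma\ 11.1.2]{Jo82} to approximate $S_{m,n}$ by a finite disjoint union of measurable rectangles $S_j(\varepsilon)\times T_j(\varepsilon)$ up to error $\varepsilon$ in the $L^1$-integral of $A$, uses Fubini to rewrite $\int_{S_j\times T_j} A \,d(\mu_m\otimes\mu_n)$ as $\big(\chi_{{}_{S_j}},A\chi_{{}_{T_j}}\big)_M\geq 0$, and lets $\varepsilon\downarrow 0$ to force $(\mu_m\otimes\mu_n)(S_{m,n})=0$. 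You instead disintegrate by Fubini \emph{first}, reduce to showing that $A(x,\cdot)\geq 0$ $\mu$-a.e.\ on $M_n$ for $\mu$-a.e.\ $x$, introduce the slice measures $\nu_x$ and a countable measure-dense family $\cG_n$ of subsets of $M_n$ (available by the separability assumed in Hypothesis \ref{h2.1}), and close with a Hahn-decomposition-type contradiction. Both approaches are valid; the paper's is slightly more compact (one appeal to a single rectangle-approximation lemma on the product), while yours factorizes the problem into one-dimensional slices and trades the rectangle lemma for absolute continuity and a countable test family, which some may find more transparent.

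On the one issue you flag: you are right that the argument needs $\chi_{{}_T}\in\dom(A)$ for the relevant finite-measure sets $T$, and that the stated hypotheses do not formally guarantee the $L^2$-membership of $A\chi_{{}_T}$ required by \eqref{2.7}. But note that the paper's own proof relies on exactly the same implicit fact, in the step $\big(\chi_{{}_{S_j(\varepsilon)}},A\chi_{{}_{T_j(\varepsilon)}}\big)_M\geq 0$, without comment. Your proposed remedy of truncating in the $x$-variable controls the integrand locally but does not by itself produce $A\chi_{{}_T}\in L^2(M;d\mu)$, so it does not actually repair the membership; the honest resolution is the one you already name, namely that $\dom(A)=L^2(M;d\mu)$ (equivalently $A\in\cB(L^2(M;d\mu))$, as in the semigroup and resolvent applications of Section \ref{s4}), in which case the point is automatic. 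Since this caveat applies equally to the paper's proof, it does not constitute a gap in yours relative to the paper's.
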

\begin{proof}
Clearly, a positivity preserving integral operator of the type \eqref{2.7} must have a real-valued integral kernel $A(\cdot,\cdot)$. Sufficiency of the condition \eqref{2.13} 
for positivity preserving of $A$ then follows directly from the representation of $A$ in 
\eqref{2.7}. 

In order to prove necessity of the condition \eqref{2.13} it suffices to proof that positivity preserving of $A$ 
implies that 
\begin{equation}
A(\cdot,\cdot) |_{M_m \times M_n} \geq 0 \;\, \mu_m \otimes \mu_n \text{-a.e.\ on } 
M_m\times M_n, \quad m, n \in \bbN.    \lb{2.14}
\end{equation} 
Next, for each $(m,n) \in \bbN^2$, we introduce the set 
\begin{equation}
S_{m,n} = \{(x,y) \in M_m \times M_n \, | \, A(x,y) < 0 \; \mu_m \otimes \mu_n \text{-a.e.}\}.
\end{equation}
Then $S_{m,n} \in \cM_m \otimes \cM_n$, $(m,n) \in \bbN^2$. 

By \cite[Lemma\ 11.1.2]{Jo82}, for each $\varepsilon > 0$, there exist disjoint measurable 
rectangles $S_j(\varepsilon) \times T_j(\varepsilon) \in \cM_m \otimes \cM_n$, 
with $\mu_m(S_j(\varepsilon))< \infty$, $\mu_n(T_j(\varepsilon)) < \infty$, 
$j = 1,\cdots,N(\varepsilon)$, $N(\varepsilon) \in \bbN$, such that with
\begin{equation}
R_{N(\varepsilon)} = \bigcup_{j=1}^{N(\varepsilon)} [S_j(\varepsilon) \times T_j(\varepsilon)], 
\end{equation}   
one infers that 
\begin{equation}
\bigg| \int_{S_{m,n}} A(x,y) \, d (\mu_m \otimes \mu_n)(x,y) 
- \int_{R_{N(\varepsilon)}} A(x,y) \, d (\mu_m \otimes \mu_n)(x,y) \bigg| \leq \varepsilon. 
\lb{2.17} 
\end{equation}
Next we claim that 
\begin{equation}
\int_{R_{N(\varepsilon)}} A(x,y) \, d (\mu_m \otimes \mu_n)(x,y) \geq 0.    \lb{2.18} 
\end{equation}
Indeed, since $A(\cdot,\cdot) |_{M_m \times M_n} \in 
L^1(M_m \times M_n, \cM_m \otimes \cM_n; d(\mu_m \otimes \mu_n))$, and 
$\chi_{{}_{S_j(\varepsilon)}} \in L^2(M_m; d \mu_m)$,  
$\chi_{{}_{T_j(\varepsilon)}} \in L^2(M_n; d \mu_n)$ as $\mu_m(S_j(\varepsilon))< \infty$, 
$\mu_n(T_j(\varepsilon)) < \infty$ by hypothesis, an application of Fubini's theorem yields 
\begin{align}
&\int_{S_j(\varepsilon) \times T_j(\varepsilon)} A(x,y) \, d (\mu_m \otimes \mu_n)(x,y)  \\
&\quad= \int_M \chi_{{}_{S_j(\varepsilon)}}(x) \bigg(\int_M A(x,y)\chi_{{}_{T_j(\varepsilon)}}(y) 
\, d\mu(y) \bigg) d\mu(x)\nonumber\\
&\quad =\big(\chi_{{}_{S_j(\varepsilon)}}, A\chi_{{}_{T_j(\varepsilon)}}\big)_{M} \geq 0, 
\quad j = 1,\cdots,N(\varepsilon),      \lb{2.20}
\end{align}
as $A$ is assumed to be positivity preserving. Thus, combining \eqref{2.17} and 
\eqref{2.20} one concludes that 
\begin{align}
& \int_{S_{m,n}} A(x,y) \, d (\mu_m \otimes \mu_n)(x,y)    \no \\
& \quad = \bigg(\int_{S_{m,n}} A(x,y) \, d (\mu_m \otimes \mu_n)(x,y) 
- \int_{R_{N(\varepsilon)}} A(x,y) \, d (\mu_m \otimes \mu_n)(x,y) \bigg)   \no \\ 
& \qquad + \int_{R_{N(\varepsilon)}} A(x,y) \, d (\mu_m \otimes \mu_n)(x,y) 
\in [- \varepsilon, \infty).    \lb{2.21} 
\end{align}
Letting $\varepsilon \downarrow 0$, \eqref{2.21} finally yields that 
\begin{equation}
\int_{S_{m,n}} A(x,y) \, d (\mu_m \otimes \mu_n)(x,y) \geq 0
\end{equation}
and hence 
\begin{equation}
(\mu_m \otimes \mu_n)(S_{m,n}) = 0.
\end{equation}
Consequently,
\begin{equation}
(\mu \otimes \mu) \bigg(\bigcup_{(m,n) \in \bbN^2}S_{m,n}\bigg) = 0, 
\end{equation}
implying \eqref{2.13}. 
\end{proof}

\begin{remark} \lb{r2.4}
In connection with the sets $M_n$, $n\in\bbN$, in Theorem \ref{t2.3}, which are used to formulate a substitute for the lack of local integrability of the integral kernel 
$A(\cdot,\cdot)$ (due to the absence of any topology imposed on $M$), we note that 
one can assume that the $M_n$ are mutually disjoint, $M_m \cap M_n = \emptyset$, 
$m \neq n$, or else, that they are nesting, $M_n \subseteq M_{n+1}$, $m, n \in \bbN$. 
In addition, we note that upon introducing 
$L_n = \bigcup_{j=1}^n M_j$, $n \in \bbN$, the Cartesian product of $M$ with itself 
takes on the simple form $M \times M = \bigcup_{n\in\bbN} \big[L_n \times L_n\big]$.    
\end{remark} 

\begin{remark} \lb{r2.5}
It is clear that if the integral kernel for $A$ satisfies $A(\cdot,\cdot)>0$ $\prodm$-a.e., then 
$A$ is indecomposable (cf.\ \cite[Sect.\ 7.1]{Da80}), in fact, positivity improving, that is, $A \succ 0$. 
The converse, however, is clearly false as the following elementary example shows.
\end{remark} 

\begin{example} \lb{e2.6} 
In the Hilbert space $L^2([0,1]; d\mu_0)$ with 
\begin{equation}
\mu_0 (B) = \int_B dx + \chi_{{}_{B}} (1/2), \quad B \in \mathfrak{B}(\bbR),
\end{equation}
where $\mathfrak{B}(\bbR)$ denotes the Borel $\sigma$-algebra on $\bbR$, and 
$\chi_{{}_S} (\cdot)$ the characteristic function of the set $S$, consider the rank-one 
operator $A_0$ 
with associated integral kernel $A_0(\cdot,\cdot)$ given by  
\begin{equation}
A_0(x,y) = a_0(x) a_0(y), \quad a_0(t) = |t-(1/2)|, \quad x, y, t \in [0,1].  
\end{equation}
Then $A_0(\cdot,\cdot) \geq 0$ and $A_0  \succ 0$, but clearly $A_0(\cdot,\cdot)$ is 
{\textit{\textbf{not} strictly positive}} $\mu_0 \otimes \mu_0$-a.e.
\end{example}

Indeed, using the fact that $h \in L^2(M; d\mu)$ is strictly positive (i.e., $h > 0$ $\mu$-a.e.) if 
and only if $(f,h)_M > 0$ for all $0 \leq f \in L^2(M; d\mu) \backslash \{0\}$, one concludes, 
upon identifying $M=[0,1]$ and $d\mu$ with Lebesgue measure $dt$, that 
\begin{equation}
\int_{[0,1]} a_0(t) f(t) \, d\mu_0 (t) = \int_{[0,1]} a_0(t) f(t) \, dt > 0, \quad 0 \leq f \in L^2([0,1]; d\mu_0) \backslash \{0\},
\end{equation} 
implying
\begin{align}
(f, A_0 g)_{[0,1]} & = \int_{[0,1]} a_0(x) f(x) \, d\mu_0(x) \int_{[0,1]} a_0 (y) g(y) \, d\mu_0(y)    \no \\
& = \int_{[0,1]} a_0(x) f(x) \, dx \int_{[0,1]} a_0 (y) g(y) \, dy > 0,       \\
& \hspace*{2.23cm} 0 \leq f, g \in L^2([0,1]; d\mu_0) \backslash \{0\}.    \no 
\end{align} 

One observes that the subset of positivity preserving operators of $\cB(\cH)$,
\begin{equation}
\{A \in \cB(\cH) \,|\, A \succcurlyeq 0\}
\end{equation}
is a cone, closed under multiplication, taking adjoints, and under taking weak operator limits. 

As an immediate consequence of \eqref{2.5}, we note that if $A, B \in\cB(\cH)$ then
\begin{align}
& A \succcurlyeq 0 \, \text{ if and only if } \, A^* \succcurlyeq 0,    \no \\ 
& A \succcurlyeq B \succcurlyeq 0 \, \text{ if and only if } \, A^* \succcurlyeq B^* \succcurlyeq 0,      
\lb{2.21A} \\ 
& \, \text{if } \, A \succcurlyeq 0, \; B \succcurlyeq 0 \, \text{ then } \, A B \succcurlyeq 0.    \no 
\end{align}

We also recall the following (slight refinement of a) useful result discussed in \cite{Da73} on 
domination in connection with trace class and Hilbert-Schmidt operators and we add one more 
item with respect to compactness :

\begin{lemma}  \lb{l2.7}
Assume Hypothesis \ref{h2.1} and suppose that 
\begin{equation}
A, B \in \cB\big(L^2(M; d\mu)\big) \, \text{ and } \, A \succcurlyeq B \succcurlyeq 0.   \lb{2.21a}
\end{equation} 
$(i)$ The following norm bound holds, 
\begin{equation}
\|A\|_{\cB(L^2(M; d\mu))} \geq \|B\|_{\cB(L^2(M; d\mu))}.    \lb{2.22}
\end{equation} 
$(ii)$ Assume in addition to \eqref{2.21a} that $A \in \cB_{2n}\big(L^2(M; d\mu)\big)$ for some 
$n \in \bbN$. Then 
also $B \in \cB_{2n}\big(L^2(M; d\mu)\big)$ and 
\begin{equation}
\|A\|_{\cB_{2n}(L^2(M; d\mu))} \geq \|B\|_{\cB_{2n}(L^2(M; d\mu))}.    \lb{2.23} 
\end{equation} 
$(iii)$ Assume in addition to \eqref{2.21a} that $A \in \cB_{\infty}\big(L^2(M; d\mu)\big)$.\ Then 
also $B \in \cB_{\infty}\big(L^2(M; d\mu)\big)$. \\ 
$(iv)$ Assume in addition to \eqref{2.21a} that  $A \geq 0$, $B \geq 0$, and 
$A \in \cB_1\big(L^2(M; d\mu)\big)$. Then also $B \in \cB_1\big(L^2(M; d\mu)\big)$ and 
\begin{equation}
{\tr}_{L^2(M; d\mu)} (A) = \|A\|_{\cB_1(L^2(M; d\mu))} \geq \|B\|_{\cB_1(L^2(M; d\mu))} 
= {\tr}_{L^2(M; d\mu)} (B).    \lb{2.24}
\end{equation} 
\end{lemma}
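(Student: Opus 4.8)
The plan is to dispatch $(i)$ first, then reduce the remaining items to part $(iv)$, which carries the real content; part $(iii)$ I expect to be the sticking point. For $(i)$: since $\dom(A)=\dom(B)=L^2(M;d\mu)$, formula \eqref{2.4} applies, so $|Bf|\le B|f|$ $\mu$-a.e.\ for every $f$; moreover $A-B\succcurlyeq 0$ and $|f|\ge 0$ force $(A-B)|f|\ge 0$, i.e.\ $B|f|\le A|f|$ $\mu$-a.e. Hence $0\le|Bf|\le A|f|$ pointwise $\mu$-a.e., so $\|Bf\|=\||Bf|\|\le\|A|f|\|\le\|A\|_{\cB(L^2(M;d\mu))}\|f\|$, and taking the supremum over $\|f\|=1$ gives \eqref{2.22}. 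Next I record the algebraic fact, used repeatedly below, that $A\succcurlyeq B\succcurlyeq 0$ implies $A^{*}A\succcurlyeq B^{*}B\succcurlyeq 0$: indeed $A^{*}A-B^{*}B=A^{*}(A-B)+(A^{*}-B^{*})B$, and by \eqref{2.21A} (closedness of the cone of positivity preserving operators under adjoints, products, and sums) each summand is a product of two positivity preserving operators. The identity $X^{n}-Y^{n}=X(X^{n-1}-Y^{n-1})+(X-Y)Y^{n-1}$ then shows, inductively, that $X\succcurlyeq Y\succcurlyeq 0$ (bounded) entails $X^{n}\succcurlyeq Y^{n}\succcurlyeq 0$ for all $n\in\bbN$; in particular $(A^{*}A)^{n}\succcurlyeq(B^{*}B)^{n}\succcurlyeq 0$, and these are self-adjoint, operator-nonnegative, with $\tr\big((A^{*}A)^{n}\big)=\|A\|_{\cB_{2n}(L^2(M;d\mu))}^{2n}$ and likewise for $B$.

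For $(iv)$ (so now $A,B\ge 0$ in the operator sense, $A-B\succcurlyeq 0$, $A\in\cB_{1}$): using that $(M,\cM,\mu)$ is $\sigma$-finite and separable (Hypothesis \ref{h2.1}), fix an increasing sequence of finite-rank conditional-expectation projections $C_{N}$, built from an increasing sequence of finite partitions into sets of finite positive measure whose generated $\sigma$-algebras exhaust $\cM$ modulo $\mu$-null sets. Each $C_{N}$ is an orthogonal projection onto the span of the orthonormal, \emph{nonnegative} normalized indicator functions $e^{(N)}_{i}$ of the partition cells, is positivity preserving (its kernel is a nonnegative simple function), and $C_{N}\to I_{M}$ strongly by $L^{2}$-martingale convergence. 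Now $C_{N}(A-B)C_{N}\succcurlyeq 0$ has finite rank and annihilates $\ran(C_{N})^{\perp}$, so its trace, computed in the basis $\{e^{(N)}_{i}\}$ of $\ran(C_{N})$, equals $\sum_{i}\big(e^{(N)}_{i},(A-B)e^{(N)}_{i}\big)\ge 0$ because $e^{(N)}_{i}\ge 0$ and $(A-B)e^{(N)}_{i}\ge 0$; hence $\tr(C_{N}BC_{N})\le\tr(C_{N}AC_{N})$. Since $A\ge 0$ and $C_{N}\le I_{M}$, monotonicity of the trace gives $\tr(C_{N}AC_{N})=\tr\big(A^{1/2}C_{N}A^{1/2}\big)\le\tr(A)$, while $\tr(C_{N}BC_{N})=\sum_{j}\|C_{N}B^{1/2}e_{j}\|^{2}$ for any orthonormal basis $\{e_{j}\}$, so Fatou's lemma yields $\tr(B)=\sum_{j}\|B^{1/2}e_{j}\|^{2}\le\liminf_{N}\tr(C_{N}BC_{N})$. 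Combining, $\tr(B)\le\tr(A)<\infty$, whence $B^{1/2}\in\cB_{2}$, so $B=(B^{1/2})^{2}\in\cB_{1}$, and \eqref{2.24} follows. (Alternatively, $(iv)$ and the $n=1$ case of $(ii)$ admit the classical kernel proof: a Hilbert--Schmidt operator is an integral operator, Theorem \ref{t2.3} makes the kernel of $A$ nonnegative, the domination transfers to the kernels, the Hilbert--Schmidt norm is the $L^{2}(M\times M)$-norm of the kernel, and $\tr$ is the integral of the canonical diagonal; the $C_{N}$ argument above has the advantage of avoiding the usual diagonal-set subtlety.)

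Part $(ii)$ is then immediate from the algebraic reduction: $(A^{*}A)^{n}\succcurlyeq(B^{*}B)^{n}\succcurlyeq 0$ with both operators operator-nonnegative and $(A^{*}A)^{n}\in\cB_{1}$ since $A\in\cB_{2n}$, so part $(iv)$ gives $(B^{*}B)^{n}\in\cB_{1}$, hence $B\in\cB_{2n}$, together with $\|B\|_{\cB_{2n}}^{2n}=\tr\big((B^{*}B)^{n}\big)\le\tr\big((A^{*}A)^{n}\big)=\|A\|_{\cB_{2n}}^{2n}$, which is \eqref{2.23}.

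Finally, for $(iii)$: passing to $A^{*}A\succcurlyeq B^{*}B\succcurlyeq 0$ reduces matters, since $B\in\cB_{\infty}$ iff $B^{*}B\in\cB_{\infty}$, to showing that a positivity preserving operator dominated by a compact operator is compact. The natural attempt is to write the compact operator $A^{*}A$ as the operator-norm limit of the finite-rank operators $C_{N}(A^{*}A)C_{N}$ (valid since $C_{N}\to I_{M}$ strongly and $A^{*}A$ is compact), to note that $C_{N}(A^{*}A)C_{N}\succcurlyeq C_{N}(B^{*}B)C_{N}\succcurlyeq 0$ with $C_{N}(B^{*}B)C_{N}$ of finite rank, and then to transfer the norm approximation to $B^{*}B$ itself. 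I expect this transfer to be the main obstacle: positivity preserving domination interacts neither with spectral truncation nor with the complementary projections $I_{M}-C_{N}$, and the crude lattice bound $|B^{*}Bf|\le(B^{*}B)|f|\le(A^{*}A)|f|$ is too lossy to produce an estimate uniform in $f$. This assertion is in fact an instance of the Dodds--Fremlin order-compactness phenomenon (applicable here since $L^{2}(M;d\mu)$ and its dual have order continuous norm), so, absent a self-contained argument along the lines just sketched, one simply invokes that theorem.
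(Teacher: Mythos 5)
Your proof is correct and follows essentially the same route as the paper's: part $(i)$ via the elementary pointwise domination estimate $\|Bf\|_M\le\|A|f|\|_M\le\|A\|\,\|f\|_M$; part $(iv)$ via a filtering increasing family of positivity-preserving finite-rank projections $P_L$ (your $C_N$) onto spans of normalized indicator functions, the diagonal comparison $\sum_j(\chi_{E_j},A\chi_{E_j})_M\ge\sum_j(\chi_{E_j},B\chi_{E_j})_M$, and a Fatou argument (the paper cites the non-commutative Fatou lemma where you apply Fatou for the series $\sum_j\|C_NB^{1/2}e_j\|_M^2$, which amounts to the same thing); part $(ii)$ by reducing to $(iv)$ applied to the pair $(A^\ast A)^n\succcurlyeq(B^\ast B)^n\succcurlyeq 0$; and part $(iii)$ by citing Dodds--Fremlin/Pitt, exactly as the paper does. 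The one cosmetic improvement in your write-up is the explicit algebraic telescoping $A^\ast A-B^\ast B=A^\ast(A-B)+(A^\ast-B^\ast)B$ and $X^n-Y^n=X(X^{n-1}-Y^{n-1})+(X-Y)Y^{n-1}$, which makes precise what the paper summarizes as ``repeatedly employing \eqref{2.21A}.''
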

\begin{proof}
$(i)$ We refer to \cite[Lemma\ 1.2]{Da73} for the proof of \eqref{2.22} (see also the 
paragraph following Theorem \ref{t2.6} for a simple argument). \\
$(ii)$ Repeatedly employing \eqref{2.21A} one infers that 
$A \succcurlyeq B \succcurlyeq 0$ implies $A^* A \succcurlyeq B^* B \succcurlyeq 0$ and hence 
\begin{equation}
(A^* A)^n \succcurlyeq (B^* B)^n \succcurlyeq 0. 
\end{equation}
Recalling that $T \in \cB_{2n}(\cH)$ if and only if $T^* T \in \cB_{n}(\cH)$, which in turn is equivalent 
to $(T^* T)^n \in \cB_1(\cH)$ and that
\begin{equation}
\|T\|_{\cB_{2n}(\cH)}^{2n} = \|T^* T\|_{\cB_{n}(\cH)}^{n} = \big\|(T^* T)^n\big\|_{\cB_{1}(\cH)} 
= {\tr}_{\cH} \big((T^* T)^n\big) , 
\end{equation}
one can apply item $(iv)$ (with the pair $A, B$ replaced by $(A^* A)^n, (B^* B)^n$) to conclude that 
$B \in \cB_{2n}\big(L^2(M; d\mu)\big)$ and 
\begin{align}
\begin{split} 
{\tr}_{L^2(M; d\mu)} \big((A^* A)^n\big) &= \big\|(A^* A)^n\big\|_{\cB_1(L^2(M; d\mu))}    \\
& \geq \big\|(B^* B)^n\big\|_{\cB_1(L^2(M; d\mu))}  = {\tr}_{L^2(M; d\mu)} \big((B^* B)^n\big).
\end{split}
\end{align} 
Item $(iii)$ is a special case of a result proven in \cite{DF79} and \cite{Pi79} (see also \cite{Le82}). \\
$(iv)$ Following \cite[Lemma\ 1.2]{Da73}, one constructs a filtering increasing set of orthogonal projections $P_L$ in $L^2(M; d\mu)$ 
associated with closed subspaces $L$ generated by finitely many characteristic functions such that $P_L$ strongly converges to $I_M$. Since the details of this construction are a bit involved, we now pause and present the precise argument: First, one recalls from Hypothesis \ref{h2.1} that $(M,\cM,\mu)$ is a 
$\sigma$-finite measure space. Consider $\cM_\ast = \{E\in\cM \,|\, 0<\mu(E)<\infty\}$ and,  
for every set $E\in\cM_\ast$, define $\widetilde{\chi}_E = \mu(E)^{-1/2}\chi_{{}_E}$, that is, 
the $L^2$-normalization of the characteristic function of $E$. Next, for every 
finite family $\{E_1,\dots,E_N\}$ of mutually disjoint sets in $\cM_\ast$ introduce 
\begin{equation}\label{MAR-A.0}
L(E_1,\dots,E_N) = \bigg\{\sum_{j=1}^N a_j\widetilde{\chi}_{E_j}\,\bigg|\,a_j\in{\mathbb{R}},
\,1\leq j\leq N\bigg\}, 
\end{equation} 
that is, the linear span of the $\widetilde{\chi}_{E_i}$'s. Going further, introduce
\begin{equation}\label{MAR-A.1}
{\mathfrak{L}}(M,\cM,\mu) = \{L(E_1,\dots,E_N) \,|\, N\in{\mathbb{N}}
\, \text{and} \, \{E_1,\dots,E_N\}\subseteq\cM_\ast \}, 
\end{equation} 
and equip this family of sets with the partial order induced by the inclusion. 
Finally, for each $L=L(E_1,\dots,E_N)\in{\mathfrak{L}}(M,\cM,\mu)$ denote by $P_L$
the operator of projection in $L^2(M;d\mu)$ onto the linear (closed) subspace 
$L$ of $L^2(M;d\mu)$. Hence, given that the collection 
$\{\widetilde{\chi}_{E_j}\}_{1\leq j\leq N}$ is an orthonormal 
basis for $L(E_1,\dots,E_N)$, for every function $f\in L^2(M;d\mu)$, one has   
\begin{equation}\label{MAR-A.2}
P_Lf=\sum_{j=1}^N\bigg(\int_{M}f\widetilde{\chi}_{E_j}\,d\mu\bigg)
\widetilde{\chi}_{E_j}
=\sum_{j=1}^N\bigg(\mu(E_j)^{-1}\int_{E_j}f\,d\mu\bigg)\chi_{{}_{E_j}}.
\end{equation} 
Moreover, for every $L_1,L_2\in{\mathfrak{L}}(M,\cM,\mu)$ such that 
$L_2\subseteq L_1$, the Pythagorean theorem implies
\begin{equation}\label{MAR-A.3}
\|f-P_{L_2}f\|^2_M = \|f-P_{L_1}f\|^2_M + \|P_{L_1}f-P_{L_2}f\|^2_M
\end{equation} 
for every $f\in L^2(M;d\mu)$. This implies that for every $f\in L^2(M;d\mu)$
the bound
\begin{equation}\label{MAR-A.4}
\|f-P_{L_1}f\|_M \leq \|f-P_{L_2}f\|_M,
\end{equation} 
holds whenever $L_1,L_2\in{\mathfrak{L}}(M,\cM,\mu)$ are such that $L_2\subseteq L_1$. 
At this stage, we make the claim that the filtering increasing family of projections
$\{P_L\}_{L\in{\mathfrak{L}}(M,\cM,\mu)}$ converges strongly in 
$L^2(M;d\mu)$ to the identity on $M$, that is, 
\begin{equation}\label{MAR-A.5}
\mbox{s-}\lim_{L}P_L=I_M \, \text{ in }\, L^2(M;d\mu).
\end{equation} 
To justify the above claim, fix an arbitrary function $f\in L^2(M;d\mu)$, 
along with an arbitrary number $\varepsilon>0$. Also, for each 
$\lambda\in(0,\infty)$ set $f_\lambda:=f\chi_{{}_{E_\lambda}}$ where 
$E_\lambda:= \{x\in M \,|\, |f(x)|\leq\lambda\}$. Hence, $E_\lambda\in\cM$ 
for every $\lambda\in(0,\infty)$. Since the sequence 
$\bigl\{f_\lambda\bigr\}_{\lambda>0}$ converges
to $f$ in $L^2(M;d\mu)$ as $\lambda\to\infty$ (by Lebesgue's Dominated Convergence 
Theorem), one can find $\lambda_0\in(0,\infty)$ with the property that  
\begin{equation}\label{MAR-A.6}
\|f-f_{\lambda_0}\|_M \leq \varepsilon/8.
\end{equation} 
Next, given that the measure space
$(M,\cM,\mu)$ is $\sigma$-finite, there exists a family $\{M_k\}_{k\in{\mathbb{N}}}$ 
of mutually disjoint sets in $\cM_\ast$ with the property that 
$M=\bigcup\limits_{k\in{\mathbb{N}}}M_k$. Since the sequence 
$\{f_{\lambda_0}\chi_{{}_{\cup_{k=1}^n M_k}}\}_{n\in{\mathbb{N}}}$ converges
to $f_{\lambda_0}$ in $L^2(M;d\mu)$ as $n\to\infty$ (again, by Lebesgue's 
Dominated Convergence Theorem), one can find $n_0\in{\mathbb{N}}$ such that 
\begin{equation}\label{MAR-A.7}
\|f_{\lambda_0}-f_{\lambda_0}\chi_{{}_{\cup_{k=1}^{n_0} M_k}}\|_M \leq \varepsilon/8.
\end{equation} 
To proceed, abbreviate $M_0 = \chi_{{}_{\cup_{k=1}^{n_0} M_k}}$ and 
$f_0 = f_{\lambda_0}\chi_{{}_{M_0}}$. Also, pick an arbitrary integer $n$ such 
that $n>\lambda_0$ (whose actual value is to be specified later). 
In particular, $|f_0(x)|<n$ for every $x\in M$, and $f_0(x)=0$ for every 
$x\in M \backslash M_0$. Also, from \eqref{MAR-A.6} and \eqref{MAR-A.7} one infers that 
\begin{equation}\label{MAR-A.7BIS}
\|f-f_0\|_M \leq \varepsilon/4.
\end{equation} 
For each integer $k\in [1-n2^n,n2^n]$, we now define 
\begin{equation}\label{MAR-A.8}
E_{n,k} = f_0^{-1}\bigl(\bigl[\tfrac{k-1}{2^n},\tfrac{k}{2^n}\bigr)\bigr)\cap M_0.
\end{equation} 
By design, for every $k\in [1-n2^n,n2^n]\cap{\mathbb{Z}}$ we then obtain
\begin{align}\label{MAR-A.9}
& E_{n,k}\in\cM,\quad E_{n,k}\subseteq M_0,      \\
&\text{and } \, \tfrac{k-1}{2^n}\leq f_0(x)<\tfrac{k}{2^n}\,\,\mbox{ for each }\,\,x\in E_{n,k}.
\label{MAR-A.10}
\end{align} 
Furthermore, 
\begin{equation}\label{MAR-A.11}
\text{the family $\{E_{n,k}\}_{1-n2^n\leq k \leq n2^n}$ is a 
disjoint partition of $M_0$}.
\end{equation} 
Hence, whenever $k\in [1-n2^n,n2^n]\cap{\mathbb{Z}}$ is such that
$\mu(E_{n,k})>0$, we have $E_{n,k}\in\cM_\ast$ and  
\begin{equation}\label{MAR-A.12}
\tfrac{k-1}{2^n}\leq\mu(E_{n,k})^{-1}\int_{E_{n,k}}f_0\,d\mu\leq\tfrac{k}{2^n}.
\end{equation} 
In light of \eqref{MAR-A.10}, this shows that 
whenever $k\in [1-n2^n,n2^n]\cap{\mathbb{Z}}$ is such that
$\mu(E_{n,k})>0$ we have 
\begin{equation}\label{MAR-A.13}
\bigg|f_0(x)-\mu(E_{n,k})^{-1}\int_{E_{n,k}}f_0\,d\mu\bigg|\leq 2^{-n}
\, \text{ for each }\, x\in E_{n,k}.
\end{equation} 
Next, thin out the family $\{E_{n,k}\}_{1-n2^n\leq k \leq n2^n}$ by 
throwing away the measure zero sets, and relabel the remaining ones as 
$\{E_1,\dots,E_N\}$. In addition, consider $L = L(E_1,\dots,E_N)\in{\mathfrak{L}}(M,\cM,\mu)$. 
Because of \eqref{MAR-A.11} it follows that $\{E_1,\dots,E_N\}$ is, up to a set of 
measure zero, a disjoint partition of $M_0$. Based on this, the fact that 
$f_0$ vanishes outside $M_0$ (as pointed out earlier), as well as  
\eqref{MAR-A.13} and \eqref{MAR-A.2}, we may then estimate  
\begin{align}\label{MAR-A.14}
& |f_0(x)-(P_L f_0)(x)|
=\bigg|\sum_{j=1}^Nf_0(x)\chi_{{}_{E_j}}(x)-\bigg(\mu(E_j)^{-1}\int_{E_j}f_0\,d\mu\bigg)
\chi_{{}_{E_j}}(x)\bigg|       \no \\
& \quad 
\leq\sum_{j=1}^N\bigg|\bigg(f_0(x)-\mu(E_j)^{-1}\int_{E_j}f_0\,d\mu\bigg)
\chi_{{}_{E_j}}(x)\bigg|\leq 2^{-n},
\end{align} 
for $\mu$-a.e. $x\in M$. Since both $f_0$ and $P_L f_0$ vanish identically 
outside $M_0$, we deduce from \eqref{MAR-A.14} that  
\begin{equation}\label{MAR-A.14a}
\|f_0-P_L f_0\|_M \leq 2^{-n}\mu(M_0)^{1/2}.
\end{equation} 
Consequently,  
\begin{align}\label{MAR-A.15}
\|f-P_L f\|_M & \leq  
\|f-f_0\|_M + \|f_0-P_L f_0\|_M    \no \\
& \quad + \|P_L(f_0-f)\|_M \no \\
&\leq  (\varepsilon/2) + 2^{-n}\mu(M_0)^{1/2},
\end{align} 
by \eqref{MAR-A.7BIS} (used twice) and \eqref{MAR-A.14a}. 
Hence, if the integer $n\in(\lambda_0,\infty)$ is chosen large enough so that
$2^{-n}\mu(M_0)^{1/2}\leq\varepsilon/2$ to begin with (recall that 
$\mu(M_0)<\infty$), it follows from \eqref{MAR-A.15} that 
$\|f-P_L f\|_M \leq \varepsilon$. In turn, 
from this and \eqref{MAR-A.4} we conclude that 
\begin{equation}\label{MAR-A.4a} 
\|f-P_{L'}f\|_M \leq\varepsilon
\end{equation} 
for every $L'\in{\mathfrak{L}}(M,\cM,\mu)$ such that $L\subseteq L'$. 
This finishes the proof of \eqref{MAR-A.5}.

Next, one notes that the strong convergence in \eqref{MAR-A.5} implies  
\begin{align}
{\tr}_{L^2(M; d\mu)} (A) = \sup_{L} {\tr}_{L^2(M; d\mu)} (P_L A P_L) \geq 
\sup_{L} {\tr}_{L^2(M; d\mu)} (P_L B P_L).     \lb{2.24a} 
\end{align}
Here we used $A \geq 0$, the monotonicity of ${\tr}_{L^2(M; d\mu)} (P_L A P_L)$ with respect to $L$, 
and (a special case of) the following Lemma \ref{l2.8} (with $S_n = S = A$, $R_n = T_n^* = P_L$) in 
the first equality in \eqref{2.24a} and 
\begin{equation}
\sum_{j=1}^J (\chi_{{}_{M_j}}, A \chi_{{}_{M_j}})_M \geq 
\sum_{j=1}^J (\chi_{{}_{M_j}}, B \chi_{{}_{M_j}})_M
\end{equation}
as a consequence of $A \succcurlyeq B \succcurlyeq 0$ in the last inequality in \eqref{2.24a}. 
The non-commutative Fatou lemma, \cite[Theorem\ 2.7\,(d)]{Si05}, and \eqref{2.24a} then imply  
$B \in \cB_1(L^2(M; d\mu))$, and 
\begin{equation} 
\sup_{J\in\bbN} {\tr}_{L^2(M; d\mu)} (P_J B P_J) 
\geq \|B\|_{\cB_1(L^2(M; d\mu))} = {\tr}_{L^2(M; d\mu)} (B),    \lb{2.24b}
\end{equation} 
 where we used $B \geq 0$ in the last equality in \eqref{2.24b}, proving \eqref{2.24}.    
 \end{proof}

\begin{lemma}\lb{l2.8}
Let $p\in[1,\infty)$ and assume that $R,R_n,T,T_n\in\cB(\cH)$, 
$n\in\bbN$, satisfy
$\slim_{n\to\infty}R_n = R$  and $\slim_{n\to \infty}T_n = T$ and that
$S,S_n\in\cB_p(\cH)$, $n\in\bbN$, satisfy 
$\lim_{n\to\infty}\|S_n-S\|_{\cB_p(\cH)}=0$.
Then $\lim_{n\to\infty}\|R_n S_n T_n^\ast - R S T^\ast\|_{\cB_p(\cH)}=0$.
\end{lemma}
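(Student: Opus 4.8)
The plan is to deduce the statement from the Schatten ideal estimate $\norm{AKB}_{\cB_p(\cH)}\le\norm{A}_{\cB(\cH)}\,\norm{K}_{\cB_p(\cH)}\,\norm{B}_{\cB(\cH)}$ (valid for $K\in\cB_p(\cH)$, $A,B\in\cB(\cH)$), together with a telescoping decomposition and one auxiliary lemma. First I would record that the strong limits force uniform bounds: for each $x\in\cH$ the sequence $\{R_nx\}_{n\in\bbN}$ converges, hence is bounded, so by the uniform boundedness principle $M_R:=\sup_{n\in\bbN}\norm{R_n}_{\cB(\cH)}<\infty$, and likewise $M_T:=\sup_{n\in\bbN}\norm{T_n}_{\cB(\cH)}<\infty$. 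In particular $R_n-R$ and $T_n-T$ are uniformly bounded sequences in $\cB(\cH)$ that converge strongly to $0$.

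The auxiliary lemma I would isolate is: \emph{if $\{W_n\}_{n\in\bbN}\subset\cB(\cH)$ is uniformly bounded with $\slim_{n\to\infty}W_n=0$, then $\lim_{n\to\infty}\norm{W_nK}_{\cB_p(\cH)}=0$ for every $K\in\cB_p(\cH)$.} This is proved by finite-rank approximation: given $\e>0$, since finite-rank operators are dense in $\cB_p(\cH)$ (here $p<\infty$ is essential), pick a finite-rank $F$, say $Ff=\sum_{k=1}^N(\phi_k,f)_\cH\,\psi_k$, with $\norm{K-F}_{\cB_p(\cH)}\le\e$. Then $\norm{W_n(K-F)}_{\cB_p(\cH)}\le\big(\sup_j\norm{W_j}_{\cB(\cH)}\big)\e$ by the ideal estimate, while $W_nF\,f=\sum_{k=1}^N(\phi_k,f)_\cH\,W_n\psi_k$ gives $\norm{W_nF}_{\cB_p(\cH)}\le\sum_{k=1}^N\norm{\phi_k}_\cH\,\norm{W_n\psi_k}_\cH\to0$ as $n\to\infty$ (a finite sum, with $W_n\psi_k\to0$ in $\cH$). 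Hence $\limsup_{n\to\infty}\norm{W_nK}_{\cB_p(\cH)}\le\big(\sup_j\norm{W_j}_{\cB(\cH)}\big)\e$, and letting $\e\downarrow0$ finishes this step.

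The main estimate then comes from
\[
R_nS_nT_n^\ast-RST^\ast=R_n(S_n-S)T_n^\ast+(R_n-R)\,S\,T_n^\ast+R\,S\,(T_n^\ast-T^\ast).
\]
For the first summand the ideal estimate gives $\norm{R_n(S_n-S)T_n^\ast}_{\cB_p(\cH)}\le M_RM_T\,\norm{S_n-S}_{\cB_p(\cH)}\to0$ by hypothesis. For the second, $\norm{(R_n-R)ST_n^\ast}_{\cB_p(\cH)}\le M_T\,\norm{(R_n-R)S}_{\cB_p(\cH)}\to0$ by the auxiliary lemma with $W_n=R_n-R$ and $K=S$. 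For the third, using that taking adjoints is an isometry of $\cB_p(\cH)$ (so $S^\ast\in\cB_p(\cH)$ with $\norm{S^\ast}_{\cB_p(\cH)}=\norm{S}_{\cB_p(\cH)}$), one has $\norm{R\,S\,(T_n^\ast-T^\ast)}_{\cB_p(\cH)}=\norm{(T_n-T)\,S^\ast\,R^\ast}_{\cB_p(\cH)}\le\norm{R}_{\cB(\cH)}\,\norm{(T_n-T)S^\ast}_{\cB_p(\cH)}\to0$, again by the auxiliary lemma, now with $W_n=T_n-T$ and $K=S^\ast$. Adding the three bounds yields $\norm{R_nS_nT_n^\ast-RST^\ast}_{\cB_p(\cH)}\to0$.

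The only genuinely non-routine point is the auxiliary lemma: strong convergence $W_n\to0$ does \emph{not} imply $\norm{W_n}_{\cB(\cH)}\to0$, but it does force $\norm{W_nK}_{\cB_p(\cH)}\to0$ for a fixed Schatten-class $K$, and this is exactly where finite-rank density — hence $p<\infty$ — enters (the triangle inequality in $\cB_p(\cH)$ uses $p\ge1$, so the full hypothesis $p\in[1,\infty)$ is accounted for). I would also flag that the adjoint manipulation in the third summand is unavoidable, since strong convergence is not preserved under adjoints and $T_n^\ast\to T^\ast$ strongly need not hold.
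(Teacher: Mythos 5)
Your proof is correct, and it reconstructs exactly the argument the paper is invoking: the paper does not give a proof but instead cites Gr\"umm's lemma (and equivalents in Simon's and Yafaev's books) together with the remark ``with a minor additional effort (taking adjoints, etc.)''; your auxiliary lemma is precisely the content of Gr\"umm's lemma (finite-rank density in $\cB_p$ plus uniform boundedness via Banach--Steinhaus), and your telescoping decomposition and adjoint manipulation in the third summand supply the ``minor additional effort'' the paper alludes to. One small quibble: the parenthetical ``here $p<\infty$ is essential'' overstates the role of the hypothesis, since finite-rank operators are also dense in $\cB_\infty(\cH)$ (the compact operators in the paper's notation) and the density argument would go through there as well; the restriction $p\in[1,\infty)$ in the statement is for how the lemma is used, not because the approximation step fails at $p=\infty$.
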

This follows, for instance, from \cite[Theorem 1]{Gr73}, \cite[p.\ 28--29]{Si05}, or
\cite[Lemma 6.1.3]{Ya92} with a minor additional effort (taking adjoints, etc.). 

We emphasize that items $(i)$--$(iii)$ in Lemma \ref{l2.7} are not optimal. Indeed,
one easily verifies that in analogy to \eqref{2.4}, under the assumptions that 
$A, B \in \cB\big(L^2(M; d\mu)\big)$ and $A \succcurlyeq 0$, $B \succcurlyeq 0$, one concludes that 
\begin{equation}
\text{$0 \preccurlyeq B \preccurlyeq A$ if and only if $|B f| \leq A |f|$, $f \in L^2(M; d\mu)$.} 
\lb{2.24c}
\end{equation}
On the other hand, the condition $|B f| \leq A |f|$, $f \in L^2(M; d\mu)$, in \eqref{2.24c} remains 
meaningful in the more general situation where $B$ is no longer assumed to satisfy $B \succcurlyeq 0$. 
In fact, this leads to the following notion of pointwise domination:

\begin{definition} \lb{d2.7}
Assume $A, B \in \cB\big(L^2(M; d\mu)\big)$ and $A \succcurlyeq 0$. Then $A$ is said to 
{\it pointwise dominate} $B$ if 
\begin{equation}
|B f| \leq A |f|, \quad f \in L^2(M; d\mu).    \lb{2.24d}
\end{equation}
\end{definition}

One then has the following improvement of Lemma \ref{l2.7}\,$(ii)$,\,$(iii)$ due to 
Dodds and Fremlin \cite{DF79}, Pitt \cite{Pi79}, and Simon \cite[Theorem\ 2.13]{Si05}:

\begin{theorem} \lb{t2.6}
Assume $A, B \in \cB\big(L^2(M; d\mu)\big)$, that $A \succcurlyeq 0$, and that $A$ pointwise 
dominates $B$. Then the subsequent assertions hold: \\
$(i)$ The following norm bound is valid, 
\begin{equation}
\|A\|_{\cB(L^2(M; d\mu))} \geq \|B\|_{\cB(L^2(M; d\mu))}.    \lb{2.24da}
\end{equation} 
$(ii)$ Suppose in addition that $A \in \cB_{2n}\big(L^2(M; d\mu)\big)$ for some $n \in \bbN$. Then 
also $B \in \cB_{2n}\big(L^2(M; d\mu)\big)$ and 
\begin{equation}
\|A\|_{\cB_{2n}(L^2(M; d\mu))} \geq \|B\|_{\cB_{2n}(L^2(M; d\mu))}.    \lb{2.24e} 
\end{equation} 
$(iii)$ Suppose in addition that $A \in \cB_{\infty}\big(L^2(M; d\mu)\big)$. Then 
also $B \in \cB_{\infty}\big(L^2(M; d\mu)\big)$. 
\end{theorem}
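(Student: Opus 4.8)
The plan is to dispatch the three assertions in order of increasing depth; the substantive work lies in $(ii)$, and the only genuine obstacle is $(iii)$. Assertion $(i)$ is immediate: since the norm on $L^2(M;d\mu)$ depends only on the modulus, the pointwise bound $|Bf|\le A|f|$ between nonnegative functions together with $A\succcurlyeq 0$ yields $\|Bf\|_M=\||Bf|\|_M\le\|A|f|\|_M\le\|A\|_{\cB(L^2(M;d\mu))}\|f\|_M$ for every $f\in L^2(M;d\mu)$, which is \eqref{2.24da}.

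For $(ii)$, the key point is that pointwise domination, with the dominating operator positivity preserving, is stable under taking adjoints and under composition. Stability under adjoints: for all $f,g\in L^2(M;d\mu)$ one has $|(B^*g,f)_M|=|(g,Bf)_M|\le(|g|,|Bf|)_M\le(|g|,A|f|)_M=(A^*|g|,|f|)_M$, using $A^*\succcurlyeq 0$ (cf.\ \eqref{2.21A}), so that $A^*|g|\ge 0$; testing this inequality against $f=\sgn(B^*g)\,\chi_{{}_E}$ (with $\sgn$ the pointwise phase function) over an exhausting family of sets $E$ of finite $\mu$-measure --- here $\sigma$-finiteness of $(M,\cM,\mu)$ enters --- yields $|B^*g|\le A^*|g|$ $\mu$-a.e., i.e., $A^*$ pointwise dominates $B^*$. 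Stability under composition: if $A_i\succcurlyeq 0$ pointwise dominates $B_i$, $i=1,2$, then $|B_1B_2f|\le A_1|B_2f|\le A_1A_2|f|$, so $A_1A_2\succcurlyeq 0$ pointwise dominates $B_1B_2$. Iterating, $(A^*A)^n\succcurlyeq 0$ is a nonnegative operator that pointwise dominates the nonnegative operator $(B^*B)^n$. Recalling (as in the proof of Lemma \ref{l2.7}\,$(ii)$) that $A\in\cB_{2n}(L^2(M;d\mu))$ is equivalent to $(A^*A)^n\in\cB_1(L^2(M;d\mu))$ with $\|A\|_{\cB_{2n}(L^2(M;d\mu))}^{2n}={\tr}_{L^2(M;d\mu)}\big((A^*A)^n\big)$, it remains to prove: if $P,Q\ge 0$ with $P$ positivity preserving, $P$ pointwise dominating $Q$, and $P\in\cB_1(L^2(M;d\mu))$, then $Q\in\cB_1(L^2(M;d\mu))$ and ${\tr}_{L^2(M;d\mu)}(Q)\le{\tr}_{L^2(M;d\mu)}(P)$. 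For this I reuse the directed family of finite-rank projections $\{P_L\}_{L\in{\mathfrak{L}}(M,\cM,\mu)}$ onto spans of normalized characteristic functions $\widetilde{\chi}_{E_j}$ constructed in the proof of Lemma \ref{l2.7}\,$(iv)$, which satisfies $\slim_L P_L=I_M$. For each $L=L(E_1,\dots,E_N)$ one has ${\tr}_{L^2(M;d\mu)}(P_LQP_L)=\sum_{j=1}^N(\widetilde{\chi}_{E_j},Q\widetilde{\chi}_{E_j})_M\ge 0$ (since $Q\ge 0$), and bounding each summand, which is nonnegative, by $\int_M\widetilde{\chi}_{E_j}|Q\widetilde{\chi}_{E_j}|\,d\mu\le\int_M\widetilde{\chi}_{E_j}(P\widetilde{\chi}_{E_j})\,d\mu=(\widetilde{\chi}_{E_j},P\widetilde{\chi}_{E_j})_M$ via pointwise domination gives ${\tr}_{L^2(M;d\mu)}(P_LQP_L)\le{\tr}_{L^2(M;d\mu)}(P_LPP_L)\le{\tr}_{L^2(M;d\mu)}(P)$. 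The non-commutative Fatou lemma \cite[Theorem\ 2.7\,(d)]{Si05}, combined with $\slim_L P_L=I_M$, then shows $Q\in\cB_1(L^2(M;d\mu))$ and ${\tr}_{L^2(M;d\mu)}(Q)=\sup_L{\tr}_{L^2(M;d\mu)}(P_LQP_L)\le{\tr}_{L^2(M;d\mu)}(P)$; unwinding the reduction yields \eqref{2.24e}.

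For $(iii)$ it suffices to show $B^*B$ compact, since then $|B|=(B^*B)^{1/2}$ is compact and hence so is $B$ by the polar decomposition; by the composition stability above, $A^*A\succcurlyeq 0$ is compact and pointwise dominates the nonnegative operator $B^*B$, so one is reduced to the assertion that a nonnegative operator pointwise dominated by a positivity preserving compact operator on $L^2(M;d\mu)$ is itself compact, which is precisely the Dodds--Fremlin--Pitt theorem (\cite{DF79}, \cite{Pi79}, \cite[Theorem\ 2.13]{Si05}) that I invoke directly. This is the main obstacle: unlike $(i)$ and $(ii)$, it admits no short self-contained proof, since the natural truncations $I_M-P_L$ fail to be positivity preserving and the composition trick used in $(ii)$ is unavailable; one genuinely needs the lattice-theoretic machinery exploiting the order continuity of the norm on $L^2(M;d\mu)$. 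Within $(ii)$, the only slightly delicate step is the adjoint stability, i.e., passing from the scalar inequality $|(B^*g,f)_M|\le(A^*|g|,|f|)_M$, valid for all $f$, to the pointwise bound $|B^*g|\le A^*|g|$.
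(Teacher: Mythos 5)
Your proof of $(i)$ coincides with the paper's, which makes the same one-line observation
$\|Bf\|_M\le\big\|A|f|\big\|_M\le\|A\|_{\cB(L^2(M;d\mu))}\|f\|_M$ immediately after the theorem statement.

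For $(ii)$ and $(iii)$ the paper offers no argument: it cites \cite[Theorem\ 2.13]{Si05} for $(ii)$, and \cite{DF79}, \cite{Pi79} (see also \cite{Le82}) for $(iii)$. Your proposal genuinely departs from the paper on $(ii)$ by supplying a self-contained proof. You establish adjoint- and composition-stability of pointwise domination; the adjoint step is correct (from $|(B^*g,f)_M|\le(A^*|g|,|f|)_M$ for all $f\in L^2(M;d\mu)$, test against $f=\sgn(B^*g)\chi_{{}_E}$ with $\mu(E)<\infty$, use $|f|\le\chi_{{}_E}$ and $A^*|g|\ge 0$, and exhaust $M$ by $\sigma$-finiteness to obtain $|B^*g|\le A^*|g|$ $\mu$-a.e.), as is the composition step (apply the positivity preserving $A_1$ to the a.e.\ inequality $|B_2f|\le A_2|f|$ to get $A_1|B_2f|\le A_1A_2|f|$). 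This yields $(A^*A)^n\succcurlyeq 0$ pointwise dominating $(B^*B)^n\ge 0$, and your trace comparison via the filtering family $\{P_L\}_{L\in\mathfrak{L}(M,\cM,\mu)}$ and the non-commutative Fatou lemma is exactly the mechanism used in the paper's proof of Lemma \ref{l2.7}\,$(iv)$; the termwise bound $(\widetilde{\chi}_{E_j},Q\widetilde{\chi}_{E_j})_M\le\int_M\widetilde{\chi}_{E_j}|Q\widetilde{\chi}_{E_j}|\,d\mu\le(\widetilde{\chi}_{E_j},P\widetilde{\chi}_{E_j})_M$ is sound. In effect you rerun the proof of Lemma \ref{l2.7}\,$(ii)$,\,$(iv)$ under the weaker hypothesis of pointwise domination, with the two stability lemmas bridging the gap; that buys a self-contained argument where the paper contents itself with a citation, and it makes explicit which structural features of pointwise domination are actually used. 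For $(iii)$ your conclusion and your remark that the truncations $I_M-P_L$ fail to be positivity preserving are both correct, but the reduction to $A^*A$ and $B^*B$ is an unnecessary detour: the Dodds--Fremlin--Pitt theorem, in the form relevant here, applies directly to any bounded $B$ pointwise dominated by a compact $A\succcurlyeq 0$ on $L^2(M;d\mu)$, with no positivity hypothesis on the dominated operator --- which is precisely the statement of $(iii)$. The spectral nonnegativity $B^*B\ge 0$ you invoke is not the lattice positivity appearing in \cite{DF79}, \cite{Pi79}, and $B^*B$ is in general not positivity preserving, so the squaring does not bring the pair within the ``$0\preccurlyeq S\preccurlyeq T$'' framework of those references either; the detour is harmless, but the direct citation, which is what the paper does, is simpler.
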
 

\noindent 
Since obviously, 
$\|Bf\|_M \leq \big\|A|f|\big\|_M \leq \|A\|_{\cB(L^2(M;d\mu))}\|f\|_M$, 
$f\in L^2(M;d\mu)$, this settles item $(i)$ in Theorem \ref{t2.6} (and hence also item $(i)$ in 
Lemma \ref{l2.7}). Item $(ii)$ is proved in \cite[Theorem\ 2.13]{Si05}, and $(iii)$ is due to \cite{DF79} and \cite{Pi79} (see also 
\cite{Le82}). The case of integral operators $A$ 
and $B$ was first treated in \cite[p.~94]{ZKKMRS75}. We emphasize that 
Theorem \ref{t2.6}\,$(ii)$ is wrong with $2n$, $n\in\bbN$, replaced by any $p < 2$ (in particular, it fails 
in the trace class case $p=1$), it is also not true for $p \in (2,\infty) \backslash \{2(n+1)\}_{n \in \bbN}$, 
due to counterexamples by Peller \cite{Pe82}, employing Hankel operators (see also the discussion in 
\cite[p.\  24, 128--129]{Si05}).  

\medskip

Next, we turn to positivity preserving contraction semigroups. First, we recall the following well-known result:

\begin{theorem} [\cite{RS78}, p.\ 204, 209] \lb{t2.7}
Suppose that $H$ is a semibounded, self-adjoint operator in $L^2(M; d\mu)$ with 
$\lambda_0 = \inf(\sigma(H))$. Then the following assertions $(i)$--$(iii)$ are 
equivalent: \\[1mm]
$(i)$ \, $e^{-t H}\succcurlyeq 0$ for all $t \geq 0$. \\[1mm] 
$(ii)$ $(H - \lambda I_M)^{-1} \succcurlyeq 0$ for all $\lambda < \lambda_0$. \\[1mm]
$(iii)$ $f\in \dom\big(|H|^{1/2}\big)$ implies $|f| \in \dom\big(|H|^{1/2}\big)$ and \\[1mm]
\hspace*{7mm} $\big\| (H - \lambda_0 I_M)^{1/2} |f|\big\|_M \leq \big\| (H - \lambda_0 I_M)^{1/2} f \big\|_M$.
\end{theorem}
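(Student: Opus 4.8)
The plan is to prove the cyclic chain of implications $(i)\Rightarrow(ii)\Rightarrow(iii)\Rightarrow(i)$, which is how this classical Beurling--Deny type criterion is traditionally established. Since the statement is attributed to \cite[pp.\ 204, 209]{RS78}, the proof should essentially reproduce the standard argument; I will sketch the essential mechanism of each arrow.

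\textbf{Step $(i)\Rightarrow(ii)$.} For $\lambda<\lambda_0$ one has the Laplace transform representation
\begin{equation}
(H-\lambda I_M)^{-1} = \int_0^\infty e^{\lambda t}\, e^{-tH}\, dt,
\end{equation}
where the integral converges in the strong (indeed norm) operator topology because $\|e^{-tH}\|\leq e^{-\lambda_0 t}$ and $\lambda<\lambda_0$. Applying this to $0\leq f\in L^2(M;d\mu)$ and using that $e^{-tH}f\geq 0$ $\mu$-a.e.\ for every $t\geq 0$, the integrand is nonnegative $\mu$-a.e.\ for a.e.\ $t$; since a strong limit (here, the Bochner/Riemann integral realized as a strong limit of Riemann sums) of nonnegative elements in $L^2(M;d\mu)$ is nonnegative — the cone $\{g\geq 0\}$ being closed in norm — one concludes $(H-\lambda I_M)^{-1}f\geq 0$ $\mu$-a.e., i.e.\ $(H-\lambda I_M)^{-1}\succcurlyeq 0$.

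\textbf{Step $(ii)\Rightarrow(i)$ (used to close the loop, or alternatively bundle with $(iii)$).} Conversely, for $t>0$ one recovers the semigroup from the resolvent via the exponential formula
\begin{equation}
e^{-tH} = \slim_{n\to\infty}\Big(I_M + \tfrac{t}{n}(H-\lambda_0 I_M)\Big)^{-n} e^{-t\lambda_0},
\end{equation}
or more simply $e^{-tH} = \slim_{n\to\infty}\big(\tfrac{n}{t}\big)^n\big(H + (\tfrac{n}{t}-\lambda_0)I_M\big)^{-n}$-type identities; each resolvent power is a product of positivity preserving operators (by \eqref{2.21A}, the class of positivity preserving operators is closed under multiplication), hence positivity preserving, and strong limits of positivity preserving operators are positivity preserving (again closedness of the cone). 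This gives $e^{-tH}\succcurlyeq 0$.

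\textbf{Step $(iii)\Leftrightarrow(ii)$, the form-level characterization.} This is the substantive part. Writing $\wti H = H-\lambda_0 I_M\geq 0$ and working with the associated closed quadratic form $\gq(f)=\|\wti H^{1/2}f\|_M^2$ with form domain $\dom(\wti H^{1/2})=\dom(|H|^{1/2})$, condition $(iii)$ says precisely that $f\in\dom(\gq)$ implies $|f|\in\dom(\gq)$ with $\gq(|f|)\leq\gq(f)$. The equivalence with $(ii)$ rests on the fact that $(H-\lambda I_M)^{-1}\succcurlyeq 0$ for all $\lambda<\lambda_0$ is equivalent, via \eqref{2.4} (applicable since these resolvents are bounded and everywhere defined), to the contraction-type inequality $\big|(H-\lambda I_M)^{-1}f\big|\leq (H-\lambda I_M)^{-1}|f|$, and then, as $\lambda\to-\infty$ after suitable rescaling, to a statement about the form. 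Concretely: for the direction $(ii)\Rightarrow(iii)$, one uses that $0\preccurlyeq(H-\lambda I_M)^{-1}$ implies the contractivity of $|\cdot|$ under the resolvent, from which one extracts that the approximating forms $\gq_\lambda(f) = \lambda\big((\lambda-1) (H - \lambda_0 I_M + (\lambda-\lambda_0)\cdots)\big)$— more cleanly, one shows $|f|\in\dom(\gq)$ and the inequality by a standard argument passing to the limit in $-\lambda\big(f - \lambda(H-\lambda_0 I_M-\lambda I_M)^{-1}f, f\big)_M$, which increases to $\gq(f)$. For $(iii)\Rightarrow(ii)$ one argues that the hypothesis on $|f|$ makes $\wti H^{1/2}$ "commute with taking modulus in the contractive sense", and then a direct computation with the variational characterization $\big((\wti H+\mu I_M)^{-1}g, g\big)_M = \min_{f}\big(\gq(f)+\mu\|f\|_M^2 - 2\Re(f,g)_M\big)$ (for $\mu>0$, $g\in L^2(M;d\mu)$) shows that replacing the minimizer $f$ by $|f|$ when $g\geq 0$ does not increase the functional, forcing the minimizer itself to be nonnegative, hence $(\wti H+\mu I_M)^{-1}g\geq 0$; translating back, $(H-\lambda I_M)^{-1}\succcurlyeq 0$ for $\lambda = \lambda_0-\mu<\lambda_0$.

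The main obstacle is the $(iii)\Rightarrow(ii)$ direction: one must handle the domain issue carefully (that $f\in\dom(\gq)$ genuinely implies $|f|\in\dom(\gq)$ is part of the hypothesis, but one still needs the real/imaginary and positive/negative part decompositions to stay in the form domain, which they do since $\dom(\gq)$ is a vector space and $f_\pm = (|f|\pm f)/2$), and one must justify the minimization identity for the form and the validity of substituting $|f|$ for the (real part of the) minimizer when $g$ is nonnegative. Once the variational identity is in place this is a short argument, but setting it up rigorously — in particular verifying that the closed form $\gq$ is real, i.e.\ that $f\in\dom(\gq)\Rightarrow\ol f\in\dom(\gq)$ with $\gq(\ol f)=\gq(f)$, which follows from self-adjointness of $H$ together with the hypothesis — is where the care is needed. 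Since this is a known theorem, I would simply cite \cite{RS78} for the full details and at most reproduce the Laplace/exponential-formula arguments for $(i)\Leftrightarrow(ii)$, which are the transparent ones, while indicating the form-minimization mechanism for the equivalence with $(iii)$.
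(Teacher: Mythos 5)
Your proposal matches the paper's treatment: after stating the theorem as a citation to \cite{RS78}, the paper remarks only that the equivalence of $(i)$ and $(ii)$ follows from the Laplace transform representation \eqref{2.25} and the exponential formula \eqref{2.26}, leaving the Beurling--Deny equivalence with $(iii)$ entirely to the cited reference. Your form-minimization sketch for the $(iii)$ direction is conceptually sound (though the display purporting to define $\gq_\lambda$ is malformed and should simply be deleted), and you rightly conclude by deferring the full argument for $(iii)$ to \cite{RS78}, which is exactly what the paper does.
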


The equivalence of items $(i)$ and $(ii)$ follows from the relations
\begin{align}
& (H - \lambda I_{M})^{-1} = \int_0^{\infty} dt \, e^{t \lambda} e ^{- t H}, 
\quad \lambda < \lambda_0,    \lb{2.25} \\
& \, e^{-t H} = e^{- t \lambda_0}
\slim_{n \to \infty} \big[I_M + (t/n) (H - \lambda_0 I_M)\big]^{-n}, \quad t \geq 0.  \lb{2.26}
\end{align} 

The next result recalls basic semigroup domination facts:
 
\begin{theorem} \lb{t2.8}
Suppose that $H_j$, $j=1,2$, are semibounded, self-adjoint operators in 
$L^2(M; d\mu)$ with $\lambda_0 = \inf(\sigma(H_1), \sigma(H_2))$. Moreover, assume 
that $H_j$ generate positivity preserving semigroups, $\exp(-t H_j) \succcurlyeq 0$, 
$j=1,2$. Then the following assertions $(i)$--$(iii)$ are equivalent: \\[1mm]
$(i)$ \,\, $\exp(-t H_1) \succcurlyeq \exp(-t H_2) \succcurlyeq 0$ for all $t \geq 0$. \\[1mm]
$(ii)$ \, $(H_1 - \lambda I_M)^{-1} \succcurlyeq 
(H_2 - \lambda I_M)^{-1} \succcurlyeq 0$ for 
all $\lambda < \lambda_0$. \\[1mm]
$(iii)$ For all $f_j \in \dom(H_j)_+$, $j=1,2$, $(f_1,H_2 f_2)_M \geq (H_1 f_1, f_2)_M$. 
\\[1mm]
Suppose in addition that the form domains of $H_1$ and $H_2$ coincide, 
$\dom\big(|H_1|^{1/2}\big) = \dom\big(|H_2|^{1/2}\big)$. Then items 
$(i)$--$(iii)$ are further equivalent to \\[1mm]
$(iv)$ For all $f_j \in \dom\big(|H_1|^{1/2}\big)_+ = \dom\big(|H_2|^{1/2}\big)_+$, 
$j=1,2$, \\[1mm] 
$\big((H_2-\lambda_0 I_M)^{1/2} f_1,(H_2-\lambda_0 I_M)^{1/2} f_2\big)_M \geq 
\big((H_1-\lambda_0 I_M)^{1/2} f_1, (H_1-\lambda_0 I_M)^{1/2} f_2\big)_M$. 
\end{theorem}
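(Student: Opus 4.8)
The plan is to establish the cycle $(i)\Leftrightarrow(ii)\Leftrightarrow(iii)$ and then, under the extra hypothesis, to insert $(iv)$ through the chain $(ii)\Rightarrow(iv)\Rightarrow(iii)$. The only tools needed are already at hand: the matrix-element criterion \eqref{2.5} for positivity preserving operators; the cone properties recorded in and around \eqref{2.21A} (a product of positivity preserving operators is positivity preserving, as is any sum or positive-scalar multiple, and the cone is closed under strong operator limits); and the integral and exponential representations \eqref{2.25}, \eqref{2.26} already invoked for Theorem \ref{t2.7}. Since both $H_j$ generate positivity preserving semigroups, Theorem \ref{t2.7} yields $R_j(\lambda):=(H_j-\lambda I_M)^{-1}\succcurlyeq 0$ for all $\lambda<\lambda_0$, so in $(i)$ and $(ii)$ only the domination between the two objects carries real content. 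Throughout I abbreviate $\widetilde H_j:=H_j-\lambda_0 I_M\geq 0$.

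For $(i)\Rightarrow(ii)$ I would, for $\lambda<\lambda_0$, represent $R_j(\lambda)=\int_0^\infty e^{t\lambda}e^{-tH_j}\,dt$ via \eqref{2.25}, subtract, and pair with nonnegative $f,g\in L^2(M;d\mu)$; each integrand is nonnegative by \eqref{2.5} applied to $e^{-tH_1}-e^{-tH_2}\succcurlyeq 0$, so \eqref{2.5} returns $R_1(\lambda)\succcurlyeq R_2(\lambda)\succcurlyeq 0$. For $(ii)\Rightarrow(i)$ I would use \eqref{2.26}: since $[I_M+(t/n)\widetilde H_j]^{-1}=(n/t)R_j(\lambda_0-n/t)$ with $\lambda_0-n/t<\lambda_0$, hypothesis $(ii)$ gives $[I_M+(t/n)\widetilde H_1]^{-1}\succcurlyeq[I_M+(t/n)\widetilde H_2]^{-1}\succcurlyeq 0$; the elementary implication $A\succcurlyeq B\succcurlyeq 0\Rightarrow A^n\succcurlyeq B^n\succcurlyeq 0$ (from $A^2-B^2=A(A-B)+(A-B)B$ and induction, using \eqref{2.21A}) upgrades this to the $n$-th powers, and passing to the strong limit and multiplying by the positive scalar $e^{-\lambda_0 t}$ yields $(i)$.

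For $(iii)\Rightarrow(ii)$, fix $\lambda<\lambda_0$ and nonnegative $g,h\in L^2(M;d\mu)$, and set $f_2:=R_2(\lambda)g\in\dom(H_2)_+$ and $f_1:=R_1(\lambda)h\in\dom(H_1)_+$ (nonnegativity from $R_j(\lambda)\succcurlyeq 0$). Then $H_jf_j=\lambda f_j+(\text{the respective source})$, so the inequality in $(iii)$ collapses, after cancelling the common $\lambda(f_1,f_2)_M$ terms, to $(R_1(\lambda)h,g)_M\geq(h,R_2(\lambda)g)_M$; self-adjointness of $R_1(\lambda)$ together with \eqref{2.5} then give $R_1(\lambda)-R_2(\lambda)\succcurlyeq 0$, i.e.\ $(ii)$. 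For $(ii)\Rightarrow(iii)$, introduce the Yosida approximants $\widetilde H_j^{(n)}:=n\widetilde H_j(\widetilde H_j+nI_M)^{-1}=nI_M-n^2R_j(\lambda_0-n)\in\cB(L^2(M;d\mu))$; as $\lambda_0-n<\lambda_0$, $(ii)$ forces $\widetilde H_2^{(n)}-\widetilde H_1^{(n)}=n^2(R_1(\lambda_0-n)-R_2(\lambda_0-n))\succcurlyeq 0$. For $f_1\in\dom(H_1)_+$ and $f_2\in\dom(H_2)_+$, \eqref{2.5} gives $(f_1,\widetilde H_2^{(n)}f_2)_M\geq(f_1,\widetilde H_1^{(n)}f_2)_M=(\widetilde H_1^{(n)}f_1,f_2)_M$; transferring the bounded self-adjoint $\widetilde H_1^{(n)}$ onto $f_1$ before letting $n\to\infty$ is the point, since then the left side converges to $(f_1,\widetilde H_2f_2)_M$ ($f_2\in\dom(\widetilde H_2)$) and the right side to $(\widetilde H_1f_1,f_2)_M$ ($f_1\in\dom(\widetilde H_1)$), and undoing the shift by $\lambda_0$ yields $(iii)$.

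Finally, under $\dom(|H_1|^{1/2})=\dom(|H_2|^{1/2})$ let $\cQ$ denote this common form domain, which coincides with $\dom(\widetilde H_1^{1/2})=\dom(\widetilde H_2^{1/2})$. Writing $\widetilde H_j^{(n)}=\phi_n(\widetilde H_j)$ with $\phi_n(s)=ns/(s+n)\geq 0$, one has $(f_1,\widetilde H_j^{(n)}f_2)_M=(\phi_n(\widetilde H_j)^{1/2}f_1,\phi_n(\widetilde H_j)^{1/2}f_2)_M$, and since $\phi_n(s)^{1/2}\uparrow s^{1/2}$ with $|\phi_n(s)^{1/2}-s^{1/2}|^2\leq s$, the spectral theorem and dominated convergence give $\phi_n(\widetilde H_j)^{1/2}f\to\widetilde H_j^{1/2}f$ in $L^2(M;d\mu)$ for every $f\in\cQ$. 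Hence the inequality $(f_1,\widetilde H_2^{(n)}f_2)_M\geq(\widetilde H_1^{(n)}f_1,f_2)_M$, now valid for all $f_1,f_2\in\cQ_+$ by the argument of the previous paragraph, passes to the limit to give $(\widetilde H_2^{1/2}f_1,\widetilde H_2^{1/2}f_2)_M\geq(\widetilde H_1^{1/2}f_1,\widetilde H_1^{1/2}f_2)_M$, i.e.\ $(iv)$; conversely $(iv)\Rightarrow(iii)$ is immediate on restricting to $f_2\in\dom(H_2)_+$, $f_1\in\dom(H_1)_+$ and identifying $(\widetilde H_j^{1/2}f_1,\widetilde H_j^{1/2}f_2)_M$ with $(f_1,\widetilde H_jf_2)_M$ (for $j=2$) and with $(\widetilde H_jf_1,f_2)_M$ (for $j=1$). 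I expect the main obstacle to be purely technical: the domain bookkeeping in the two Yosida limits — always transferring the bounded approximants onto whichever argument lies in the relevant operator (resp.\ form) domain of the operator being approximated — together with the dominated-convergence step controlling the square roots; the positivity-preserving algebra itself is routine given \eqref{2.5} and \eqref{2.21A}.
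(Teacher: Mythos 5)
Your proposal is correct and establishes all four equivalences, but it closes the loop along a genuinely different path from the paper's. The $(i)\Leftrightarrow(ii)$ step (via \eqref{2.25}, \eqref{2.26}) and the $(iii)\Rightarrow(ii)$ step (substituting $f_j = R_j(\lambda)[\cdot]$ into $(iii)$ and using $H_j R_j(\lambda) = I_M + \lambda R_j(\lambda)$) match the paper's argument essentially verbatim. The divergence is in how the domination hypothesis produces $(iii)$ and $(iv)$: you go $(ii)\Rightarrow(iii)$ and $(ii)\Rightarrow(iv)$ via the bounded Yosida approximants $\widetilde H_j^{(n)} = nI_M - n^2 R_j(\lambda_0 - n)$ of the shifted generators, then pass to the limit $n\to\infty$ (and, for $(iv)$, use $\phi_n(\widetilde H_j)^{1/2}f\to\widetilde H_j^{1/2}f$ by dominated convergence on the spectral integral). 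The paper instead goes $(i)\Rightarrow(iii)$ and $(i)\Rightarrow(iv)$ by differentiating the domination inequality for $e^{-tH_j}$ at $t=0$: the quotient $t^{-1}(f_1,[I_M - e^{-tH_j}]f_2)_M$ converges to the operator pairing when $f_2\in\dom(H_j)$ (by strong differentiability of the semigroup, \eqref{2.33}--\eqref{2.34}) and to the form pairing when $f_j$ lie only in the form domain (by \eqref{2.35}--\eqref{2.37}). These two routes are standard, parallel realizations of the same idea — approximating the unbounded generator from within $\cB(\cH)$, either by resolvents or by difference quotients of the semigroup — and they entail identical domain bookkeeping. The paper's route is leaner here because \eqref{2.33}--\eqref{2.37} are recorded immediately after the proof for exactly this purpose; your route is slightly more self-contained, relying only on the spectral theorem and the elementary monotone convergence $\phi_n(s)^{1/2}\uparrow s^{1/2}$.
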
 
\begin{proof}
The equivalence of items $(i)$ and $(ii)$ again follows from the relations
\eqref{2.25}, \eqref{2.26}. 

In order to prove that item $(i)$ implies item $(iii)$ suppose that $f_j \in \dom(H_j)_+$, $j=1,2$, then item $(i)$ implies 
$\big(f_1, e^{- t H_1} f_2\big)_M \geq \big(f_1, e^{- t H_2} f_2\big)_M$ and hence 
\begin{equation}
\big(f_1, t^{-1}\big[I_M - e^{- t H_2}\big] f_2\big)_M \geq 
\big(f_1, t^{-1}\big[I_M - e^{- t H_1}\big] f_2\big)_M.     \lb{2.27}
\end{equation}
Letting $t \downarrow 0$ in \eqref{2.27} yields item $(iii)$ as $e^{-t H_j}$, $j=1,2$, are strongly differentiable with respect to $t \geq 0$. \\
To show that item $(iii)$ implies item $(ii)$ one can argue as follows: Let 
$f_j \in L^2(M; d\mu)_+$, $j=1,2$, then by Theorem \ref{t2.7}, 
$(H_j - \lambda I_M)^{-1} f_j \in \dom(H_j)_+$, $\lambda < \lambda_0$, $j=1,2$, and 
hence by item $(iii)$,
\begin{equation}
\big((H_1 - \lambda I_M)^{-1} f_1, H_2 (H_2 - \lambda I_M)^{-1} f_2\big)_M \geq 
\big(H_1 (H_1 - \lambda I_M)^{-1} f_1, (H_2 - \lambda I_M)^{-1} f_2\big)_M.   \lb{2.28}
\end{equation}
Using $H(H - \lambda I_M)^{-1} = I_M + \lambda (H - \lambda I_M)^{-1}$ on either 
side of \eqref{2.28} yields  
\begin{equation}
\big(f_1, (H_1 - \lambda I_M)^{-1} f_2\big)_M \geq 
\big(f_1, (H_2 - \lambda I_M)^{-1} f_2\big)_M.  
\end{equation}
and hence yields item $(ii)$. \\
Next suppose that $0 \leq f_j \in \dom\big(|H_2|^{1/2}\big)$, $j=1,2$, then item $(i)$ once again implies 
\begin{equation}
\big(f_1, t^{-1}\big[I_M - e^{- t (H_2-\lambda_0 I_M)}\big] f_2\big)_M \geq 
\big(f_1, t^{-1}\big[I_M - e^{- t (H_1-\lambda_0 I_M)}\big] f_2\big)_M.      \lb{2.30}
\end{equation} 
Letting $t\downarrow 0$ in \eqref{2.30} then implies the sesquilinear form 
version of $(iii)$, that is, 
\begin{equation} 
\big((H_2-\lambda_0 I_M)^{1/2} f_1, (H_2-\lambda_0 I_M)^{1/2} f_2\big)_M \geq 
\big((H_1-\lambda_0 I_M)^{1/2} f_1, (H_1-\lambda_0 I_M)^{1/2} f_2\big)_M, 
\end{equation} 
and hence item $(iv)$. \\
Finally, let $f_j \in \dom(H_j)_+$, then $f_j \in \dom\big(|H_2|^{1/2}\big)_+$, $j=1,2$, and 
item $(iv)$ implies 
\begin{align}
\begin{split} 
& (f_1, (H_2-\lambda_0 I_M) f_2)_M = \big((H_2-\lambda_0 I_M)^{1/2} f_1, (H_2-\lambda_0 I_M)^{1/2} f_2\big)_M   \\
& \quad \geq 
\big((H_1-\lambda_0 I_M)^{1/2} f_1, (H_1-\lambda_0 I_M)^{1/2} f_2\big)_M 
= \big((H_1-\lambda_0 I_M) f_1, f_2\big)_M,
\end{split} 
\end{align}
implying item $(iii)$.
\end{proof}

In the proof of Theorem \ref{t2.8} we used the known fact that for any semibounded, 
self-adjoint operator $H$ in a complex separable Hilbert space $\cH$ the domain and 
form domain of $H$ can be characterized in terms of its semigroup $e^{-t H}$, 
$t\geq 0$, by
\begin{equation}
\dom(H) = \Big\{g \in \cH \, \Big| \, \lim_{t \downarrow 0} 
t^{-1} \big\| \big[I_{\cH} - e^{-t H}\big] g \big\|_{\cH} \text{ exists finitely}\Big\},   \lb{2.33}
\end{equation}
in particular,
\begin{equation}
 \lim_{t \downarrow 0} t^{-1} \big[I_{\cH} - e^{-t H}\big] f = H f, \quad f \in \dom(H). 
 \lb{2.34}
\end{equation}
In addition, 
\begin{equation}
\dom\big(|H|^{1/2}\big) = \Big\{g \in \cH \, \Big| \, \lim_{t \downarrow 0} 
t^{-1} \big(g, \big[I_{\cH} - e^{-t H}\big] g \big)_{\cH} \text{ exists finitely}\Big\},   \lb{2.35}
\end{equation}  
and hence, 
\begin{equation}
\lim_{t \downarrow 0} 
t^{-1} \big(f, \big[I_{\cH} - e^{-t H}\big] f \big)_{\cH} 
= \big(|H|^{1/2}f, \sgn (H) |H|^{1/2} f\big)_{\cH}, \quad f \in \dom\big(|H|^{1/2}\big), 
\lb{2.36}
\end{equation}
and by polarization,
\begin{equation}
\lim_{t \downarrow 0} 
t^{-1} \big(f, \big[I_{\cH} - e^{-t H}\big] g \big)_{\cH} 
= \big(|H|^{1/2}f, \sgn (H) |H|^{1/2} g\big)_{\cH}, \quad f, g \in \dom\big(|H|^{1/2}\big).  
\lb{2.37} 
\end{equation}
The results \eqref{2.33}--\eqref{2.36} follow from a combination of the spectral theorem 
for self-adjoint operators and Lebesgue's Dominated Convergence Theorem. (In fact, 
\eqref{2.33}--\eqref{2.37} also extend to unitary groups $e^{- i t H}$, but in this case the proof of the analogs of \eqref{2.35}, \eqref{2.36} is more subtle and involves certain 
results on characteristic functions and moments of probability measures, see, e.g., 
\cite[Satz\ 2.1]{GR71}.) 

Theorem \ref{t2.8}\,$(i)$--$(iii)$ is due to \cite{BKR80} and for convenience of the reader 
we quickly reproduced the arguments in \cite{BKR80}. The elementary quadratic form 
part $(iv)$ in Theorem \ref{t2.8} is a special case of a general result in connection with m-accretive 
generators of semigroups and their associated forms due to Ouhabaz \cite{Ou96}, 
\cite[Theorem\ 2.24]{Ou05}, based on the concept of ideals of subspaces of $L^2(M; d\mu)$. 
In this context we also refer to \cite[Sect.\ 4]{MVV05} and \cite{SV96}. For 
completeness, we offered the elementary proof of part $(iv)$ based on the observation \eqref{2.37}, 
which also differs from the arguments used in \cite{SV96} in the special case of symmetric forms. 
Part $(iv)$ of Theorem \ref{t2.8} will subsequently be applied in the context of elliptic partial 
differential operators in divergence form in Section \ref{s4}. 

Next we turn to additional results on positivity improving semigroups. We recall that if no nontrivial, 
closed subspace of $L^2(M; d\mu)$ is left invariant by $T$ and every bounded operator of 
multiplication on $L^2(M; d\mu)$, then this is indicated by the statement that 
$\{T\} \cup L^{\infty}(M; d\mu)$ {\it acts irreducibly on} $L^2(M; d\mu)$. 

\medskip 

\begin{theorem}  [\cite{RS78}, Thm.\ XIII.44; \cite{KR80}]~Assume \lb{t2.9} that $H$ is a 
semibounded, self-adjoint operator in $L^2(M; d\mu)$ with 
$\lambda_0 = \inf(\sigma(H))$. Then the following assertions $(i)$--$(viii)$ are 
equivalent: \\[1mm]
$(i)$ \;\, $e^{-t H}\succ 0$ for all $t > 0$. \\[1mm] 
$(ii)$ \, $e^{-t H}\succ 0$ for some $t > 0$. \\[1mm] 
$(iii)$ $\big\{e^{-tH}\big\}_{t>0} \cup L^{\infty}(M; d\mu)$ acts irreducibly on 
$L^2(M; d\mu)$. \\[1mm]
$(iv)$ \hspace*{.3mm} $e^{-tH} \cup L^{\infty}(M; d\mu)$ acts irreducibly on 
$L^2(M; d\mu)$ for some $t>0$. \\[1mm]
$(v)$ \; $(H - \lambda I_M)^{-1} \succ 0$ for all $\lambda < \lambda_0$. \\[1mm]
$(vi)$ \hspace*{.2mm} $(H - \lambda I_M)^{-1} \succ 0$ for some 
$\lambda < \lambda_0$. \\[1mm]
$(vii)$ $\big\{(H - \lambda I_M)^{-1}\big\}_{\lambda < \lambda_0} \cup 
L^{\infty}(M; d\mu)$ acts irreducibly on $L^2(M; d\mu)$. \\[1mm]
$(viii)$ $(H - \lambda I_M)^{-1} \cup 
L^{\infty}(M; d\mu)$ acts irreducibly on $L^2(M; d\mu)$ for some $\lambda < \lambda_0$. 
\end{theorem}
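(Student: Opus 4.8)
The plan is to deduce the eight-fold equivalence by separating it into three independent pieces: (a) passing between the semigroup and the resolvent through the Laplace transform identities \eqref{2.25}, \eqref{2.26}; (b) reformulating each ``acts irreducibly'' assertion as the absence of a nontrivial reducing subspace of $H$ of the special form $L^2(S;d\mu)$, $S\in\cM$; and (c) the genuine dynamical fact that such a reducing subspace is absent precisely when $e^{-tH}\succ0$ for all $t>0$. Following \cite[Thm.\ XIII.44]{RS78}, the natural framework here is that of a positivity preserving semigroup, $e^{-tH}\succcurlyeq0$, $t\geq0$ --- a property that follows from any of $(i)$, $(ii)$, $(v)$ (the last one via Theorem \ref{t2.7}) --- and I would carry it as a standing assumption while establishing the equivalence of $(iii)$--$(viii)$ with the rest. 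For step (b) I would use the elementary fact that a closed subspace of $L^2(M;d\mu)$ left invariant by every bounded multiplication operator is of the form $L^2(S;d\mu)$; thus $(iii)$ (resp.\ $(iv)$, $(vii)$, $(viii)$) states exactly that no $S\in\cM$ with $0<\mu(S)<\mu(M)$ has $L^2(S;d\mu)$ invariant under $e^{-tH}$ for all $t>0$ (resp.\ under $e^{-t_0H}$ for some $t_0$, under $(H-\lambda I_M)^{-1}$ for all $\lambda<\lambda_0$, under $(H-\lambda_1 I_M)^{-1}$ for some $\lambda_1$). Since $e^{-t_0H}$ and $(H-\lambda_1 I_M)^{-1}$ are self-adjoint, an invariant $L^2(S;d\mu)$ is reducing, and because the maps $\mu\mapsto e^{-t_0\mu}$ and $\mu\mapsto(\mu-\lambda_1)^{-1}$ are injective on $\sigma(H)$, a subspace reduces either of these operators if and only if it reduces $H$ itself, hence if and only if it reduces $e^{-tH}$ and $(H-\lambda I_M)^{-1}$ for every $t$ and $\lambda$. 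This yields $(iii)\Leftrightarrow(iv)\Leftrightarrow(vii)\Leftrightarrow(viii)$ at once via the spectral theorem.

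For step (a), given $e^{-tH}\succcurlyeq0$ and the representation \eqref{2.25}, for $0\leq f,g\in L^2(M;d\mu)\setminus\{0\}$ one has $\big(f,(H-\lambda I_M)^{-1}g\big)_M=\int_0^\infty e^{\lambda t}\big(f,e^{-tH}g\big)_M\,dt$ with a nonnegative integrand; hence $e^{-tH}\succ0$ for all $t>0$ makes the integral strictly positive, giving $(i)\Rightarrow(v)$, while a single $t_0$ with $\big(f,e^{-t_0H}g\big)_M>0$ makes it strictly positive by continuity of $t\mapsto\big(f,e^{-tH}g\big)_M$, giving $(ii)\Rightarrow(vi)$. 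Reading the same identity in reverse: if $(H-\lambda I_M)^{-1}\succ0$ then $\big(f,e^{-tH}g\big)_M>0$ on a set of $t$ of positive measure, so, being real-analytic on $(0,\infty)$ and nonnegative, $t\mapsto\big(f,e^{-tH}g\big)_M$ is not identically zero. Combining this with the cheap implications $(i)\Rightarrow(iii)$ and $(ii)\Rightarrow(iv)$ --- if $e^{-tH}\succ0$ and $0\leq h\in L^2(S;d\mu)\setminus\{0\}$ with $\mu(S)<\mu(M)$, then $e^{-tH}h>0$ $\mu$-a.e., so $e^{-tH}h\notin L^2(S;d\mu)$ --- and with $(i)\Rightarrow(ii)$, $(v)\Rightarrow(vi)$, $(vii)\Rightarrow(viii)$ being trivial, the entire theorem collapses to the single implication $(iii)\Rightarrow(i)$.

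That remaining implication is the classical ergodicity theorem of \cite[Thm.\ XIII.44]{RS78} and \cite{KR80}, and this is where the real work lies. I would reproduce it as follows: for $0\leq g\in L^2(M;d\mu)\setminus\{0\}$, introduce the measurable ``eventual support'' $S_g=\bigcup_{t\in\bbQ,\,t>0}\supp\big(e^{-tH}g\big)$; using $\slim_{t\downarrow0}e^{-tH}=I_M$ one obtains $\supp(g)\subseteq S_g$ up to a null set, and using the semigroup law together with the injectivity of each $e^{-tH}$ one checks that $L^2(S_g;d\mu)$ is invariant under every $e^{-tH}$, so under $(iii)$ necessarily $S_g=M$. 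Next, if $\big(f,e^{-t_1H}g\big)_M=0$ for some $t_1>0$ and some $0\leq f,g\neq0$, then $\big(e^{-rH}f,e^{-(t_1-r)H}g\big)_M=0$ for every $r\in[0,t_1]$, forcing $e^{-rH}f$ and $e^{-(t_1-r)H}g$ to have disjoint supports throughout; tracking how these supports can spread under the semigroup shows that the $\{e^{-tH}\}$-invariant hull of $\supp(f)$ would be disjoint from $S_g=M$, a contradiction. Hence $\big(f,e^{-tH}g\big)_M>0$ for all $t>0$ and all $0\leq f,g\neq0$, which is $(i)$ by \eqref{2.5}. (The ``some $t$ implies all $t$'' passage needed for $(ii)\Rightarrow(i)$ and $(vi)\Rightarrow(i)$ then drops out: for $t>t_0$ write $e^{-tH}g=e^{-(t-t_0)H}\big(e^{-t_0H}g\big)$ with $e^{-t_0H}g$ strictly positive and $e^{-(t-t_0)H}$ positivity preserving and injective; the short-time range, and the positivity improving property of the resolvents, are recovered by feeding ``$e^{-tH}\succ0$ on $(t_0,\infty)$'' back through \eqref{2.25} and then through \eqref{2.26}, also using \eqref{2.21A}.) The genuinely substantial and delicate step, which I expect to be the main obstacle, is precisely the support-spreading/invariant-hull argument: since $M$ carries no topology, ``support'' and ``invariant hull'' must be manipulated purely measure-theoretically, and I would follow the arguments of \cite{RS78} and \cite{KR80} closely for those details.
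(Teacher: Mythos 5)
The paper does not prove Theorem \ref{t2.9}: it records it as a quotation of \cite[Thm.\ XIII.44]{RS78} and \cite{KR80} and immediately moves on, so there is no internal proof to match your argument against. Your reductions are sound --- the identification of the $L^{\infty}(M;d\mu)$-invariant closed subspaces with the $L^2(S;d\mu)$'s, the passage from reducing $e^{-t_0H}$ or $(H-\lambda_1 I_M)^{-1}$ to reducing $H$ itself via Borel injectivity of the functional calculus, and the Laplace-transform bookkeeping through \eqref{2.25}, \eqref{2.26} --- and you are right both that everything collapses to $(iii)\Rightarrow(i)$ and that the standing hypothesis $e^{-tH}\succcurlyeq 0$ for all $t\geq 0$ (tacitly suppressed in the paper's statement, but indispensable: conditions $(iii)$--$(viii)$ alone carry no positivity information) must be imposed throughout.

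The one genuine gap is in the support-spreading step of $(iii)\Rightarrow(i)$. Disjointness of $\supp(e^{-rH}f)$ and $\supp(e^{-(t_1-r)H}g)$ for $r\in[0,t_1]$ is a one-parameter family of relations that does not, as stated, yield $S_f\cap S_g=\emptyset$. The missing ingredient is a monotonicity lemma: for $0\leq h\in L^2(M;d\mu)$ and $0\leq s\leq t$ one has $\supp\big(e^{-sH}h\big)\subseteq\supp\big(e^{-tH}h\big)$ modulo $\mu$-null sets. This reduces to $\supp(h)\subseteq\supp\big(e^{-rH}h\big)$, $r>0$, which follows by observing that for every $E\in\cM$ with $\mu(E\cap\supp(h))>0$,
\begin{equation*}
\big(\chi_{E}h,\,e^{-rH}(\chi_{E}h)\big)_M=\big\|e^{-rH/2}(\chi_{E}h)\big\|_M^2>0,
\end{equation*}
and since $0\leq e^{-rH}(\chi_{E}h)\leq e^{-rH}h$ by positivity preservation, this forces $\mu\big(E\cap\supp(e^{-rH}h)\big)>0$; as $E$ was arbitrary, $\supp(h)\subseteq\supp(e^{-rH}h)$. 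With the lemma in hand, set $\phi(t)=(f,e^{-tH}g)_M=\big(e^{-tH/2}f,\,e^{-tH/2}g\big)_M$. If $\phi(t_1)=0$, then $\supp(e^{-t_1H/2}f)\cap\supp(e^{-t_1H/2}g)=\emptyset$; by monotonicity both supports shrink as $t$ decreases, so $\phi\equiv 0$ on $(0,t_1]$, and real-analyticity of $\phi$ on $(0,\infty)$ then gives $\phi\equiv 0$, whence $\supp(f)\cap S_g=\emptyset$, contradicting $S_g=M$. Note also that it is this same support-domination mechanism (that $\supp(h_1)\subseteq\supp(h_2)$, $h_1,h_2\geq 0$, implies $\supp(e^{-tH}h_1)\subseteq\supp(e^{-tH}h_2)$, proved from self-adjointness and positivity preservation), and not the injectivity of $e^{-tH}$, that underwrites the invariance of $L^2(S_g;d\mu)$ under the semigroup that you assert without detail.
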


The following remarkable consequence of Theorem \ref{t2.9} will be useful later on.

\begin{corollary} [\cite{KR80}] \lb{c2.10}
Suppose that $H_j \geq 0$, $j=1,2$, are semibounded, self-adjoint operators in $L^2(M; d\mu)$,  
and assume that $H_j$ generate positivity preserving semigroups, $\exp(-t H_j) \succcurlyeq 0$, 
$j=1,2$, satisfying 
\begin{equation}
e^{-t H_1} \succcurlyeq e^{-t H_2} \succcurlyeq 0 \, \text{ for all } \, t > 0. 
\end{equation}
If either $e^{-t H_1} \succ 0$, or $e^{-t H_2} \succ 0$ for some $t>0$, then  
\begin{equation}
\text{either } \, e^{-t H_1} \succ e^{-t H_2}, \, \text{ or else, } \,  
e^{-t H_1} = e^{-t H_2} \, \text{ for all } \, t > 0.
\end{equation}
\end{corollary}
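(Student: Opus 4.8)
The plan is to reduce Corollary \ref{c2.10} to a dichotomy for a \emph{single} positivity preserving operator that is dominated by a positivity improving one, and then feed this into Theorem \ref{t2.9}. Without loss of generality assume $e^{-t H_1} \succ 0$ for all $t>0$ (by Theorem \ref{t2.9}, ``for some $t>0$'' upgrades to ``for all $t>0$''); the case $e^{-t H_2}\succ 0$ is handled by interchanging roles and using that $e^{-tH_2}\succcurlyeq 0$ together with $e^{-tH_1}\succcurlyeq e^{-tH_2}$ forces $e^{-tH_1}\succ 0$ as well. Fix $t>0$ and set $C_t := e^{-t H_1} - e^{-t H_2} \succcurlyeq 0$. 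The goal is to show that \emph{either} $C_t \succ 0$ for all $t>0$, \emph{or} $C_t = 0$ for all $t>0$.

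The key step is the following claim: if $0 \preccurlyeq C \preccurlyeq e^{-t H_1}$ for some fixed $t>0$, and $C$ is \emph{not} positivity improving, then $C = 0$. To prove this, suppose $C \succcurlyeq 0$ but $C \not\succ 0$. Then by the characterization in \eqref{2.5} (applied to $C$) there exist $0 \leq f, g \in L^2(M;d\mu)\setminus\{0\}$ with $(f, C g)_M = 0$. Since $0 \preccurlyeq C$, the integrand $(\overline{f}\,Cg)$ is nonnegative a.e., so actually $\chi_{{}_{\{f>0\}}} \cdot Cg = 0$ a.e.; equivalently, writing $E = \{f>0\}$, we get $Cg$ vanishes a.e.\ on $E$. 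Now exploit that $C$ is dominated by $e^{-tH_1}$: from $0 \preccurlyeq C \preccurlyeq e^{-tH_1}$ and $g \geq 0$ we have $0 \leq Cg \leq e^{-tH_1} g$ a.e., but $e^{-tH_1} g > 0$ a.e.\ since $e^{-tH_1} \succ 0$; hence $Cg$ cannot be forced to vanish on a set of positive measure unless the ``room'' between $C$ and $e^{-tH_1}$ collapses there. To make this rigorous one uses the semigroup/resolvent machinery: pass to resolvents via \eqref{2.25}, so $(H_1 - \lambda)^{-1} \succcurlyeq (H_2-\lambda)^{-1} \succcurlyeq 0$ and $(H_1-\lambda)^{-1} \succ 0$ for $\lambda<\lambda_0$ (the latter by Theorem \ref{t2.9}, item $(v)$). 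The set $N$ on which $e^{-sH_1}$ and $e^{-sH_2}$ agree "in direction $g$'' is, by irreducibility of $\{(H_1-\lambda)^{-1}\}\cup L^\infty(M;d\mu)$ acting on $L^2(M;d\mu)$ (Theorem \ref{t2.9}$(vii)$), either null or co-null — this is precisely the irreducibility dichotomy, and it is the mechanism by which a partial coincidence propagates to a global one.

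Concretely, the argument runs as follows. Define $\cN := \{h \in L^2(M;d\mu) : e^{-sH_1} h = e^{-sH_2} h \text{ for all } s > 0\}$; this is a closed subspace, and using the semigroup law $e^{-(s+s')H_j} = e^{-sH_j}e^{-s'H_j}$ together with $C_s \succcurlyeq 0$ and the domination, one checks $\cN$ is invariant under each $e^{-sH_1}$. Moreover $\cN$ is invariant under multiplication by $L^\infty$ functions: if $h \in \cN$ and $0 \leq h$, then for $0 \leq \phi \in L^\infty$ one shows $C_s(\phi h) = 0$ a.e.\ using $0 \leq C_s(\phi h) \leq \|\phi\|_\infty C_s h = 0$ — here is where $C_s \succcurlyeq 0$ and pointwise domination \eqref{1.2}/\eqref{2.13} do the work, after splitting $h = h_+ - h_-$ and a general $L^\infty$ function into real/imaginary and positive/negative parts. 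Thus $\{e^{-sH_1}\}_{s>0} \cup L^\infty(M;d\mu)$ leaves $\cN$ invariant, so by Theorem \ref{t2.9}$(iii)$ either $\cN = \{0\}$ or $\cN = L^2(M;d\mu)$. The latter says $e^{-sH_1} = e^{-sH_2}$ for all $s>0$, the second alternative in the Corollary. If instead $\cN = \{0\}$, then for every fixed $t>0$, $C_t = e^{-tH_1} - e^{-tH_2}$ has the property that $\ker(C_t)$ contains no nonzero element on which the two semigroups agree for all $s$ — but one still must rule out $C_t g$ vanishing on a positive-measure set for some $g \geq 0$; this is where one invokes the claim above once more, now knowing $\cN=\{0\}$, to conclude $C_t \succ 0$.

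The main obstacle I anticipate is the passage from a \emph{partial} coincidence — $Cg$ vanishing on a set $E$ of positive measure for a single $g$ and single $t$ — to membership in the globally invariant subspace $\cN$. Morally this is ``semigroup ergodicity'': one wants to say that if $e^{-tH_1}g$ and $e^{-tH_2}g$ agree on $E$ then, running the semigroup, the agreement set can only grow, and irreducibility forces it to be everything. Making this precise requires the Perron--Frobenius-type analysis of \cite{KR80}: one considers the spectral projection onto $E$, shows it commutes appropriately with the resolvents in the coincidence situation, and derives that the closed subspace generated is both semigroup- and multiplication-invariant. This is exactly the content that Theorem \ref{t2.9} packages, so the proof is really an exercise in correctly assembling $(i)$--$(viii)$ of that theorem around the domination hypothesis; the computational steps (splitting into positive parts, invoking \eqref{1.2} and \eqref{2.5}) are routine once the right invariant subspace has been identified.
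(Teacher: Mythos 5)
The paper gives no proof of Corollary \ref{c2.10}; it is simply cited from \cite{KR80}, so I assess your argument on its own merits. Several pieces are sound: the reduction to $e^{-tH_1}\succ 0$ for all $t>0$; the introduction of $\cN:=\{h\in L^2(M;d\mu) : e^{-sH_1}h=e^{-sH_2}h \text{ for all } s>0\}$ and the verification that it is closed and invariant under each $e^{-sH_1}$; and the pointwise-domination observation that for $0\leq h\in\cN$ and $0\leq\phi\in L^\infty(M;d\mu)$ one has $0\leq C_s(\phi h)\leq\|\phi\|_{L^\infty}C_sh=0$ (with $C_s:=e^{-sH_1}-e^{-sH_2}$), so $\phi h\in\cN$. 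Note, however, that the passage to general $h\in\cN$ via $h=h_+-h_-$ does not work as you suggest: $C_th=0$ together with $C_t\succcurlyeq 0$ only gives $C_th_+=C_th_-\geq0$, not that both vanish. This is harmless once one knows $\cN$ contains an a.e.\ strictly positive element $h_0:=e^{-\tau H_1}g_0$ (which follows from $g_0\in\cN$ and the semigroup invariance of $\cN$), because then $\{\phi h_0 : \phi\in L^\infty(M;d\mu)\}$ is already dense in $L^2(M;d\mu)$ and $\cN=L^2(M;d\mu)$ follows directly, with no need for Theorem \ref{t2.9} at all.

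The genuine gap is the one you flag yourself: nothing in the proposal establishes that a single zero $(f_0,C_{t_0}g_0)_M=0$, with $0\neq f_0,g_0\geq0$, forces $\cN\neq\{0\}$. Your ``key claim'' -- that any $0\preccurlyeq C\preccurlyeq e^{-tH_1}$ which fails to be positivity improving must vanish -- is false for a general bounded $C$ (a small multiple of $g\otimes g$ with $0\leq g$ supported on a set of positive but non-full measure is a counterexample), and Theorem \ref{t2.9} does not rescue it, since that theorem concerns irreducibility of a \emph{single} semigroup, not the coincidence set of two. What closes the gap is the telescoping identity
\[
C_t \;=\; e^{-(t/2)H_1}\,C_{t/2} \;+\; C_{t/2}\,e^{-(t/2)H_2}, \qquad t>0,
\]
in which both summands are positivity preserving. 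Pairing at $t=t_0$ against $f_0$ and $g_0$ forces both resulting scalar terms to vanish; since $e^{-(t_0/2)H_1}f_0>0$ $\mu$-a.e.\ and $C_{t_0/2}g_0\geq0$ $\mu$-a.e., the first gives $C_{t_0/2}g_0=0$, whence $e^{-(t_0/2)H_2}g_0=e^{-(t_0/2)H_1}g_0>0$ $\mu$-a.e.\ and the second gives $C_{t_0/2}f_0=0$. Iterating, the set $T:=\{s>0 : C_sg_0=0=C_sf_0\}$ contains $\{t_0/2^n\}_{n\geq1}$ and is midpoint-closed (for $s,s'\in T$, $C_{s+s'}g_0=C_se^{-s'H_1}g_0$, so $(f_0,C_{s+s'}g_0)_M=(C_sf_0,e^{-s'H_1}g_0)_M=0$, and one splits once more); being closed by strong continuity, $T\supseteq(0,t_0/2]$. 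Finally $z\mapsto C_zg_0$ is an $L^2(M;d\mu)$-valued holomorphic function on $\{\Re z>0\}$ (spectral theorem for $H_j\geq0$) vanishing on an interval, hence identically, so $g_0\in\cN$. Without this bootstrapping -- or some equivalent mechanism -- the dichotomy does not follow, and the proposal as written does not constitute a proof.
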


We also remark that these notions of positivity preserving (resp., improving) naturally extend to a two-Hilbert space setting in which one deals with a second Hilbert space $L^2(Y; d\nu)$ with $Y \subset X$ and 
$\wti \mu = \mu|_{Y}$, see, for instance, 
\cite{BKR80}, \cite{KR80}. This is also frequently done in connection with (nondensely defined) quadratic forms (cf., e.g., \cite[p.\ 61--62]{Da89}).

For subsequent purpose in Section \ref{s4} we also recall the following facts on sub-Markovian operators:

\begin{definition} \lb{d2.15}
An operator $A \in \cB\big(L^2(M; d\mu)\big)$ is called 
{\it $L^\infty$-contractive} if
\begin{equation}
\|A f\|_{L^\infty(M; d\mu)} \leq \|f\|_{L^\infty(M; d\mu)}, \quad 
f \in L^2(M; d\mu)\big) \cap L^\infty(M; d\mu)\big).
\end{equation}
Moreover, $A$ is called {\it sub-Markovian} if it is positivity preserving, 
$A \succcurlyeq 0$, and $L^\infty$-contractive. \\ 
Finally, a semigroup $\{T(t)\}_{t \geq 0} \subset \cB\big(L^2(M;d\mu)\big)$ is called a contraction 
semigroup on $L^p(M;d\mu)$ for some $p \geq 1$, if for all $t \geq 0$, $T(t)$ extends to a contraction on $L^p(M;d\mu)$. 
\end{definition}

One verifies that $A \in \cB\big(L^2(M; d\mu)\big)$ is  
$L^\infty$-contractive if and only if it leaves the closed, convex set 
$\cC = \big\{f \in L^2(M; d\mu) \, \big| \, |f| \leq 1 \, \text{$\mu$-a.e.} \big\}$ invariant, that is, 
$A \cC \subseteq \cC$ (cf.\ also \cite{Ou92}). Moreover, one readily verifies the following equivalent 
definition of $A \in \cB\big(L^2(M; d\mu)\big)$ being sub-Markovian, namely, 
\begin{align}
\begin{split} 
& A \, \text{ is sub-Markovian if and only if } \, f \in L^2(M; d\mu)\big), \; 0 \leq f \leq 1 \\ 
& \quad \text{implies } \, 0 \leq A f \leq 1.    \lb{2.79} 
\end{split}
\end{align}
In fact, \eqref{2.79} is used in \cite[Sect.\ 1.4]{FOT11} as the basic definition for a bounded operator to 
be {\it Markovian} (an alternative name for sub-Markovian). 

In addition, one notes that if $A \in \cB\big(L^2(M; d\mu)\big)$ pointwise 
dominates the operator $B \in \cB\big(L^2(M; d\mu)\big)$, and if $A$ is an $L^\infty$-contraction, 
then so is $B$, that is, if 
\begin{align}
\begin{split} 
& |B f| \leq A |f|, \; f \in L^2(M; d\mu) \, \text{ and $A$ is an 
$L^\infty$-contraction,}    \lb{2.80} \\
& \quad \text{then $B$ is an $L^\infty$-contraction,}
\end{split} 
\end{align} 
as the following elementary observation shows:
\begin{align} 
\begin{split}
& \|B f \|_{L^\infty(M;d\mu)} = \| |B f| \|_{L^\infty(M;d\mu)} \leq 
\| A |f| \|_{L^\infty(M;d\mu)} \leq \| |f| \|_{L^\infty(M;d\mu)}   \lb{2.81} \\
& \quad  = \|f \|_{L^\infty(M;d\mu)},   \quad 
f \in L^2(M; d\mu)\big) \cap L^\infty(M; d\mu)\big).    
\end{split} 
\end{align}

\medskip

Historically, the circle of ideas described in this section originated with Perron \cite{Pe07} and 
Frobenius \cite{Fr08}--\cite{Fr12} in the context of matrices, and was first extended to the 
infinite-dimensional case of compact integral operators by Jentzsch \cite{Je12} (see also 
\cite[p.\ 183]{Jo82}). The issue, apparently, was revived within the mathematical physics community 
by Glimm and Jaffe \cite{GJ70} around 1970 in the context of 
nondegenerate ground states (i.e., a simple eigenvalue at the bottom of the spectrum) of 
quantum (especially, quantum field theoretic) Hamiltonians. In this connection we refer, for instance, to 
\cite{AB80}, \cite{AB06}, \cite{Ar84}, \cite{Ar84a}, \cite{Ar87}, \cite{AB92}, \cite{AS57}, \cite{BKR80}, 
\cite{CM09}, \cite{Da73}, \cite[Ch.\ 7]{Da80}, \cite[Ch.\ 13]{Da07}, \cite{Fa72}, \cite[Sects.\ 8, 10]{Fa75}, 
\cite{FS75}, \cite{Fr09a}, \cite{Ge84}, \cite[Sect.\ 3.3]{GJ81}, \cite{Go77}, \cite{Gr72}, \cite{HS78}, 
\cite{HSU77}, \cite{KR80}, \cite{KR81}, \cite{Kr64}, \cite{KR50}, \cite{MVV05}, \cite{MO86}, \cite{Ou92}, 
\cite{Ou96}, \cite[Chs.\ 2, 3]{Ou05}, \cite{Pe81}, \cite{RL88}, \cite[Sect.\ XIII.12]{RS78}, \cite{Ro01}, 
\cite{Sc85}, \cite{Si73}, \cite{Si77a}, \cite{Si79}, \cite[Sect.\ 10.5]{We80}, \cite[Sect.\ 3.3]{Ya10} and 
the references cited therein for the basics and some of the applications of this subject.

\section{Robin-Type Boundary Conditions for Matrix-Valued  
Divergence Form Elliptic Partial Differential Operators}  \lb{s3}

In this section we develop the basics for divergence form elliptic partial differential 
operators with (nonlocal) Robin-type boundary conditions in bounded Lipschitz domains. In fact, we 
will go a step further in this section and develop the theory in the vector-valued case as this is certainly of interest in its own right. Thus, we will focus on $m\times m$, $m\in\bbN$, matrix-valued differential expressions $L$ which act as 
\begin{equation}\label{L-def1}
Lu = - \biggl(\sum_{j,k=1}^n\partial_j\bigg(\sum_{\beta = 1}^m a^{\alpha,\beta}_{j,k}\partial_k u_\beta\bigg)
\bigg)_{1\leq\alpha\leq m},\quad u=(u_1,\dots,u_m). 
\end{equation}

For basic facts on Sobolev spaces and Dirichlet and Neumann trace operators, as well as the choice of notation used below, we refer to Appendix \ref{sA}. For the basics on sesquilinear forms and operators associated with them we refer to Appendix \ref{sB}. Moreover, for the definition of Lipschitz domains 
$\Om$ and the associated Sobolev spaces on $\Om$, $H^s(\Om)$, and its boundary $\dOm$, 
$H^r(\dOm)$, we refer to Appendix \ref{sA}.  

In the remainder of this section we make the following assumption:

\begin{hypothesis} \lb{h3.1}
Let $n\in\bbN$, $n\geq 2$, and assume that $\Om\subset{\bbR}^n$ is
a bounded Lipschitz domain.
\end{hypothesis}

For simplicity of notation we will denote the identity operators in $\LOm$ and
$\LdOm$ by $I_{\Om}$ and $I_{\dOm}$, respectively. Also, in the sequel, the
sesquilinear form 
\begin{equation}
\langle \dott, \dott \rangle_{s}^{(m)}={}_{H^{s}(\dOm)^m}\langle\dott,\dott
\rangle_{H^{-s}(\dOm)^m}\colon H^{s}(\dOm)^m\times H^{-s}(\dOm)^m
\to \bbC, \quad s\in [0,1],
\end{equation} 
(antilinear in the first, linear in the second factor), will denote the duality
pairing between $H^s(\dOm)$ and 
\begin{equation}\lb{3.2}
H^{-s}(\dOm)^m=\big(H^s(\dOm)^m\big)^*,\quad s\in [0,1],
\end{equation} 
such that 
\begin{align}\lb{3.3}
\begin{split} 
& \langle f,g\rangle_s^{(m)}=\int_{\dOm} d^{n-1}\omega(\xi)\,\ol{f(\xi)}\cdot g(\xi),  
\\
& f\in H^s(\dOm)^m,\,g\in L^2(\dOm;d^{n-1}\omega)^m\hookrightarrow 
H^{-s}(\dOm)^m, \; s\in [0,1],
\end{split} 
\end{align}
where $d^{n-1}\omega$ stands for the surface measure on $\dOm$.

Below we introduce the class of multipliers on generic Banach spaces
(though we are primarily concerned with Sobolev spaces; for an authoritarian 
treatment of this topic the interested reader is referred to \cite{MS09}).

\begin{definition}\label{Pd-44}
Let $(X,\|\cdot\|_{X})$, $(Y,\|\cdot\|_{Y})$ be two given Banach spaces
of distributions in a domain $\Omega\subseteq\bbR^n$ with the 
property that $C^\infty(\overline{\Omega})$ is dense in both of these spaces.
In this context, define ${\mathcal{M}}(X\rightarrow Y)$, the space  
multipliers from $X$ to $Y$, as follows: If $M_a$ denotes the operator of pointwise
multiplication on $C^\infty(\overline{\Omega})$ by the function $a$, 
then $a\in{\mathcal{M}}(X\rightarrow Y)$ indicates that $M_a$ may be extended 
to an operator in $\cB(X,Y)$. Whenever $a\in{\mathcal{M}}(X\rightarrow Y)$, we set 
\begin{equation}\label{hgac}
\|a\|_{{\mathcal{M}}(X\rightarrow Y)}:=\|M_a\|_{\cB(X,Y)}.
\end{equation} 
Finally, we abbreviate ${\mathcal{M}}(X):={\mathcal{M}}(X\to X)$.
\end{definition}

Next, we wish to describe a weak version of the normal trace operator associated
with an $m\times m$, $m\in{\mathbb{N}}$, second-order system in divergence form 
\eqref{L-def1}, considered in a domain $\Omega\subseteq{\mathbb{R}}^n$. 
To set the stage, assume Hypothesis \ref{h2.1} and suppose that some 
$s\in(0,1)$ has been fixed with the property that the tensor coefficient 
\begin{equation}\label{igc-Ts.55}
A:=\bigl(a_{j,k}^{\alpha,\beta}\bigr)
_{\substack{1\leq\alpha,\beta\leq m \\ 1\leq j,k\leq n}}
\end{equation}
satisfies 
\begin{equation}\label{A-M1}
A\in{\mathcal{M}}(H^{s-1/2}(\Om)^m).
\end{equation}
One observes that the inclusion
\begin{equation}\label{inc-1}
\iota:H^{s_0}(\Omega)\hookrightarrow \bigl(H^r(\Omega)\bigr)^*,\quad
s_0>-1/2, \; r>1/2,
\end{equation} 
is well-defined and bounded. We then introduce the weak Neumann trace operator 
\begin{equation}\label{2.8}
\wti\ga_N\colon\big\{u\in H^{s+1/2}(\Om)^m\,\big|\,Lu\in H^{s_0}(\Om)^m\big\}
\to H^{s-1}(\dOm)^m,\quad s_0>-1/2,
\end{equation} 
as follows: Given $u\in H^{s+1/2}(\Om)^m$ with $Lu\in H^{s_0}(\Om)^m$
for some $s_0>-1/2$ and $s\in(0,1)$, we set (with $\iota$ as in
\eqref{inc-1} for $r:=3/2-s>1/2$) 
\begin{align}\label{2.9}
\langle\phi,\wti\ga_N u \rangle_{1-s}^{(m)} &= 
{}_{H^{1/2-s}(\Om)^m}\langle D\Phi, ADu\rangle_{(H^{1/2-s}(\Om)^m)^*}
\nonumber\\  
& \quad  - {}_{H^{3/2-s}(\Om)^m}\langle\Phi,\iota(Lu)\rangle_{(H^{3/2-s}(\Om)^m)^*},
\end{align} 
for all $\phi\in H^{1-s}(\dOm)^m$ and $\Phi\in H^{3/2-s}(\Om)^m$ such that
$\ga_D\Phi=\phi$. Above, we used 
\begin{equation}\label{ihah}
Du:=\bigl(\partial_j u_\alpha\bigr)_{\substack{1\leq\alpha\leq m\\ 1\leq j\leq n}}
\end{equation} 
to denote the Jacobian matrix of the vector-valued function 
$u=(u_\alpha)_{1\leq \alpha\leq m}$, and we employed the convention that, 
in general, 
\begin{equation}\label{A-def}
A\zeta:=\Bigl(\sum_{k=1}^n\sum_{\beta=1}^m a^{\alpha,\beta}_{j,k}\zeta^\beta_k
\Bigr)_{\substack{1\leq\alpha\leq m\\ 1\leq j\leq n}} \, \text{ for all } \, 
\zeta = \big(\zeta^\beta_k\big)_{\substack{1 \leq \beta \leq m\\ 1 \leq k \leq n}}
\in{\mathbb{C}}^{\, n\times m}. 
\end{equation} 
In addition, we will later employ the convention 
\begin{equation}\label{A-defAA}
\bigl\langle\eta,A\zeta\bigr\rangle:=\sum_{j,k=1}^n\sum_{\alpha,\beta=1}^m
a^{\alpha,\beta}_{j,k}\zeta^\beta_k\eta^\alpha_j,   
\end{equation} 
for all $\eta = \big(\eta^\alpha_j \big)_{\substack{1 \leq \alpha \leq m\\ 1 \leq j \leq n}}
\in{\mathbb{C}}^{\, n\times m}$, 
$\zeta = \big(\zeta^\beta_k\big)_{\substack{1 \leq \beta \leq m\\ 1 \leq k \leq n}}
\in{\mathbb{C}}^{\, n\times m}$. 

Returning to the mainstream discussion, we note that the first pairing on 
the right-hand side of \eqref{2.9} is meaningful granted \eqref{A-M1}, since 
\begin{equation}\label{2.9JJ}
\bigl(H^{1/2-s}(\Om)^m\bigr)^*=H^{s-1/2}(\Om)^m.
\end{equation} 
Moreover, the definition \eqref{2.9} is independent of the particular
extension $\Phi$ of $\phi$. Indeed, if $\Phi'\in H^{3/2-s}(\Om)^m$ 
is another vector-valued function such that $\ga_D\Phi'=\phi$
then $\Psi:=\Phi-\Phi'\in H^{3/2-s}(\Om)^m$ satisfies $\ga_D\Psi=0$.
Hence $\Psi\in H^{3/2-s}_0(\Om)^m$ and, as such, there exists a sequence
$\{\psi_j\}_{j\in{\mathbb{N}}}\subseteq C^\infty_0(\Omega)^m$ with the
property that  
\begin{equation}\label{Mar-a.1}
\psi_j\rightarrow\Psi\,\,\mbox{ in $H^{3/2-s}(\Om)^m$, as }\,\,j\to\infty.
\end{equation} 
Consequently, given that $D\psi_j\rightarrow D\Psi$ in $H^{1/2-s}(\Om)^m$ 
as $j\to\infty$, we may compute 
\begin{align}\label{Mar-a.2}
& {}_{H^{3/2-s}(\Om)^m}\langle\Psi,\iota(Lu)\rangle_{(H^{3/2-s}(\Om)^m)^*}
\nonumber\\
& \quad 
=\lim_{j\to\infty}
{}_{H^{3/2-s}(\Om)^m}\langle\psi_j,\iota(Lu)\rangle_{(H^{3/2-s}(\Om)^m)^*}
\nonumber\\
& \quad 
=\lim_{j\to\infty}
{}_{{\mathcal{D}}(\Om)^m}\langle\psi_j,Lu\rangle_{({\mathcal{D}}(\Om)^m)'}
\nonumber\\ 
& \quad 
= \lim_{j\to\infty}
{}_{{\mathcal{D}}(\Om)^m}\langle D\psi_j,ADu\rangle_{({\mathcal{D}}(\Om)^m)'}
\nonumber\\
& \quad 
= \lim_{j\to\infty}
{}_{H^{1/2-s}(\Om)^m}\langle D\psi_j,ADu\rangle_{(H^{1/2-s}(\Om)^m)^*}
\nonumber\\
& \quad 
= {}_{H^{1/2-s}(\Om)^m}\langle D\Psi, ADu\rangle_{(H^{1/2-s}(\Om)^m)^*},
\end{align} 
with $\cD(\Om)$ denoting the space of test functions (i.e., 
$C_0^\infty(\Omega)$) equipped with the usual inductive limit topology
(so that, in particular, $\cD^\prime(\Om)=\bigr(C_0^\infty(\Omega)\bigr)^\prime$ 
is the space of distributions in $\Om$). Now \eqref{Mar-a.2} gives that
${}_{H^{1/2-s}(\Om)^m}\langle D\Psi, ADu\rangle_{(H^{1/2-s}(\Om)^m)^*}
- {}_{H^{3/2-s}(\Om)^m}\langle\Psi,\iota(Lu)\rangle_{(H^{3/2-s}(\Om)^m)^*}=0$
which ultimately shows that 
\begin{align}\label{Mar-a.4}
& {}_{H^{1/2-s}(\Om)^m}\langle D\Phi, ADu\rangle_{(H^{1/2-s}(\Om)^m)^*}
- {}_{H^{3/2-s}(\Om)^m}\langle\Phi,\iota(Lu)\rangle_{(H^{3/2-s}(\Om)^m)^*}
\\
& \quad 
={}_{H^{1/2-s}(\Om)^m}\langle D\Phi', ADu\rangle_{(H^{1/2-s}(\Om)^m)^*}
- {}_{H^{3/2-s}(\Om)^m}\langle\Phi',\iota(Lu)\rangle_{(H^{3/2-s}(\Om)^m)^*}.
\nonumber
\end{align} 
From this, the desired conclusion (pertaining to the unambiguity of 
defining $\wti\ga_N u$ as in \eqref{2.9}) follows. 

One recalls that $\gamma_D$ has a linear right-inverse, that is, 
there exists a linear and bounded operator 
\begin{equation}\label{Mar-a.5}
{\mathcal{E}}:H^s(\dOm)\to H^{s+1/2}(\Om),\quad 0<s<1,
\end{equation} 
which is universal (in the sense that it does not depend on $s\in(0,1)$)
and satisfies 
\begin{equation}\label{Mar-a.6}
\gamma_D({\mathcal{E}}\phi)=\phi, \quad \phi\in H^s(\dOm),\quad 0<s<1.
\end{equation} 
Given an arbitrary $\phi\in H^s(\dOm)$ and choosing 
$\Phi:={\mathcal{E}}\phi\in H^{s+1/2}(\Om)$ in \eqref{2.7} then allows us to estimate 
\begin{equation}\label{Mar-a.7}
\|\wti\ga_N u\|_{H^{s-1}(\dOm)^m}\leq C\bigl(
\|A\|_{{\mathcal{M}}(H^{s-1/2}(\Om)^m)}\|u\|_{H^{s+1/2}(\Om)^m}
+\|Lu\|_{H^{s_0}(\Om)^m}\bigr)
\end{equation} 
for every $u$ in the domain of $\wti\ga_N$. This proves that the 
operator $\wti\ga_N$ in \eqref{2.9} is well-defined, linear, and bounded. 

It is instructive to point out that, in the case when $s=1/2$, condition 
\eqref{A-M1} reduces precisely to  
\begin{equation}\label{A-M2}
A\in L^{\infty}(\Om; d^nx)^{m\times m}.
\end{equation} 
In particular,  
\begin{equation}\label{2.8BBB}
\wti\ga_N\colon\big\{u\in H^{1}(\Om)^m\,\big|\,Lu\in H^{s_0}(\Om)^m\big\}
\to H^{-1/2}(\dOm)^m,\quad s_0>-1/2,
\end{equation} 
is well-defined, linear, and bounded, whenever \eqref{A-M2} holds.

\begin{hypothesis} \lb{h3.2}
Assume Hypothesis \ref{h3.1}, suppose that $\delta>0$ is a given number,
consider $m\in{\mathbb{N}}$, and assume 
that $\Theta\in\cB\big(H^{1/2}(\dOm)^m,H^{-1/2}(\dOm)^m\big)$
is a self-adjoint operator $($in the sense discussed in \eqref{B.5}$)$ which can be written as 
\begin{equation}\label{Filo-1}
\Theta=\Theta^{(1)}+\Theta^{(2)}+\Theta^{(3)},
\end{equation} 
where $\Theta^{(j)}$, $j=1,2,3$, have the following properties: There exists 
a closed sesquilinear form $\gq_{\dOm}^{(0)}$ in $\LdOm^m$,
with domain $H^{1/2}(\dOm)^m\times H^{1/2}(\dOm)^m$, which is bounded from below
by $c_{\dOm}\in\bbR$ $($hence, $\gq_{\dOm}^{(0)}$ is symmetric\,$)$ 
such that if $\Theta_{\dOm}^{(0)} \geq c_{\dOm}I_{\dOm}$ denotes
the self-adjoint operator in $\LdOm^m$ uniquely associated with $\gq_{\dOm}^{(0)}$
$($cf. \eqref{B.25}$)$, then $\Theta^{(1)}=\wti\Theta_{\dOm}^{(0)}$, the extension
of $\Theta_{\dOm}^{(0)}$ to an operator in $\cB\big(H^{1/2}(\dOm)^m,H^{-1/2}(\dOm)^m\big)$
$($as discussed in \eqref{B.24a} and \eqref{B.28a}$)$. In addition,  
\begin{equation}\label{Filo-2}
\Theta^{(2)}\in\cB_{\infty}\big(H^{1/2}(\dOm)^m,H^{-1/2}(\dOm)^m\big),
\end{equation} 
whereas $\Theta^{(3)}\in\cB\big(H^{1/2}(\dOm),H^{-1/2}(\dOm)\big)$ satisfies  
\begin{equation}\label{Filo-3}
\big\|\Theta^{(3)}\big\|_{\cB(H^{1/2}(\dOm)^m,(H^{-1/2})^m(\dOm))}<\delta.
\end{equation}
\end{hypothesis}

We record the following useful result involving the Dirichlet trace operator $\ga_D$.

\begin{lemma}\lb{l3.3}
Assume Hypothesis \ref{h3.1} and fix $m\in{\mathbb{N}}$. Then for every $\varepsilon>0$
there exists a $\beta(\varepsilon)>0$
$($with $\beta(\varepsilon)\underset{\varepsilon\downarrow 0}{=}
\Oh(1/\varepsilon)$$)$ such that 
\begin{equation} \lb{3.7}
\|\gamma_D u\|_{\LdOm^m}^2 \le
\varepsilon \|Du\|_{\LOm^m}^2 + \beta(\varepsilon) \|u\|_{\LOm^m}^2, 
\quad u\in H^1(\Om)^m.
\end{equation}
\end{lemma}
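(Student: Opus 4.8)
The statement is the standard $\varepsilon$–$\beta(\varepsilon)$ form of the boundedness of the Dirichlet trace $\gamma_D\colon H^1(\Om)^m\to L^2(\dOm;d^{n-1}\omega)^m$, combined with the compactness of the embedding $H^1(\Om)^m\hookrightarrow L^2(\Om;d^nx)^m$ and the compactness of the trace map into $L^2(\dOm)^m$. The plan is to reduce to the scalar case ($m=1$) componentwise — the vector-valued estimate follows by summing $m$ copies of the scalar inequality, since $\|Du\|_{\LOm^m}^2=\sum_{\alpha=1}^m\|\nabla u_\alpha\|_{\LOm}^2$ and likewise for the other two norms — and then prove the scalar trace inequality by a compactness/contradiction argument. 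So assume $m=1$.

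First I would recall (from Appendix \ref{sA}, which the excerpt says summarizes trace theory on Lipschitz domains) that $\gamma_D\in\cB\big(H^1(\Om),H^{1/2}(\dOm)\big)$ and that the embeddings $H^1(\Om)\hookrightarrow L^2(\Om;d^nx)$ and $H^{1/2}(\dOm)\hookrightarrow L^2(\dOm;d^{n-1}\omega)$ are both compact (Rellich–Kondrachov for bounded Lipschitz $\Om$, and the analogous compact embedding of Sobolev spaces on the Lipschitz manifold $\dOm$). Hence $\gamma_D\colon H^1(\Om)\to L^2(\dOm;d^{n-1}\omega)$ is a compact operator. Now argue by contradiction: if \eqref{3.7} failed for some $\varepsilon_0>0$, then for every $k\in\bbN$ there is $u_k\in H^1(\Om)$ with
\begin{equation}
\|\gamma_D u_k\|_{\LdOm}^2 > \varepsilon_0\|\nabla u_k\|_{\LOm}^2 + k\|u_k\|_{\LOm}^2.
\end{equation}
Normalizing so that $\|\gamma_D u_k\|_{\LdOm}=1$, boundedness of $\gamma_D$ gives $1\le C\|u_k\|_{H^1(\Om)}^2$ so $\|u_k\|_{H^1(\Om)}$ is bounded below, while the inequality forces $\varepsilon_0\|\nabla u_k\|_{\LOm}^2\le 1$ and $k\|u_k\|_{\LOm}^2\le 1$; thus $\{u_k\}$ is bounded in $H^1(\Om)$ and $\|u_k\|_{\LOm}\to 0$. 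Passing to a subsequence, $u_k\rightharpoonup u$ weakly in $H^1(\Om)$, and by Rellich compactness $u_k\to u$ strongly in $\LOm$; comparing limits gives $u=0$. But $\gamma_D$ compact implies $\gamma_D u_k\to \gamma_D u=0$ strongly in $\LdOm$, contradicting $\|\gamma_D u_k\|_{\LdOm}=1$.

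That proves the existence of a finite $\beta(\varepsilon)$ for each $\varepsilon>0$; the remaining point is the quantitative rate $\beta(\varepsilon)=\Oh(1/\varepsilon)$ as $\varepsilon\downarrow 0$. The cleanest way to get this is to avoid the contradiction argument for the rate and instead use a trace inequality with an explicit small-parameter dependence. On a Lipschitz domain one has the local (flattened-boundary) estimate $\int_{\dOm}|\gamma_D u|^2\,d^{n-1}\omega \le C\big(\|u\|_{\LOm}\|\nabla u\|_{\LOm} + \|u\|_{\LOm}^2\big)$, obtained from the fundamental theorem of calculus in the normal direction in boundary coordinate charts and a partition of unity; then Young's inequality $ab\le \tfrac{\varepsilon}{2C}a^2+\tfrac{C}{2\varepsilon}b^2$ applied to $a=\|\nabla u\|_{\LOm}$, $b=\|u\|_{\LOm}$ yields \eqref{3.7} with $\beta(\varepsilon)=C' + C''/\varepsilon = \Oh(1/\varepsilon)$. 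I expect the main obstacle to be pinning down exactly which trace estimate on Lipschitz domains to cite for this bilinear form $\|u\|_{\LOm}\|\nabla u\|_{\LOm}$ — the qualitative inequality is routine, but the $\Oh(1/\varepsilon)$ claim genuinely needs the scaling-type estimate rather than abstract compactness, so I would state it as a known consequence of the Lipschitz trace theory recalled in Appendix \ref{sA} and do the Young's-inequality splitting explicitly. Finally, summing over the $m$ components restores the vector-valued inequality \eqref{3.7} with the same $\beta(\varepsilon)$.
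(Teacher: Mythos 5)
Your proposal is correct, and the componentwise reduction to the scalar case $m=1$ is exactly what the paper does; the paper then simply cites \cite{GM09} for the scalar inequality, whereas you supply the argument. One remark on organization: the compactness/contradiction argument you give first is sound as a proof that \emph{some} finite $\beta(\varepsilon)$ exists, but, as you yourself flag, it is qualitatively blind to the rate and so does not produce the claimed $\beta(\varepsilon)\underset{\varepsilon\downarrow 0}{=}\Oh(1/\varepsilon)$; it is therefore a detour you can drop entirely. The part that actually proves the lemma is the second half: the Lipschitz-domain trace bound $\|\gamma_D u\|_{\LdOm}^2\leq C\bigl(\|u\|_{\LOm}\|\nabla u\|_{\LOm}+\|u\|_{\LOm}^2\bigr)$ (obtained by the fundamental theorem of calculus in the normal variable in flattened boundary charts plus a partition of unity), followed by Young's inequality $ab\leq\tfrac{\varepsilon}{2C}a^2+\tfrac{C}{2\varepsilon}b^2$ with $a=\|\nabla u\|_{\LOm}$, $b=\|u\|_{\LOm}$, which yields \eqref{3.7} with an explicit $\beta(\varepsilon)=C+C^2/(2\varepsilon)=\Oh(1/\varepsilon)$. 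Summing over the $m$ components, using $\|Du\|_{\LOm^m}^2=\sum_{\alpha}\|\nabla u_\alpha\|_{\LOm}^2$ (and likewise for the other two norms), then gives the vector-valued inequality with the same $\varepsilon$ and $\beta(\varepsilon)$, exactly as in the paper. In short: correct, same reduction as the paper, but streamline by removing the compactness argument and going straight to the scaling trace estimate.
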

\begin{proof}
The case when $m=1$ (corresponding to the situation when $u$ is scalar-valued
and hence $Du=\nabla u$) has been treated in \cite{GM09}. The vector-valued
case then follows from this by summing up such scalar estimates. 
\end{proof}

Lemma \ref{l3.3} is a key ingredient in proving the $H^1(\Om)$-coercivity of the sesquilinear form 
$\gQ_{\Theta,\Om,L}(\dott,\dott)$ in Theorem \ref{t3.4} below.  Before stating it, we recall our 
conventions \eqref{A-def} and \eqref{A-defAA} and introduce the following hypothesis.

\begin{hypothesis} \lb{h.MM}
Assume Hypothesis \ref{h3.1} and suppose that $L$ is an $m\times m$,  
$m\in{\mathbb{N}}$, second-order system in divergence form expressed 
as in \eqref{L-def1} for a tensor coefficient as in \eqref{igc-Ts.55}. 
In addition, assume that \eqref{A-M2} holds, that is, 
$A\in L^{\infty}(\Om; d^nx)^{m\times m}$, and that the tensor coefficient 
$A$ satisfies the strong Legendre ellipticity condition 
$($for some $a_0>0$$)$ 
\begin{equation}\label{strongellipticity}
{\rm Re}\,\bigl\langle\overline{\zeta},A\zeta\bigr\rangle
={\rm Re}\,\bigg(\sum_{j,k=1}^n\sum_{\alpha,\beta=1}^m
a_{j,k}^{\alpha,\beta}\zeta_k^\beta\overline{\zeta_j^\alpha}\,\bigg) 
\geq a_0 |\zeta|^2,
\end{equation} 
for every $\zeta= \big(\zeta_j^\alpha\big)_{\substack{1\leq\alpha\leq m \\ 1\leq j \leq n}} 
\in{\mathbb{C}}^{n\times m}$. Finally, suppose $A$ satisfies the symmetry condition 
\begin{equation}\label{symm-22}
a^{\alpha,\beta}_{j,k}=\ol{a^{\beta,\alpha}_{k,j}},\quad
\alpha,\beta\in\{1,\dots,m\}, \; j,k\in\{1,\dots,n\}.
\end{equation}
\end{hypothesis}

The following result extends work from \cite{GM09} carried out for the special case of the Laplacian 
$-\Delta$. 

\begin{theorem}\lb{t3.4}
Assume Hypothesis \ref{h3.2}, where the number $\delta>0$ is taken
to be sufficiently small relative to the Lipschitz character of $\Om$, more precisely, 
suppose that $0<\delta \leq \f{1}{6} \|\gamma_D\|^{-2}_{\cB(H^1(\Om),H^{1/2}(\dOm))}$.
In addition, assume Hypothesis \ref{h.MM} and consider the sesquilinear 
form $\gQ_{\Theta,\Om}(\dott,\dott)$ defined on $H^1(\Om)^m\times H^1(\Om)^m$ by 
\begin{align} \lb{3.8}
\begin{split} 
\gQ_{\Theta,\Om} (u,v):=\int_{\Om} d^nx\,
\bigl\langle\overline{(Du)(x)},A(x)(Dv)(x)\bigr\rangle
+\big\langle\gamma_D u,\Theta\gamma_D v \big\rangle_{1/2}^{(m)},&   \\
u,v\in H^1(\Om)^m.&  
\end{split} 
\end{align} 
Then there exists $\kappa>1$ with the property that the form 
\begin{equation} \label{FI-5}
\gQ_{\Theta,\Om,\kappa}(u,v):=\gQ_{\Theta,\Om}(u,v)+\kappa\,(u,v)_{L^2(\Om;d^nx)^m},
\quad u,v\in H^1(\Om)^m,
\end{equation} 
is $H^1(\Om)^m$-coercive. As a consequence, the form $\gQ_{\Theta,\Om}(\dott,\dott)$ 
in \eqref{3.8} is symmetric, $H^1(\Om)^m$-bounded, bounded from below, and closed 
in $L^2(\Om;d^n x)^m$.
\end{theorem}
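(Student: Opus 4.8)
The plan is to establish $H^1(\Om)^m$-coercivity of $\gQ_{\Theta,\Om,\kappa}(\dott,\dott)$ for a suitable $\kappa>1$, and then deduce the remaining assertions (symmetry, $H^1$-boundedness, semiboundedness, closedness) by routine form-theoretic arguments. The key point is to bound the boundary term $\langle\gamma_D u,\Theta\gamma_D u\rangle_{1/2}^{(m)}$ from below using the decomposition $\Theta=\Theta^{(1)}+\Theta^{(2)}+\Theta^{(3)}$ from Hypothesis \ref{h3.2}, the trace inequality of Lemma \ref{l3.3}, and the strong Legendre ellipticity \eqref{strongellipticity}.

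First I would record that $A\in L^\infty(\Om;d^nx)^{m\times m}$ (via \eqref{A-M2}) gives $H^1(\Om)^m$-boundedness of the volume term $\int_\Om\langle\ol{Du},A\,Dv\rangle\,d^nx$, while \eqref{symm-22} makes it symmetric and, by \eqref{strongellipticity}, gives $\int_\Om\langle\ol{Du},A\,Du\rangle\,d^nx\geq a_0\|Du\|_{\LOm^m}^2$. Symmetry of the full form $\gQ_{\Theta,\Om}$ then follows from self-adjointness of $\Theta\in\cB(H^{1/2}(\dOm)^m,H^{-1/2}(\dOm)^m)$ together with the compatibility of the duality pairing $\langle\dott,\dott\rangle_{1/2}^{(m)}$ with complex conjugation as in \eqref{3.3}. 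Next I would estimate the boundary term: the $\Theta^{(1)}=\wti\Theta_{\dOm}^{(0)}$ contribution is bounded below, using $\Theta_{\dOm}^{(0)}\geq c_{\dOm}I_{\dOm}$, by $c_{\dOm}\|\gamma_D u\|_{\LdOm^m}^2$ (or, if $c_{\dOm}<0$, absorbed via Lemma \ref{l3.3}); the $\Theta^{(2)}$ contribution, being compact from $H^{1/2}(\dOm)^m$ to $H^{-1/2}(\dOm)^m$, can be dominated for any $\e>0$ by $\e\|\gamma_D u\|_{H^{1/2}(\dOm)^m}^2+C(\e)\|\gamma_D u\|_{\LdOm^m}^2$ using the standard interpolation/Ehrling-type argument for compact operators between the Sobolev scale; and the $\Theta^{(3)}$ contribution is bounded in absolute value by $\delta\|\gamma_D u\|_{H^{1/2}(\dOm)^m}^2$, which by boundedness of $\gamma_D:H^1(\Om)\to H^{1/2}(\dOm)$ and the hypothesis $\delta\leq\tfrac16\|\gamma_D\|_{\cB(H^1(\Om),H^{1/2}(\dOm))}^{-2}$ is at most $\tfrac16\|u\|_{H^1(\Om)^m}^2=\tfrac16(\|Du\|_{\LOm^m}^2+\|u\|_{\LOm^m}^2)$.

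Assembling these, I would choose $\e$ small enough (relative to $a_0$ and the embedding constants mapping $\|\gamma_D u\|_{H^{1/2}}$ into $\|Du\|_{\LOm}+\|u\|_{\LOm}$) so that the $\e$-term plus the $\tfrac16$-term from $\Theta^{(3)}$ together absorb at most $\tfrac13\|Du\|_{\LOm^m}^2$ plus a constant multiple of $\|u\|_{\LOm^m}^2$, and apply Lemma \ref{l3.3} to convert any residual $\|\gamma_D u\|_{\LdOm^m}^2$ terms into $\tfrac{a_0}{3}\|Du\|_{\LOm^m}^2+\beta\|u\|_{\LOm^m}^2$. The net effect is a lower bound
\begin{equation}
\gQ_{\Theta,\Om}(u,u)\geq \tfrac{a_0}{3}\|Du\|_{\LOm^m}^2 - C_0\|u\|_{\LOm^m}^2,\quad u\in H^1(\Om)^m,
\end{equation}
for some $C_0>0$, so that with $\kappa:=C_0+\tfrac{a_0}{3}+1>1$ one gets $\gQ_{\Theta,\Om,\kappa}(u,u)\geq\tfrac{a_0}{3}(\|Du\|_{\LOm^m}^2+\|u\|_{\LOm^m}^2)\geq c\|u\|_{H^1(\Om)^m}^2$, which is the asserted coercivity. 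Coercivity of $\gQ_{\Theta,\Om,\kappa}$ together with its $H^1(\Om)^m$-boundedness immediately yields closedness of $\gQ_{\Theta,\Om,\kappa}$ on $H^1(\Om)^m$ (the form norm is equivalent to the $H^1$ norm), hence closedness of $\gQ_{\Theta,\Om}=\gQ_{\Theta,\Om,\kappa}-\kappa(\dott,\dott)_{\LOm^m}$ as a bounded perturbation, and boundedness from below with lower bound $\geq-\kappa$.

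The main obstacle is the careful bookkeeping of the three boundary contributions — in particular, handling $\Theta^{(2)}$: one must justify that compactness of $\Theta^{(2)}:H^{1/2}(\dOm)^m\to H^{-1/2}(\dOm)^m$ yields, for every $\e>0$, a bound $|\langle\gamma_D u,\Theta^{(2)}\gamma_D u\rangle_{1/2}^{(m)}|\leq\e\|\gamma_D u\|_{H^{1/2}(\dOm)^m}^2+C(\e)\|\gamma_D u\|_{\LdOm^m}^2$. This is the standard fact that a compact operator is "small" relative to the form on $H^{1/2}$ modulo a lower-order term; concretely one can factor through a finite-rank approximation and control the remainder in operator norm, or invoke the compact embedding $H^{1/2}(\dOm)^m\hookrightarrow\hookrightarrow\LdOm^m$ and an interpolation inequality. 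Once this lemma is in hand, everything else is a matter of tracking constants, with the hypothesis $0<\delta\leq\tfrac16\|\gamma_D\|_{\cB(H^1(\Om),H^{1/2}(\dOm))}^{-2}$ precisely engineered to leave enough room in the coercivity budget.
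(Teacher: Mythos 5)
Your proof is correct and follows essentially the same route as the paper's: estimate the three pieces of $\Theta=\Theta^{(1)}+\Theta^{(2)}+\Theta^{(3)}$ separately, absorb $\Theta^{(1)}$ via the trace inequality of Lemma \ref{l3.3}, treat $\Theta^{(2)}$ by a compactness/Ehrling argument, control $\Theta^{(3)}$ by the smallness of $\delta$, and then bring in the strong ellipticity \eqref{strongellipticity}. The one technical variation is in the $\Theta^{(2)}$ step: you apply the Ehrling-type estimate \emph{on the boundary} (with $\cV=H^{1/2}(\dOm)^m$, $\cW=L^2(\dOm)^m$, $T$ the compact embedding) and then feed the resulting $\|\gamma_D u\|_{\LdOm^m}^2$ term back through Lemma \ref{l3.3}, whereas the paper invokes Lemma \ref{l.WW} once directly in the interior with $\cV=H^1(\Om)^m$, $\cW=L^2(\Om;d^nx)^m$, and $K=\gamma_D^*\Theta^{(2)}\gamma_D\in\cB_\infty\bigl(H^1(\Om)^m,(H^1(\Om)^m)^*\bigr)$, so that the conclusion already lands on $\|u\|_{H^1}^2$ and $\|u\|_{L^2(\Om)}^2$ with no further conversion needed. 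Both are legitimate instances of the same abstract Lemma \ref{l.WW}; the paper's is marginally more economical (one application rather than one plus another pass through Lemma \ref{l3.3}), and yours emphasizes the boundary-operator viewpoint, but the content is the same.
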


In the proof of Theorem \ref{t3.4}, the following abstract functional analytic result
is going to be useful.

\begin{lemma}[\cite{GM09}]\lb{l.WW}
Let $\cV$ be a reflexive Banach space, $\cW$ a Banach space, assume 
that $K\in\cB_{\infty}(\cV,\cV^*)$, and that $T\in\cB(\cV,\cW)$ is one-to-one. 
Then for every $\varepsilon>0$ there exists $C_{\varepsilon}>0$ such that 
\begin{equation}\lb{K-u1}
\big|{}_{\cV}\langle u,Ku\rangle_{\cV^*}\big|\leq\varepsilon\|u\|^2_{\cV}
+C_{\varepsilon}\|Tu\|^2_{\cW},\quad u\in\cV.
\end{equation}
\end{lemma}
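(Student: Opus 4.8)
\textbf{Proof proposal for Lemma~\ref{l.WW}.}
The plan is to argue by contradiction. Suppose the asserted inequality fails. Then there exists some $\varepsilon_0>0$ for which no admissible constant $C$ works; feeding in $C=j$ for each $j\in\bbN$ produces a sequence $\{u_j\}_{j\in\bbN}\subset\cV$ with
\begin{equation}\label{WW-contra}
\big|{}_{\cV}\langle u_j,Ku_j\rangle_{\cV^*}\big| > \varepsilon_0\|u_j\|^2_{\cV}
+ j\,\|Tu_j\|^2_{\cW},\quad j\in\bbN.
\end{equation}
By homogeneity of \eqref{WW-contra} in $u_j$ we may normalize $\|u_j\|_{\cV}=1$ for all $j$. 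Since $\cV$ is reflexive and $\{u_j\}_{j\in\bbN}$ is bounded, after passing to a subsequence (which I will not relabel) we have $u_j\rightharpoonup u_\infty$ weakly in $\cV$ for some $u_\infty\in\cV$.

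The key step is to exploit $K\in\cB_\infty(\cV,\cV^*)$: compact operators map weakly convergent sequences to norm-convergent ones, so $Ku_j\to Ku_\infty$ strongly in $\cV^*$. Combined with $u_j\rightharpoonup u_\infty$ in $\cV$, this gives ${}_{\cV}\langle u_j,Ku_j\rangle_{\cV^*}\to{}_{\cV}\langle u_\infty,Ku_\infty\rangle_{\cV^*}$ (write the difference as ${}_{\cV}\langle u_j,K(u_j-u_\infty)\rangle_{\cV^*}+{}_{\cV}\langle u_j-u_\infty,Ku_\infty\rangle_{\cV^*}$ and bound the first term by $\|u_j\|_{\cV}\|K(u_j-u_\infty)\|_{\cV^*}\to 0$, the second tending to $0$ by weak convergence). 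In particular the left-hand side of \eqref{WW-contra} stays bounded. But then the second term on the right of \eqref{WW-contra} forces $\|Tu_j\|_{\cW}\to 0$; since $T\in\cB(\cV,\cW)$ is weak-to-weak continuous and $u_j\rightharpoonup u_\infty$, we get $Tu_j\rightharpoonup Tu_\infty$ weakly in $\cW$, and by weak lower semicontinuity of the norm $\|Tu_\infty\|_{\cW}\leq\liminf_{j\to\infty}\|Tu_j\|_{\cW}=0$, so $Tu_\infty=0$. As $T$ is one-to-one, $u_\infty=0$.

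Now pass to the limit in \eqref{WW-contra}: the left-hand side converges to ${}_{\cV}\langle u_\infty,Ku_\infty\rangle_{\cV^*}=0$ (using $u_\infty=0$), while the right-hand side is at least $\varepsilon_0\|u_j\|^2_{\cV}=\varepsilon_0>0$ for every $j$. Taking $\liminf$ yields $0\geq\varepsilon_0$, a contradiction. This proves the lemma.

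The step I expect to be the main (and really the only genuine) obstacle is establishing that the quadratic form $u\mapsto{}_{\cV}\langle u,Ku\rangle_{\cV^*}$ is sequentially weakly continuous along the normalized sequence; everything hinges on the compactness of $K$ converting weak convergence of $u_j$ into norm convergence of $Ku_j$, after which the pairing passes to the limit by the standard bilinear argument. The remaining ingredients—boundedness of the normalized sequence, extraction of a weakly convergent subsequence via reflexivity, weak-to-weak continuity of the bounded operator $T$, weak lower semicontinuity of $\|\cdot\|_{\cW}$, and injectivity of $T$—are routine once that point is in hand.
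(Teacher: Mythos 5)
Your proof is correct. The contradiction-plus-normalization argument, extracting a weakly convergent subsequence via reflexivity, using compactness of $K$ to pass from weak convergence of $u_j$ to norm convergence of $Ku_j$ (and hence convergence of the quadratic form), forcing $Tu_j\to 0$, identifying $u_\infty=0$ via injectivity of $T$ together with weak lower semicontinuity of $\|\cdot\|_\cW$, and finally contradicting the $\varepsilon_0\|u_j\|^2_\cV=\varepsilon_0$ lower bound, is precisely the standard Ehrling/Lions-type compactness argument, and it is the one used in \cite{GM09} (the paper at hand does not reproduce the proof but merely cites that source).
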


We are now ready to present the 

\vskip 0.10in
\noindent{\it Proof of Theorem \ref{t3.4}}.
We shall show that $\kappa>0$ can be chosen large enough so that 
\begin{align}\lb{2.22Y.1} 
&\frac16\|u\|^2_{H^1(\Om)^m}\leq\frac{1}{3}\int_{\Om}d^nx\,|(Du)(x)|^2
+\frac{\kappa}{3}\int_{\Om}d^nx\,|u(x)|^2+
\big\langle\gamma_D u,\Theta^{(j)}\gamma_D u \big\rangle_{1/2}^{(m)},    \no \\
& \hspace*{7cm}  u\in H^1(\Om)^m, \; j=1,2,3, 
\end{align} 
where $\Theta^{(j)}$, $j=1,2,3$, are as introduced in Hypothesis \ref{h3.2}.
Summing up these three inequalities then proves that the form \eqref{FI-5} is
indeed $H^1(\Om)^m$-coercive. To this end, we assume first $j=1$ and recall
that there exists $c_{\Theta_0}\in\bbR$ such that 
\begin{equation}\label{PdPd-1}
\big\langle\ga_D u,\Theta^{(1)}\,\ga_D u\big\rangle_{1/2}^{(m)}
\geq c_{\Theta_0}\|\ga_D u\|_{L^2(\dOm;d^{n-1}\omega)^m}^2,\quad u\in H^1(\Om)^m.
\end{equation} 
Thus, in this case, it suffices to show that
\begin{align}\lb{PdPd-2}
& \max\,\{-c_{\Theta_0}\,,\,0\}\,\|\ga_D u\|_{L^2(\dOm;d^{n-1}\omega)^m}^2
+\frac{1}{6}\|u\|^2_{H^1(\Om)}
\no \\
&\quad 
\leq\frac{1}{3}\int_{\Om}d^nx\,|(Du)(x)|^2
+\frac{\kappa}{3}\int_{\Om}d^nx\,|u(x)|^2,
\quad u\in H^1(\Om)^m,
\end{align} 
or, equivalently, that   
\begin{align}\lb{PdPd-3}
& \max\,\{-c_{\Theta_0}\,,\,0\}\,\|\ga_D u\|_{L^2(\dOm;d^{n-1}\omega)^m}^2
\no \\
& \quad\leq\frac{1}{6}\int_{\Om}d^nx\,|(Du)(x)|^2
+\frac{2\kappa-1}{6}\int_{\Om}d^nx\,|u(x)|^2,\quad u\in H^1(\Om)^m,
\end{align} 
with the usual convention, 
\begin{equation}
\|u\|^2_{H^1(\Om)^m}=\|Du\|^2_{_{L^2(\Om;d^n x)^m}} 
+ \|u\|^2_{_{L^2(\Om;d^n x)^m}}, \quad u\in H^1(\Om)^m.
\end{equation} 
Now, the fact that there exists $\kappa>0$ for which \eqref{PdPd-3} holds
follows directly from Lemma \ref{l3.3}.

Next, one observes that in the case where $j=2,3$, the estimate \eqref{2.22Y.1} is
implied by
\begin{equation}\lb{2.22Y.2}
\big|\big\langle\gamma_D u,\Theta^{(j)}\gamma_D u\big\rangle_{1/2}^{(m)}\big|
\leq\frac{1}{6}\int_{\Om}d^nx\,|(Du)(x)|^2
+\frac{2\kappa-1}{6}\int_{\Om}d^nx\,|u(x)|^2,\quad u\in H^1(\Om)^m,
\end{equation} 
or, equivalently, by  
\begin{equation}\lb{2.22Y.3}
\big|\big\langle\gamma_D u,\Theta^{(j)}\gamma_D u\big\rangle_{1/2}^{(m)}\big|
\leq\frac{1}{6}\|u\|^2_{H^1(\Om)^m}
+\frac{\kappa-1}{3}\|u\|^2_{L^2(\Om;d^nx)^m},\quad u\in H^1(\Om)^m.
\end{equation} 
When $j=2$, in which case 
$\Theta^{(2)} \in\cB_{\infty}\big(H^1(\Om)^m,\bigl(H^1(\Om)^m\bigr)^*\big)$,
we invoke Lemma \ref{l.WW} with  
\begin{equation}\lb{K-u5}
\cV:=H^1(\Om)^m,\quad \cW:=L^2(\Om,d^nx)^m,
\end{equation} 
and, with $\gamma_D\in\cB\big(H^1(\Om)^m,H^{1/2}(\dOm)^m\big)$ denoting the 
Dirichlet trace (acting componentwise), 
\begin{equation}\lb{PdPd-4}
K:=\gamma_D^*\Theta_2\gamma_D
\in\cB_{\infty}\big(H^1(\Om)^m,\bigl(H^1(\Om)^m\bigr)^*\big),
\quad T:=\iota:H^1(\Om)^m\hookrightarrow L^2(\Om,d^nx)^m,
\end{equation} 
the inclusion operator. Then, with $\varepsilon=1/6$ and
$\kappa:=3C_{1/6}+1$, the estimate \eqref{K-u1} yields \eqref{2.22Y.3} for $j=2$.

Finally, consider \eqref{2.22Y.3} in the case where $j=3$ and note that 
by hypothesis, 
\begin{align}\lb{2.22Y.4}
& \big|\big\langle\gamma_D u,\Theta^{(3)}\gamma_D u\big\rangle_{1/2}^{(m)}\big|
\leq \big\|\Theta^{(3)}\big\|_{\cB(H^{1/2}(\dOm)^m,H^{-1/2}(\dOm)^m)}
\|\gamma_D u\|^2_{H^{1/2}(\dOm)^m}
\no \\
& \quad \leq \delta \|\gamma_D\|^2_{\cB(H^1(\Om)^m,H^{1/2}(\dOm)^m)}\|u\|^2_{H^1(\Om)^m}, 
\quad u\in H^1(\Om)^m.
\end{align} 
Thus \eqref{2.22Y.3} also holds for $j=3$ if
\begin{equation}\lb{2.22Y.5}
0<\delta \leq \f{1}{6} \|\gamma_D\|^{-2}_{\cB(H^1(\Om),H^{1/2}(\dOm))} \, 
\text{ and } \, \kappa>1.  
\end{equation} 
This completes the justification of the estimate \eqref{2.22Y.1}. In turn, this 
further implies, with the help of the strong Legendre ellipticity condition, that 
\begin{align}\lb{2.22Y.1a}
\frac16\|u\|^2_{H^1(\Om)^m}& \leq\frac{1}{3\lambda}
\int_{\Om} d^nx\,\bigl\langle\overline{(Du)(x)},A(x)(Du)(x)\bigr\rangle
\no \\
& \quad +\frac{\kappa}{3}\int_{\Om}d^nx\,|u(x)|^2+
\big\langle\gamma_D u,\Theta^{(j)}\gamma_D u \big\rangle_{1/2}^{(m)},  
\\
& \hspace*{2.96cm}  u\in H^1(\Om)^m, \; j=1,2,3,   \no 
\end{align} 
where $\lambda>0$ is as in \eqref{strongellipticity}. The estimate \eqref{2.22Y.1a}
establishes the claim about the sesquilinear form in \eqref{FI-5}.
Moreover, the symmetry of the sesquilinear form $\gQ_{\Theta,\Om,L}(\dott,\dott)$ 
from \eqref{3.8} is a direct consequence of \eqref{symm-22}. Finally, the 
remaining claims in the statement of Theorem \ref{t3.4} are implicit in what 
we have proved so far, and this finishes the proof of Theorem \ref{t3.4}.
\hfill$\square$
\vskip 0.10in

Next, we turn to a discussion of the realization of an $m\times m$ system $L$ as a 
self-adjoint operator on $L^2(\Om;d^nx)^m$ when equipped with certain nonlocal 
Robin boundary conditions in a bounded Lipschitz subdomain $\Omega$ of $\bbR^n$. 
Below, $\wti\gamma_N$ denotes the weak Neumann trace operator discussed in 
\eqref{2.8}--\eqref{2.8BBB}.

\begin{theorem}\lb{t3.5BBB}
Assume Hypothesis \ref{h3.2}, where the number $\delta>0$ is taken
to be sufficiently small as in Theorem \ref{t3.4}. In addition, assume 
Hypothesis \ref{h.MM}. Then $L_{\Theta,\Om}$, the $L^2$-realization of $L$ 
equipped with a nonlocal Robin boundary condition in $L^2(\Om;d^nx)^m$, defined by 
\begin{align}
& L_{\Theta,\Om} = L,   \no \\
& \, \dom(L_{\Theta,\Om})=
\big\{u\in H^1(\Om)^m \,\big|\,Lu\in L^2(\Om;d^nx)^m,     \lb{3.10BBB} \\
& \hspace*{2.3cm}\big(\wti\gamma_N+\Theta\gamma_D\big)u=0 
\text{ in $H^{-1/2}(\dOm)^m$}\big\},   \no
\end{align} 
is self-adjoint and bounded from below. Moreover, 
\begin{equation}\lb{3.11BBB}
\dom\big(|L_{\Theta,\Om}|^{1/2}\big)=H^1(\Om)^m,
\end{equation} 
and $L_{\Theta,\Om}$, has purely discrete spectrum bounded from below, in particular, 
\begin{equation}\lb{3.12}
\sigma_{\rm ess}(L_{\Theta,\Om})=\emptyset.
\end{equation} 
Finally, $L_{\Theta,\Om}$ is the operator uniquely associated with the sesquilinear 
form $\gQ_{\Theta,\Om}$ in Theorem \ref{t3.4}. 
\end{theorem}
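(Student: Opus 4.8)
The plan is to obtain Theorem \ref{t3.5BBB} from Theorem \ref{t3.4} by invoking the representation theorem for closed, densely defined, symmetric, bounded-from-below sesquilinear forms (cf.\ Appendix \ref{sB}). By Theorem \ref{t3.4}, $\gQ_{\Theta,\Om}$ is such a form in $L^2(\Om;d^nx)^m$ with form domain $H^1(\Om)^m$; hence there is a unique self-adjoint, bounded-from-below operator $S$ in $L^2(\Om;d^nx)^m$ associated with it, satisfying $\dom\big(|S|^{1/2}\big)=H^1(\Om)^m$ (which is \eqref{3.11BBB}), and with the property that $u\in\dom(S)$ and $Su=w$ if and only if $u\in H^1(\Om)^m$ and $\gQ_{\Theta,\Om}(u,v)=(w,v)_{L^2(\Om;d^nx)^m}$ for all $v\in H^1(\Om)^m$. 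The real work is to identify $\dom(S)$ with the right-hand side of \eqref{3.10BBB} and to prove $\sigma_{\rm ess}(L_{\Theta,\Om})=\emptyset$; self-adjointness, semiboundedness, and the statement that $L_{\Theta,\Om}$ is uniquely associated with $\gQ_{\Theta,\Om}$ then follow immediately.

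The identification hinges on a weak Green's formula extracted from the definition \eqref{2.9} of $\wti\ga_N$. I would specialize \eqref{2.9} to $s=1/2$---so that the multiplier condition \eqref{A-M1} reduces to \eqref{A-M2}, which is part of Hypothesis \ref{h.MM}---and to $s_0=0>-1/2$. Then $H^{1/2-s}(\Om)^m=L^2(\Om;d^nx)^m$ and $H^{3/2-s}(\Om)^m=H^1(\Om)^m$, so that $\wti\ga_N u\in H^{-1/2}(\dOm)^m$ is defined for every $u\in H^1(\Om)^m$ with $Lu\in L^2(\Om;d^nx)^m$, and choosing $\Phi:=v\in H^1(\Om)^m$ as an admissible extension of $\ga_D v$ in \eqref{2.9}, then taking complex conjugates and using the symmetry \eqref{symm-22} of $A$, yields
\[
\int_{\Om}d^nx\,\bigl\langle\overline{(Du)(x)},A(x)(Dv)(x)\bigr\rangle
=(Lu,v)_{L^2(\Om;d^nx)^m}+\overline{\big\langle\ga_D v,\wti\ga_N u\big\rangle_{1/2}^{(m)}},\quad v\in H^1(\Om)^m.
\]
Adding the boundary term of $\gQ_{\Theta,\Om}$ from \eqref{3.8} and using the self-adjointness of $\Theta$ (so that $(f,g)\mapsto\langle f,\Theta g\rangle_{1/2}^{(m)}$ is Hermitian) gives
\[
\gQ_{\Theta,\Om}(u,v)=(Lu,v)_{L^2(\Om;d^nx)^m}+\overline{\big\langle\ga_D v,\big(\wti\ga_N+\Theta\ga_D\big)u\big\rangle_{1/2}^{(m)}},\quad v\in H^1(\Om)^m,
\]
for every $u\in H^1(\Om)^m$ with $Lu\in L^2(\Om;d^nx)^m$.

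From this identity the identification follows both ways. If $u$ lies in the right-hand side of \eqref{3.10BBB}, the boundary pairing vanishes, hence $\gQ_{\Theta,\Om}(u,v)=(Lu,v)_{L^2(\Om;d^nx)^m}$ for all $v\in H^1(\Om)^m$, so $u\in\dom(S)$ with $Su=Lu$. Conversely, if $u\in\dom(S)$, then testing $\gQ_{\Theta,\Om}(u,\cdot)=(Su,\cdot)_{L^2(\Om;d^nx)^m}$ against $v\in C_0^\infty(\Om)^m$ (for which the boundary term in \eqref{3.8} drops out and the interior term is the distributional pairing of $Lu$ with $v$) identifies $Lu=Su\in L^2(\Om;d^nx)^m$; the displayed Green identity then forces $\big\langle\ga_D v,(\wti\ga_N+\Theta\ga_D)u\big\rangle_{1/2}^{(m)}=0$ for all $v\in H^1(\Om)^m$, and since $\ga_D\colon H^1(\Om)^m\to H^{1/2}(\dOm)^m$ is onto (it has the bounded right inverse $\cE$ of \eqref{Mar-a.5}, \eqref{Mar-a.6}, applied componentwise), the pairing vanishes against all $\phi\in H^{1/2}(\dOm)^m$, i.e.\ $(\wti\ga_N+\Theta\ga_D)u=0$ in $H^{-1/2}(\dOm)^m=\big(H^{1/2}(\dOm)^m\big)^*$. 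Thus $S=L_{\Theta,\Om}$. For the essential spectrum: since by Theorem \ref{t3.4} the shifted form $\gQ_{\Theta,\Om,\kappa}$ is $H^1(\Om)^m$-coercive, the coercivity estimate $\|(L_{\Theta,\Om}+\kappa I_{\Om})^{-1}f\|_{H^1(\Om)^m}\leq C\,\|f\|_{L^2(\Om;d^nx)^m}$ shows $(L_{\Theta,\Om}+\kappa I_{\Om})^{-1}$ maps $L^2(\Om;d^nx)^m$ boundedly into $H^1(\Om)^m$; composing with the compact Rellich--Kondrachov embedding $H^1(\Om)^m\hookrightarrow L^2(\Om;d^nx)^m$ (valid since $\Om$ is a bounded Lipschitz domain, cf.\ Appendix \ref{sA}) gives $(L_{\Theta,\Om}+\kappa I_{\Om})^{-1}\in\cB_{\infty}\big(L^2(\Om;d^nx)^m\big)$, so $\sigma(L_{\Theta,\Om})$ is purely discrete and bounded from below, which is \eqref{3.12}.

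The main obstacle is establishing the weak Green's formula with full rigor: one must verify that $\wti\ga_N u$ is well-defined on $\{u\in H^1(\Om)^m\,|\,Lu\in L^2(\Om;d^nx)^m\}$ (the role of the choices $s=1/2$, $s_0=0$, and the admissibility of the extension $\Phi=v$), and carefully track the complex conjugations so that the symmetry \eqref{symm-22} of $A$ and the self-adjointness of $\Theta$ combine to reproduce exactly the pairing $\big\langle\ga_D v,(\wti\ga_N+\Theta\ga_D)u\big\rangle_{1/2}^{(m)}$; the surjectivity of $\ga_D$ (furnished by $\cE$) is then what turns the scalar vanishing of this pairing into the operator identity $(\wti\ga_N+\Theta\ga_D)u=0$ in $H^{-1/2}(\dOm)^m$.
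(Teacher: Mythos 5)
Your proposal is correct and follows essentially the same route as the paper's proof: both invoke the first representation theorem for the closed, symmetric, semibounded form $\gQ_{\Theta,\Om}$ obtained in Theorem \ref{t3.4}, both identify the operator domain via the weak Green's formula built into the definition \eqref{2.9} of $\wti\gamma_N$ (specialized to $s=1/2$, $s_0=0$, with $\Phi=v$ as the extension of $\gamma_D v$) together with the surjectivity of $\gamma_D\colon H^1(\Om)^m\to H^{1/2}(\dOm)^m$, and both deduce \eqref{3.12} from the compactness of $H^1(\Om)^m\hookrightarrow L^2(\Om;d^nx)^m$. The only cosmetic difference is that you work with $\gQ_{\Theta,\Om}(u,v)=(Lu,v)_{L^2}+\overline{\langle\gamma_D v,(\wti\gamma_N+\Theta\gamma_D)u\rangle_{1/2}^{(m)}}$ rather than its complex conjugate $\gQ_{\Theta,\Om}(v,u)=(v,Lu)_{L^2}+\langle\gamma_D v,(\wti\gamma_N+\Theta\gamma_D)u\rangle_{1/2}^{(m)}$ as in the paper, which is equivalent by the Hermitian symmetry of the form and the self-adjointness of $\Theta$.
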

\begin{proof}
Denote by $\gQ_{\Theta,\Om}(\dott,\dott)$ the sesquilinear form introduced 
in \eqref{3.8}. From Theorem \ref{t3.4}, we know that $\gQ_{\Theta,\Om}$ is symmetric,
$H^1(\Om)^m$-bounded, bounded from below, as well as densely defined and closed 
in $\LOm^m\times\LOm^m$. Thus, if as in \eqref{B.30}, we now introduce the operator
$L_{\Theta,\Om}$ in $L^2(\Om;d^n x)^m$ by 
\begin{align}\lb{2.31}
& \dom(L_{\Theta,\Om})=\bigg\{v\in H^1(\Om)^m\,\bigg|\, 
\text{there exists some $w_v\in L^2(\Om;d^n x)^m$ such that}
\nonumber\\[4pt]
& \,\,\,
\int_{\Om} d^nx\,\bigl\langle\overline{Dw},A(Dv)\bigr\rangle
+ \big\langle\gamma_D w,\Theta\gamma_D v\big\rangle_{1/2}^{(m)}
=\int_{\Om}d^n x\,\ol{w}\cdot w_v\text{ for all $w\in H^1(\Om)^m$}\bigg\},
\nonumber\\[4pt]
& L_{\Theta,\Om}u=w_u,\quad u\in\dom(L_{\Theta,\Om}),
\end{align} 
it follows from \eqref{B.19}--\eqref{B.31} (cf., in particular \eqref{B.25})
that $L_{\Theta,\Om}$ is self-adjoint and bounded from below in
$L^2(\Om;d^n x)^m$ and that \eqref{3.11BBB} holds. Next we recall that 
\begin{equation}\label{H-zer}
H_0^1(\Om)^m=\big\{u\in H^1(\Om)^m\,\big|\,\gamma_Du=0\mbox{ on }\partial\Omega\big\},
\end{equation} 
where, as usual, the Dirichlet trace operator $\gamma_D$ acts componentwise. 
Taking $v\in C_0^\infty(\Omega)^m\hookrightarrow H^1_0(\Om)^m
\hookrightarrow H^1(\Om)^m$, one concludes that if $u\in\dom(L_{\Theta,\Om})$ then  
\begin{equation}\lb{2.32}
\int_{\Om}d^nx\,{\ol v}\cdot w_u = \int_{\Om}d^nx\,{\ol v}\cdot Lu\,\
\text{ for all $v\in C_0^\infty(\Om)$, hence }\,w_u
= Lu\,\text{ in }\,\bigl(\cD(\Om)^m\bigr)^\prime.
\end{equation} 
Going further, suppose that $u\in\dom(L_{\Theta,\Om})$ and
$v\in H^1(\Om)^m$. We recall that $\gamma_D\colon H^1(\Om)^m\to H^{1/2}(\dOm)^m$
boundedly and compute 
\begin{align}
\int_{\Om} d^nx\,\bigl\langle\overline{Dv},A(Du)\bigr\rangle
& = \int_{\Om}d^n x\,{\ol v}\cdot Lu
+\langle\gamma_D v,\wti\gamma_N u\rangle_{1/2}^{(m)}
\nonumber\\[4pt]
& =\int_{\Om}d^n x\,{\ol v}\cdot w_u
+\big\langle\gamma_D v,\big(\wti\gamma_N+\Theta\gamma_D\big)u\big\rangle_{1/2}^{(m)}
-\big\langle\gamma_D v,\Theta\gamma_D u\big\rangle_{1/2}^{(m)}
\nonumber\\[4pt]
& =\int_{\Om} d^nx\,\bigl\langle\overline{Dv},A(Du)\bigr\rangle
+\big\langle\gamma_D v,\big(\wti\gamma_N+\Theta\gamma_D\big)u\big\rangle_{1/2}^{(m)},
\lb{2.33a}
\end{align} 
where we used the second line in \eqref{2.31}. Hence, 
\begin{equation}
\big\langle \gamma_D v, \big(\wti\gamma_N+\Theta\gamma_D\big) u
\big\rangle_{1/2}^{(m)}=0.
\lb{2.34a}
\end{equation} 
Since $v\in H^1(\Om)^m$ is arbitrary, and the map
$\gamma_D\colon H^1(\Om)^m\to H^{1/2}(\dOm)^m$ is actually onto,
one concludes that 
\begin{equation}
\big(\wti\gamma_N+\Theta\gamma_D\big)u=0\,\text{ in }\,H^{-1/2}(\dOm)^m.
\lb{2.35a}
\end{equation} 
Thus, 
\begin{align}\lb{2.36a}
\begin{split} 
& \dom(L_{\Theta,\Om}) \subseteq \big\{v\in H^1(\Om)^m \,\big|\,
Lv\in L^2(\Om; d^nx)^m,  \\
& \hspace*{2.1cm} \big(\wti\gamma_N+\Theta\gamma_D\big)v=0
\text{ in } H^{-1/2}(\dOm)^m\big\}.
\end{split} 
\end{align} 
Next, assume that $u\in \big\{v\in H^1(\Om)^m \,\big|\,Lv\in L^2(\Om;d^n x)^m,\,
\big(\wti\gamma_N+\Theta\gamma_D\big)v=0\big\}$, $w\in H^1(\Om)^m$,
and let $w_u= Lu\in L^2(\Om; d^n x)^m$. Then, 
\begin{align}
\int_{\Om} d^n x \, {\ol w}\cdot w_u
&= \int_{\Om} d^n x \, {\ol w}\cdot Lu   \no \\
&=\int_{\Om} d^nx\,\bigl\langle\overline{Dw},A(Du)\bigr\rangle
- \langle \gamma_D w, \wti \gamma_N u \rangle_{1/2}^{(m)} \no \\
&=\int_{\Om} d^nx\,\bigl\langle\overline{Dw},A(Du)\bigr\rangle
+ \big\langle \gamma_D w,\Theta \gamma_D u \big\rangle_{1/2}^{(m)}.
\lb{2.37a}
\end{align} 
Thus, applying \eqref{2.31}, one concludes that
$u \in \dom(-\Delta_{\Theta,\Om})$ and hence 
\begin{align}\lb{2.38}
\begin{split} 
& \dom(L_{\Theta,\Om}) \supseteq \big\{v\in H^1(\Om)^m\,\big|\,
\Delta v \in L^2(\Om; d^n x)^m\,   \\
& \hspace*{2.1cm} 
\big(\wti\gamma_N+\Theta\gamma_D\big) v=0
  \text{ in } H^{-1/2}(\dOm)^m\big\}.
  \end{split} 
\end{align} 
Finally, the last claim in the statement of Theorem \ref{t3.5BBB} follows
from the fact that $H^1(\Om)^m$ embeds compactly into $\LOm^m$ (cf., e.g.,
\cite[Theorem~V.4.17]{EE89}).
\end{proof}

\begin{remark} \lb{r3.8}
We emphasize the explicit form of the domain of the operator $L_{\Theta, \Om}$ displayed 
in \eqref{3.10BBB} in terms of boundary trace operators $\gamma_D$ and $\wti\gamma_N$. 
This particular feature of 
an explicit domain rather than an operator defined via the underlying sesquilinear form and 
the First Representation Theorem is one of the reasons for our choice of Lipschitz domains 
$\Om$; it also dictates our conditions on $\Theta$ in Hypothesis \ref{h3.2}. 
\end{remark}

In the special case $\Theta=0$, that is, in the case of Neumann boundary conditions,
we will also use the notation
\begin{equation} \lb{3.13}
\gQ_{N,\Om}(\dott,\dott)= \gQ_{0,\Om}(\dott,\dott), \quad L_{N,\Om}:= L_{0,\Om}.
\end{equation}

When specialized to the case $m=1$ and $L= - \Delta$, Theorem \ref{t3.5BBB} yields 
a family of self-adjoint Laplace operators $-\Delta_{\Theta,\Om}$ in $L^2(\Om;d^nx)$
indexed by the boundary operator $\Theta$, which we shall refer to
as {\it nonlocal} Robin Laplacians. More specifically, we obtain the following result 
first proved in \cite{GM09}.

\begin{corollary} \lb{t3.5}
Assume Hypothesis \ref{h3.2} $($with $m=1$$)$, where the number $\delta>0$ is taken
to be sufficiently small as in Theorem \ref{t3.4}.
Then $-\Delta_{\Theta,\Om}$, the nonlocal Robin Laplacian in $L^2(\Om;d^nx)$ 
defined by
\begin{align}
& -\Delta_{\Theta,\Om}=-\Delta,   \no \\
& \; \dom(-\Delta_{\Theta,\Om})=
\big\{u\in H^1(\Om) \,\big|\, \Delta u\in L^2(\Om;d^nx),     \lb{3.10} \\
& \hspace*{2.8cm}\big(\wti\gamma_N+\Theta \gamma_D\big)u=0 \text{ in $H^{-1/2}(\dOm)$}\big\}, \no
\end{align}
is self-adjoint and bounded from below. Moreover,
\begin{equation}\lb{3.11}
\dom\big(|-\Delta_{\Theta,\Om}|^{1/2}\big)=H^1(\Om),
\end{equation}
and $-\Delta_{\Theta,\Om}$, has purely discrete spectrum bounded from below,
in particular,
\begin{equation}\lb{3.12a}
\sigma_{\rm ess}(-\Delta_{\Theta,\Om})=\emptyset.
\end{equation}
Finally, $-\Delta_{\Theta,\Om}$ is the operator uniquely associated with the sesquilinear 
form 
\begin{equation}\lb{2.22a}
\gq_{\Theta,\Om}(u,v):=\int_{\Om} d^nx\,\ol{(\nabla u)(x)}\cdot(\nabla v)(x)
+\big\langle\gamma_D u,\Theta\gamma_D v \big\rangle_{1/2},
\quad u,v\in H^1(\Om).
\end{equation}
\end{corollary}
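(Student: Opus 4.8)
The plan is to deduce this corollary directly from Theorem \ref{t3.5BBB} by specializing the matrix-valued framework of Section \ref{s3} to the scalar case $m=1$ with $L=-\Delta$. Concretely, I would take the tensor coefficient to be the constant Kronecker tensor, $a^{1,1}_{j,k}=\delta_{j,k}$ for $1\le j,k\le n$, so that the differential expression \eqref{L-def1} becomes $Lu=-\sum_{j=1}^n\partial_j^2 u=-\Delta u$, and the conventions \eqref{A-def}, \eqref{A-defAA} collapse to $A\zeta=\zeta$ and $\bigl\langle\eta,A\zeta\bigr\rangle=\eta\cdot\zeta$ for $\eta,\zeta\in\bbC^n$. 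In particular the weak Neumann trace operator $\wti\ga_N$ from \eqref{2.9} reduces, for this choice of $A$, to the familiar weak normal derivative trace appearing in \eqref{3.10}.

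The first step is to check that Hypothesis \ref{h.MM} is satisfied by this $A$ (Hypothesis \ref{h3.2} with $m=1$ and the smallness of $\delta$ being assumed outright). This is immediate: $A$ is constant, hence $A\in L^\infty(\Om;d^nx)^{1\times 1}$, so \eqref{A-M2} holds; the strong Legendre ellipticity condition \eqref{strongellipticity} holds with $a_0=1$ because $\Re\bigl\langle\ol\zeta,A\zeta\bigr\rangle=|\zeta|^2$; and the symmetry condition \eqref{symm-22} is trivial since $\delta_{j,k}=\delta_{k,j}$. Thus all hypotheses of Theorems \ref{t3.4} and \ref{t3.5BBB} are in force.

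The second (and essentially final) step is to read off the conclusions. Theorem \ref{t3.5BBB} produces a self-adjoint, bounded from below operator $L_{\Theta,\Om}$ with domain \eqref{3.10BBB}; with $A$ the identity tensor and $m=1$ this is precisely the operator $-\Delta_{\Theta,\Om}$ of \eqref{3.10}. The form-domain identity \eqref{3.11} and the emptiness of the essential spectrum \eqref{3.12a} are the specializations of \eqref{3.11BBB} and \eqref{3.12}. Finally, for $A=I_n$ and $m=1$ the form \eqref{3.8} becomes
\[
\gQ_{\Theta,\Om}(u,v)=\int_{\Om}d^nx\,\ol{(\nabla u)(x)}\cdot(\nabla v)(x)+\big\langle\ga_D u,\Theta\ga_D v\big\rangle_{1/2},\qquad u,v\in H^1(\Om),
\]
which is exactly $\gq_{\Theta,\Om}$ in \eqref{2.22a}; hence the last assertion of Theorem \ref{t3.5BBB} gives that $-\Delta_{\Theta,\Om}$ is the operator uniquely associated with $\gq_{\Theta,\Om}$. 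There is no real obstacle here: the whole content of the corollary is already contained in Theorem \ref{t3.5BBB}, and the only point requiring mild care is the bookkeeping verification that the general conventions \eqref{A-def}--\eqref{A-defAA} and the general weak Neumann trace \eqref{2.9} reduce to the standard scalar objects when $A$ is the identity tensor. (Alternatively, one may simply cite \cite{GM09}, where this scalar case was first established directly.)
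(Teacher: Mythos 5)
Your proposal is correct and follows exactly the route taken in the paper, which presents Corollary~\ref{t3.5} as the specialization of Theorem~\ref{t3.5BBB} to $m=1$ and $L=-\Delta$ (i.e., $A=I_n$), noting the result was first established in \cite{GM09}. Your added bookkeeping checks (that the identity tensor satisfies Hypothesis~\ref{h.MM}, and that \eqref{A-def}--\eqref{A-defAA}, \eqref{2.9}, and \eqref{3.8} reduce to the scalar objects) are accurate and are exactly what the specialization requires.
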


In the special case $\Theta=0$, that is, in the case of the Neumann Laplacian,
we will also use the notation
\begin{equation} \lb{3.13a}
\gq_{N,\Om}(\dott,\dott)= \gq_{0,\Om}(\dott,\dott), \quad -\Delta_{N,\Om}:=-\Delta_{0,\Om}.
\end{equation}

Next, we briefly comment on the usual case of a local Robin boundary condition, 
that is, the case where $\Theta$ is the operator of multiplication $M_{\theta}$
by a function $\theta$ defined on $\dOm$: 

\begin{lemma} [\cite{GM09}] \lb{l3.6}
Assume Hypothesis \ref{h2.1} and suppose that $\Theta=M_\theta$, the operator
of multiplication with the function $\theta\in L^p(\dOm;d^{n-1}\omega)$, where
\begin{equation}\label{Fpp}
p=n-1 \,\mbox{ if } \, n>2,  \mbox{ and } \, p\in(1,\infty] \,\mbox{ if } \, n=2. 
\end{equation}
Then
\begin{equation}\label{FGN-13}
\Theta\in\cB_{\infty}\big(H^{1/2}(\dOm),H^{-1/2}(\dOm)\big)
\end{equation}
is a self-adjoint operator which satisfies
\begin{equation}\label{FGN-14}
\|\Theta\|_{\cB(H^{1/2}(\dOm),H^{-1/2}(\dOm))}
\leq C\|\theta\|_{L^p(\dOm;d^{n-1}\omega)},
\end{equation}
for some finite constant $C=C(\Om,n,p) > 0$. In particular, the present situation 
$\Theta=M_\theta$ subordinates to the case $\Theta^{(2)}$ described in \eqref{Filo-2}.
\end{lemma}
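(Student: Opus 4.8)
The plan is to establish \eqref{FGN-13}, \eqref{FGN-14}, and the self-adjointness of $\Theta = M_\theta$ by factoring the multiplication operator through the Dirichlet trace space $L^2(\dOm; d^{n-1}\omega)$ and then exploiting Sobolev embeddings on the $(n-1)$-dimensional Lipschitz manifold $\dOm$. First I would recall that $H^{1/2}(\dOm)$ embeds into $L^q(\dOm; d^{n-1}\omega)$ for an appropriate range of $q$: since $\dOm$ is an $(n-1)$-dimensional Lipschitz boundary, the critical Sobolev exponent for $H^{1/2}(\dOm)$ is $q^\ast = 2(n-1)/(n-2)$ when $n > 2$, and $H^{1/2}(\dOm) \hookrightarrow L^q(\dOm; d^{n-1}\omega)$ for every $q \in [1, \infty)$ when $n = 2$. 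Moreover — and this is the key point for compactness — these embeddings are \emph{compact} for every subcritical exponent $q < q^\ast$ (and for all $q < \infty$ when $n = 2$). This is a standard Rellich--Kondrachov-type statement on compact Lipschitz manifolds.

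The main computation is then a Hölder estimate. Given $f, g \in H^{1/2}(\dOm)$, one writes
\begin{equation}
\big|\langle f, M_\theta g\rangle_{1/2}\big|
= \bigg|\int_{\dOm} d^{n-1}\omega\, \ol{f}\, \theta\, g\bigg|
\leq \|\theta\|_{L^p(\dOm; d^{n-1}\omega)} \|f\|_{L^{q}(\dOm; d^{n-1}\omega)} \|g\|_{L^{q}(\dOm; d^{n-1}\omega)},
\end{equation}
where $2/q + 1/p = 1$. With the choice $p = n-1$ for $n > 2$ one gets $q = 2(n-1)/(n-2) = q^\ast$; but a routine bootstrap allows one to work instead with a slightly smaller $\tilde q < q^\ast$ at the cost of enlarging $p$ infinitesimally, or — more cleanly — one observes that it suffices to prove the estimate with $\theta$ first taken in $C(\dOm)$ (for which boundedness $H^{1/2} \to H^{-1/2}$ is immediate), establish the bound \eqref{FGN-14} on this dense subclass, and then pass to the $L^p$-limit; the norm inequality \eqref{FGN-14} persists under this limit, giving $\Theta \in \cB(H^{1/2}(\dOm), H^{-1/2}(\dOm))$ for general $\theta \in L^p$. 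Combined with the Sobolev embedding $H^{1/2}(\dOm) \hookrightarrow L^q(\dOm; d^{n-1}\omega)$, this yields exactly \eqref{FGN-14}.

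For compactness \eqref{FGN-13}: approximate $\theta$ in $L^p(\dOm; d^{n-1}\omega)$ by bounded functions $\theta_k \in L^\infty(\dOm; d^{n-1}\omega)$. For each $k$, the operator $M_{\theta_k} : H^{1/2}(\dOm) \to H^{-1/2}(\dOm)$ factors as $H^{1/2}(\dOm) \hookrightarrow L^2(\dOm; d^{n-1}\omega) \xrightarrow{\,\theta_k\,} L^2(\dOm; d^{n-1}\omega) \hookrightarrow H^{-1/2}(\dOm)$, and since $H^{1/2}(\dOm) \hookrightarrow L^2(\dOm; d^{n-1}\omega)$ is compact (subcritical embedding), $M_{\theta_k} \in \cB_\infty(H^{1/2}(\dOm), H^{-1/2}(\dOm))$. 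By \eqref{FGN-14}, $\|M_\theta - M_{\theta_k}\|_{\cB(H^{1/2}(\dOm), H^{-1/2}(\dOm))} \leq C\|\theta - \theta_k\|_{L^p(\dOm; d^{n-1}\omega)} \to 0$, so $\Theta = M_\theta$ is a norm-limit of compact operators, hence compact. Self-adjointness (in the sense of \eqref{B.5}, i.e., as a map $H^{1/2}(\dOm) \to H^{-1/2}(\dOm) = (H^{1/2}(\dOm))^\ast$) follows because $\theta$ is real-valued $d^{n-1}\omega$-a.e., so $\langle f, M_\theta g\rangle_{1/2} = \int \ol{f}\,\theta\,g\, d^{n-1}\omega = \ol{\int \ol{g}\,\theta\, f\, d^{n-1}\omega} = \ol{\langle g, M_\theta f\rangle_{1/2}}$, first on the dense class $C(\dOm)$ of $\theta$'s and then in the limit. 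Finally, \eqref{Filo-2} is immediate: with $\Theta^{(1)} = \Theta^{(3)} = 0$ and $\Theta^{(2)} = M_\theta$, Hypothesis \ref{h3.2} is satisfied.

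The main obstacle is the borderline nature of the exponent $p = n-1$, which forces the Sobolev embedding $H^{1/2}(\dOm) \hookrightarrow L^{2(n-1)/(n-2)}(\dOm; d^{n-1}\omega)$ to be used at its critical exponent — where it is bounded but \emph{not} compact. The resolution, as indicated above, is to decouple the two claims: prove the \emph{boundedness} bound \eqref{FGN-14} at the critical exponent (or via density of $C(\dOm)$), and prove \emph{compactness} by approximation with bounded multipliers, using only the non-critical (hence compact) embedding $H^{1/2}(\dOm) \hookrightarrow L^2(\dOm; d^{n-1}\omega)$ in the factorization. Verifying the precise Sobolev and Rellich--Kondrachov statements on a Lipschitz boundary — which are collected in Appendix \ref{sA} — is the only technical input, and the borderline exponent accounting (checking $2/q + 1/p = 1$ with $q \le q^\ast$) is the one place where care is genuinely needed.
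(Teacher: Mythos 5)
Your proof is correct and matches the natural approach that the cited reference \cite{GM09} uses: H\"older together with the (critical) Sobolev embedding $H^{1/2}(\dOm)\hookrightarrow L^{2(n-1)/(n-2)}(\dOm;d^{n-1}\omega)$ gives \eqref{FGN-14}, and compactness is obtained by approximating $\theta$ in $L^p$ by bounded functions, factoring $M_{\theta_k}$ through the compact embedding $H^{1/2}(\dOm)\hookrightarrow L^2(\dOm;d^{n-1}\omega)$, and passing to the operator-norm limit via \eqref{FGN-14}. The decoupling you emphasize --- boundedness at the critical exponent, compactness only through the subcritical $L^2$-embedding --- is precisely the right way to handle the borderline case $p=n-1$.

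One small remark on exposition: the two alternatives you mention for the bound \eqref{FGN-14} (shrinking $q$ below $q^\ast$ at the cost of enlarging $p$, or passing through dense $\theta\in C(\dOm)$) do not actually sidestep the critical exponent. The hypothesis fixes $p=n-1$ exactly, and even for continuous $\theta$ the constant in \eqref{FGN-14} must be $C\|\theta\|_{L^{n-1}}$, not $C\|\theta\|_{L^\infty}$, so the critical H\"older--Sobolev computation cannot be avoided. Since you also give the direct computation at $q=q^\ast$, this does not create a gap, but the aside is a red herring. You are also right that self-adjointness in the sense of \eqref{B.5} tacitly requires $\theta$ real-valued $d^{n-1}\omega$-a.e., which is implicit in the lemma's conclusion.
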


It is worth noting that the conditions isolated in Hypothesis \ref{h3.2} permit one to go beyond the 
assumption $\theta\in L^{\infty}(\dOm;d^{n-1}\omega)$ one frequently finds in the literature 
in connection with local Robin boundary conditions and hence naturally lead to the $L^p$-conditions 
in Lemma \ref{l3.6}.   

In the case $\Theta=M_\theta$ described in Lemma \ref{l3.6}, the underlying 
sesquilinear form and operator will be denoted by $\gQ_{\theta, \Om}$ and $L_{\theta, \Om}$. 

The $L^2$-realization of $L$ equipped with a Dirichlet boundary condition, 
$L_{D,\Om}$, in $L^2(\Om;d^n x)^m$ formally corresponds to $\Theta = \infty$ 
and so we isolate it in the next result.
 
\begin{theorem}\lb{t3.6BBB}
Assume Hypothesis \ref{h.MM}. Then $L_{D,\Om}$, the version of $L$ 
equipped with a Dirichlet boundary condition in $L^2(\Om;d^nx)^m$, defined by 
\begin{align}
& L_{D,\Om} = L,   \no \\
& \; \dom(L_{D,\Om}) =
\big\{u\in H^1(\Om)^m\,\big|\, Lu \in L^2(\Om;d^n x)^m, \,
\gamma_D u =0 \text{ in $H^{1/2}(\dOm)^m$}\big\}    \lb{3.14BBB} \\
& \hspace*{1.9cm} = \big\{u\in H_0^1(\Om)^m\,\big|\, Lu \in L^2(\Om;d^n x)^m\big\}, \no 
\end{align} 
is self-adjoint and strictly positive. Moreover, 
\begin{equation}
\dom\big((L_{D,\Om})^{1/2}\big) = H^1_0(\Om)^m,   \lb{3.15BBB}
\end{equation} 
and since $\Om$ is open and bounded, $L_{D,\Om}$ has
purely discrete spectrum contained in $(0,\infty)$, in particular, 
\begin{equation}
\sigma_{\rm ess}(L_{D,\Om})=\emptyset.    \lb{3.15aBBB} 
\end{equation} 
Finally, $L_{D,\Om}$ is the operator uniquely associated with the sesquilinear form  
\begin{equation} \lb{3.15bBBB}
\gQ_{D,\Om} (u,v):=\int_{\Om} d^nx\,\langle\ol{(Du)(x)},A(x)(Dv)(x)\rangle,
\quad u,v\in H_0^1(\Om)^m.
\end{equation}
\end{theorem}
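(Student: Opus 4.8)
\emph{Proof proposal.} The plan is to follow the blueprint of the proofs of Theorems \ref{t3.4} and \ref{t3.5BBB}, the Dirichlet case being structurally simpler because the relevant form domain is $H_0^1(\Om)^m$ and hence all boundary trace terms drop out. First I would record the basic properties of the sesquilinear form $\gQ_{D,\Om}$ in \eqref{3.15bBBB}, viewed on $H_0^1(\Om)^m\times H_0^1(\Om)^m$. Its symmetry follows from the symmetry condition \eqref{symm-22}; its $H^1(\Om)^m$-boundedness from $|\gQ_{D,\Om}(u,v)|\leq\|A\|_{L^{\infty}(\Om;d^nx)^{m\times m}}\|Du\|_{\LOm^m}\|Dv\|_{\LOm^m}$ via \eqref{A-M2}. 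For the lower bound I would use the strong Legendre ellipticity condition \eqref{strongellipticity} (together with \eqref{symm-22}, which renders $\gQ_{D,\Om}(u,u)$ real) to obtain $\gQ_{D,\Om}(u,u)\geq a_0\|Du\|_{\LOm^m}^2$, and then invoke the Poincar\'e--Friedrichs inequality on the bounded domain $\Om$, namely the existence of $c_\Om>0$ with $\|u\|_{\LOm^m}^2\leq c_\Om\|Du\|_{\LOm^m}^2$ for all $u\in H_0^1(\Om)^m$; this yields $\gQ_{D,\Om}(u,u)\geq(a_0/c_\Om)\|u\|_{\LOm^m}^2$ and, simultaneously, the equivalence of the form norm with $\|\cdot\|_{H^1(\Om)^m}$ on $H_0^1(\Om)^m$. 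Since $H_0^1(\Om)^m$ is closed in $H^1(\Om)^m$, the form $\gQ_{D,\Om}$ is thereby closed, and it is densely defined in $\LOm^m$ because $C_0^\infty(\Om)^m\subset H_0^1(\Om)^m$.

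Given these properties, the theory of sesquilinear forms and their associated operators (cf.\ Appendix \ref{sB}) furnishes a unique self-adjoint operator $L_{D,\Om}$ in $\LOm^m$ associated with $\gQ_{D,\Om}$, with $L_{D,\Om}\geq(a_0/c_\Om)I_\Om>0$ (hence strictly positive) and with form domain $\dom\big((L_{D,\Om})^{1/2}\big)=H_0^1(\Om)^m$, which is \eqref{3.15BBB}. Concretely (the analog of \eqref{2.31}), $u\in\dom(L_{D,\Om})$ precisely when $u\in H_0^1(\Om)^m$ and there exists $w_u\in\LOm^m$ with $\int_{\Om}d^nx\,\langle\ol{Dw},A(Du)\rangle=\int_{\Om}d^nx\,\ol{w}\cdot w_u$ for all $w\in H_0^1(\Om)^m$, in which case $L_{D,\Om}u=w_u$.

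To identify this domain with the explicit one in \eqref{3.14BBB} I would argue as in the proof of Theorem \ref{t3.5BBB}, but more directly. For one inclusion, testing the form identity against $v\in C_0^\infty(\Om)^m$ forces $w_u=Lu$ in $(\cD(\Om)^m)'$, and since $u\in H_0^1(\Om)^m$ one has $\ga_D u=0$ and $Lu=w_u\in\LOm^m$; the two descriptions of the set in \eqref{3.14BBB} agree by \eqref{H-zer}. For the reverse inclusion, if $u\in H_0^1(\Om)^m$ with $Lu\in\LOm^m$, then the distributional identity $\int_{\Om}d^nx\,\langle\ol{D\psi},A(Du)\rangle=\int_{\Om}d^nx\,\ol{\psi}\cdot Lu$ for $\psi\in C_0^\infty(\Om)^m$ (the meaning of $Lu$ as a distribution, legitimate since $A\in L^{\infty}(\Om;d^nx)^{m\times m}$ forces $A(Du)\in\LOm^m$) extends, by density of $C_0^\infty(\Om)^m$ in $H_0^1(\Om)^m$ together with $Lu\in\LOm^m$, to all $w\in H_0^1(\Om)^m$; hence $w_u:=Lu$ is admissible and $u\in\dom(L_{D,\Om})$. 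Alternatively, one may invoke the weak Green formula underlying \eqref{2.33a} with $\ga_D w=0$.

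Finally, since $\Om$ is bounded the embedding $H_0^1(\Om)^m\hookrightarrow\LOm^m$ is compact (Rellich's theorem, which requires no regularity of $\dOm$ for the subspace $H_0^1$), so the resolvent of $L_{D,\Om}$ belongs to $\cB_\infty(\LOm^m)$; consequently $\sigma_{\rm ess}(L_{D,\Om})=\emptyset$, the spectrum is purely discrete, and strict positivity confines it to $(0,\infty)$. The only mildly delicate ingredients are the Poincar\'e-based coercivity/closedness and the density argument in the domain identification; I do not expect a genuine obstacle, since the Dirichlet case is strictly simpler than the Robin case already treated in Theorem \ref{t3.5BBB}.
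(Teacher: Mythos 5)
Your proposal is correct and follows essentially the same route as the paper, which itself only says the argument is ``largely proved as in the case of Theorem \ref{t3.5BBB}'' and then singles out the compactness fact $H_0^1(\Om)^m\hookrightarrow\LOm^m$ for arbitrary bounded open $\Om$. You supply exactly the details the paper omits: symmetry and $H^1$-boundedness from \eqref{symm-22} and \eqref{A-M2}; coercivity and strict positivity from \eqref{strongellipticity} together with the Poincar\'e--Friedrichs inequality (the latter being a genuinely new ingredient relative to Theorem \ref{t3.5BBB}, since that theorem only asserts boundedness from below, while here strict positivity is claimed); the representation theorem of Appendix \ref{sB} to obtain the self-adjoint operator and \eqref{3.15BBB}; the domain identification by testing against $C_0^\infty(\Om)^m$ and extending by density, with all boundary terms dropping out since $\gamma_D w=0$ on $H_0^1(\Om)^m$; and Rellich compactness for \eqref{3.15aBBB}. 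No gaps.
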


This is largely proved as in the case of Theorem \ref{t3.5BBB}, and hence we omit the argument.
Here we only wish to note that \eqref{3.15aBBB} follows from \eqref{3.15BBB} since 
$H^1_0(\Om)^m$ embeds compactly into $\LOm^m$; the latter fact actually 
holds for arbitrary open, bounded sets $\Om\subset\bbR^n$ (see, e.g., \cite[Theorem V.4.18]{EE89}).

By specializing Theorem \ref{t3.6BBB} to the situation when $m=1$ and $L= -\Delta$ yields 
the following corollary (cf.\ also \cite{GLMZ05}, \cite{GMZ07} and \cite{GM09}
for related results):

\begin{corollary} \lb{t3.6}
Assume Hypothesis \ref{h3.1}. Then $-\Delta_{D,\Om}$, the Dirichlet Laplacian 
in $L^2(\Om;d^n x)$ defined by
\begin{align}
& -\Delta_{D,\Om} = -\Delta,   \no \\
& \; \dom(-\Delta_{D,\Om}) =
\big\{u\in H^1(\Om)\,\big|\, \Delta u \in L^2(\Om;d^n x), \,
\gamma_D u =0 \text{ in $H^{1/2}(\dOm)$}\big\}    \lb{3.14} \\
& \hspace*{2.13cm} = \big\{u\in H_0^1(\Om)\,\big|\, \Delta u \in L^2(\Om;d^n x)\big\},  \no 
\end{align}
is self-adjoint and strictly positive. Moreover,
\begin{equation}
\dom\big((-\Delta_{D,\Om})^{1/2}\big) = H^1_0(\Om),   \lb{3.15}
\end{equation}
and since $\Om$ is open and bounded, $-\Delta_{D,\Om}$ has
purely discrete spectrum contained in $(0,\infty)$, in particular,
\begin{equation}
\sigma_{\rm ess}(-\Delta_{D,\Om})=\emptyset.    \lb{3.15a} 
\end{equation}
Finally, $-\Delta_{D,\Om}$ is the operator uniquely associated with the sesquilinear form 
\begin{equation} \lb{3.15b}
\gq_{D,\Om} (u,v):=\int_{\Om} d^nx\,\ol{(\nabla u)(x)}\cdot(\nabla v)(x),
\quad u,v\in H_0^1(\Om).
\end{equation}
\end{corollary}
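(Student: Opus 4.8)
The plan is to obtain Corollary~\ref{t3.6} as the special case $m=1$, $a_{j,k}=\delta_{j,k}$ (the Kronecker symbol) of Theorem~\ref{t3.6BBB}, supplemented by two standard facts about bounded open, resp.\ bounded Lipschitz, domains. First I would verify that the constant tensor $A=\bigl(\delta_{j,k}\bigr)_{1\leq j,k\leq n}$ satisfies Hypothesis~\ref{h.MM}: one has $A\in L^{\infty}(\Om;d^nx)^{n\times n}$ trivially; the strong Legendre ellipticity \eqref{strongellipticity} holds with $a_0=1$, since in this case $\Re\,\langle\overline{\zeta},A\zeta\rangle=|\zeta|^2$ for every $\zeta\in\bbC^n$; and the symmetry \eqref{symm-22} reduces to $\delta_{j,k}=\overline{\delta_{k,j}}$, which is evident. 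With Hypothesis~\ref{h.MM} in force, Theorem~\ref{t3.6BBB} immediately yields that $-\Delta_{D,\Om}$ is self-adjoint and strictly positive, that it is the operator associated (via the first representation theorem, cf.\ Appendix~\ref{sB}) with the form $u,v\mapsto\int_{\Om}d^nx\,\langle\overline{(Du)(x)},(Dv)(x)\rangle=\int_{\Om}d^nx\,\overline{(\nabla u)(x)}\cdot(\nabla v)(x)=\gq_{D,\Om}(u,v)$ on $H^1_0(\Om)$, and that \eqref{3.15} holds, i.e., $\dom\big((-\Delta_{D,\Om})^{1/2}\big)=H^1_0(\Om)$.

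It then remains to record the quantitative positivity and the spectral assertions. For strict positivity I would invoke the Poincar\'e inequality, valid on $H^1_0(\Om)$ for every bounded open set $\Om$: there is $c_\Om>0$ with $\gq_{D,\Om}(u,u)=\|\nabla u\|^2_{L^2(\Om;d^nx)^n}\geq c_\Om\|u\|^2_{\LOm}$ for all $u\in H^1_0(\Om)$, whence $-\Delta_{D,\Om}\geq c_\Om I_\Om$. For the purely discrete spectrum I would use that $H^1_0(\Om)\hookrightarrow\LOm$ compactly for arbitrary bounded open $\Om$ (cf.\ \cite[Theorem~V.4.18]{EE89}); since the form domain of $-\Delta_{D,\Om}$ equals $H^1_0(\Om)$, which embeds compactly into the underlying Hilbert space $\LOm$, the resolvent of $-\Delta_{D,\Om}$ is compact, so its spectrum is purely discrete and hence $\sigma_{\rm ess}(-\Delta_{D,\Om})=\emptyset$; together with $-\Delta_{D,\Om}\geq c_\Om I_\Om$ this forces $\sigma(-\Delta_{D,\Om})\subset(0,\infty)$.

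Finally, for the domain: by the first representation theorem, $u\in\dom(-\Delta_{D,\Om})$ if and only if $u\in H^1_0(\Om)$ and there is $w\in\LOm$ with $\gq_{D,\Om}(v,u)=(v,w)_{\Om}$ for all $v\in H^1_0(\Om)$. Testing against $v\in C_0^\infty(\Om)$ identifies $w=-\Delta u$ in $\cD'(\Om)$, so $\Delta u\in\LOm$. Conversely, if $u\in H^1_0(\Om)$ with $\Delta u\in\LOm$, then $\gq_{D,\Om}(v,u)=-(v,\Delta u)_{\Om}$ holds first for $v\in C_0^\infty(\Om)$ by integration by parts, and then for all $v\in H^1_0(\Om)$ because both sides are continuous in $v$ with respect to the $H^1(\Om)$-norm---for the right-hand side this uses precisely that $\Delta u\in\LOm$---and $C_0^\infty(\Om)$ is dense in $H^1_0(\Om)$; hence $u\in\dom(-\Delta_{D,\Om})$. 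This establishes the second description in \eqref{3.14}, and the first one then follows from the characterization $H^1_0(\Om)=\{u\in H^1(\Om)\,|\,\gamma_D u=0\text{ in }H^{1/2}(\dOm)\}$, valid for bounded Lipschitz domains (Appendix~\ref{sA}); note that, in contrast to the Neumann and Robin cases, no weak Neumann trace condition enters, since the form domain $H^1_0(\Om)$ is already a proper closed subspace of $H^1(\Om)$ encoding the Dirichlet condition as an \emph{essential} boundary condition. The only steps needing genuine care---rather than a bare citation---are the density/integration-by-parts passage just described and the identity $H^1_0(\Om)=\ker\gamma_D$, which really exploits the Lipschitz regularity of $\dOm$; everything else is an immediate specialization of Theorem~\ref{t3.6BBB} or a standard property of bounded open sets.
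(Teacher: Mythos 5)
Your proposal follows exactly the paper's route: Corollary~\ref{t3.6} is obtained by specializing Theorem~\ref{t3.6BBB} to $m=1$ and $A=I_n$, after checking that the identity tensor trivially satisfies Hypothesis~\ref{h.MM}. The extra detail you supply (Poincar\'e inequality for strict positivity, compact embedding $H^1_0(\Om)\hookrightarrow\LOm$ for discreteness of the spectrum, the density/integration-by-parts argument identifying the operator domain, and the characterization $H^1_0(\Om)=\ker\gamma_D$ for Lipschitz $\Om$) is all correct and simply unpacks the content that the paper leaves implicit in its one-line citation of Theorem~\ref{t3.6BBB}.
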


\section{Gaussian Heat Kernel Bounds for Divergence Form Elliptic 
PDOs with Robin-Type Boundary Conditions}  \lb{s4}

In this section we apply the abstract results of Section \ref{s2} to the concrete cases involving elliptic 
partial differential operators in divergence form in $L^2(\Omega; d^n x)$ on bounded, connected Lipschitz 
domains $\Omega$ with Robin-type boundary conditions on $\partial\Omega$ studied in 
Section \ref{s3}. 

Throughout this section we consider the scalar case $m=1$ and assume Hypothesis \ref{h3.2} 
whenever a (nonlocal) Robin boundary condition is involved. Moreover, we introduce the following assumption:

\begin{hypothesis} \lb{h4.1} Let $n\in\bbN$, $n\geq 2$. \\
$(i)$ Assume that $\Om\subset{\bbR}^n$ is a bounded Lipschitz domain $($cf.\ Appendix \ref{sA}$)$. \\
$(ii)$ Suppose that the matrix $A(\cdot)$ is Lebesgue measurable and real symmetric a.e.\ on $\Om$. 
In addition, given $0 < a_0 < a_1 < \infty$, assume that $A$ satisfies the uniform ellipticity conditions
\begin{equation}
a_0 I_n \leq A(x) \leq a_1 I_n \, \text{ for a.e.\ $x \in \Om$.}
\end{equation} 
\end{hypothesis}

Here $I_n$ represents the identity matrix in $\bbC^n$.

Given the basic setup for divergence form elliptic partial differential operators $L_{\Theta,\Om}$, 
with nonocal Robin boundary conditions developed in Section \ref{s3}, we can now turn to (Gaussian) 
heat kernel and Green's function bounds for $L_{\Theta,\Om}$ (and hence for the special case 
$-\Delta_{\Theta,\Om}$), and subsequently also for the corresponding 
Schr\"odinger-type operators. 

We will use the following heat kernel notation (for $t > 0$, a.e.\ $x, y \in \Om$)
\begin{align}
\begin{split}
&K_{\Theta,\Om} (t,x,y) =  e^{- t L_{\Theta, \Omega}}(x,y),  \quad 
K_{N,\Om} (t,x,y) =  e^{- t L_{N, \Omega}}(x,y),      \\
&K_{D,\Om} (t,x,y) =  e^{- t L_{D, \Omega}}(x,y),       
\end{split}
\end{align}
and similarly for Green's functions (for $z \in\bbC\backslash \bbR$, a.e.\ $x, y \in \Om$), 
\begin{align}
&G_{\Theta,\Om} (z,x,y) = (L_{\Theta, \Omega} - z I_{\Om})^{-1}(x,y),    \quad 
G_{N,\Om} (z,x,y) = (L_{N, \Omega} - z I_{\Om})^{-1}(x,y),    \no \\
&G_{D,\Om} (z,x,y) = (L_{D, \Omega} - z I_{\Om})^{-1}(x,y), \quad x \neq y. 
\end{align}

Next, we need one more preparatory result which identifies $L_{D,\Om}$ as the strong resolvent limit 
of $L_{\theta,\Om}$ as $\theta\uparrow\infty$. For this purpose we recall an ordering $\lessdot$ on the set of nonnegative quadratic forms in a complex, separable Hilbert space $\cH$ as follows.  If $\mathfrak{t}_1$ and $\mathfrak{t}_2$ are two non-negative quadratic forms in $\cH$, then 
\begin{equation} 
\mathfrak{t}_2\lessdot \mathfrak{t}_1 \, \text{ if and only if } \, 
\dom(\mathfrak{t}_1)\subseteq \dom(\mathfrak{t}_2) \, \text{ and } \, 
\mathfrak{t}_2(u)\leq \mathfrak{t}_1(u), \quad u\in \dom(\mathfrak{t}_1).     \lb{1f}
\end{equation}

Monotonically increasing sequences of non-negative quadratic forms have limits.  More precisely, 
one has the following fact (cf.\ \cite[Lemma\ 5.2.13]{BR02}, \cite[Ch.\,VIII, Theorem\ 3.13a]{Ka80}, 
\cite{Si77a}, \cite{Si78}):

\begin{lemma}\lb{l1f}
Suppose that $\{\mathfrak{t}_n\}_{n\in \bbN}$ denotes a monotonically increasing sequence $($with respect to the ordering $\lessdot$ for quadratic forms introduced in \eqref{1f}$)$ of non-negative, closed quadratic forms in a complex, separable Hilbert space $\cH$.  Then the following items $(i)$ and $(ii)$ hold.\\
$(i)$   $\mathfrak{t}$ defined by
\begin{align}
\begin{split} 
& \mathfrak{t}(u)= \sup_{n\in \bbN}\mathfrak{t}_n(u),     \lb{2f}\\
& \, u \in \dom(\mathfrak{t}) = \bigg\{u\in \cH\, \bigg| \, u\in\bigcap_{n\in \bbN}\dom(\mathfrak{t}_n), \; \sup_{n\in \bbN}\mathfrak{t}_n(u)<\infty \bigg\},   
\end{split} 
\end{align}
is a non-negative, closed quadratic form in $\cH$, and one has
\begin{equation}\lb{3f}
\lim_{n\rightarrow \infty} \mathfrak{t}_n(u,v)=\mathfrak{t}(u,v), \quad u,v\in \dom(\mathfrak{t}),
\end{equation}
where $\mathfrak{t}_n(\cdot,\cdot)$, $n\in \bbN$ $($resp., $\mathfrak{t}(\cdot,\cdot)$$)$ is the sesquilinear form defined from the quadratic form $\mathfrak{t}_n$, $n\in \bbN$ $($resp., $\mathfrak{t}$$)$ via polarization. \\
$(ii)$  Suppose in addition that $\mathfrak{t}$ $($and therefore each $\mathfrak{t}_n$, $n\in \bbN$$)$ is densely defined.  If $T_n$, $n\in \bbN$, $($resp., $T$$)$ denotes the unique densely defined, non-negative, self-adjoint operator associated to the sesquilinear form $\mathfrak{t}_n(\cdot,\cdot)$ $($resp., $\mathfrak{t}(\cdot,\cdot)$$)$ by the First Representation Theorem, then 
\begin{equation}\lb{4f}
\slim_{n\rightarrow \infty}\, (T_n-\lambda I_{\cH})^{-1}=(T-\lambda I_{\cH})^{-1}, \quad \lambda<0.
\end{equation}
In particular,
\begin{equation}\lb{5f}
\slim_{n\rightarrow \infty}\, e^{-tT_n}=e^{-tT}, \quad t\geq 0.
\end{equation}
\end{lemma}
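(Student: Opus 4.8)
\emph{Proof plan.} The statement is the classical monotone–convergence theorem for quadratic forms, and the plan is to derive it directly from the tools already assembled rather than merely invoking the cited references. For part $(i)$, I would first dispose of the easy points: non-negativity of $\mathfrak{t}$ is immediate from $\mathfrak{t}_n\geq 0$, and \eqref{3f} follows from closedness together with the observation that for $u\in\dom(\mathfrak{t})$ the sequence $\mathfrak{t}_n(u)$ is monotone and bounded, hence converges to $\sup_n\mathfrak{t}_n(u)=\mathfrak{t}(u)$, the sesquilinear version then coming by polarization (using $u\pm v,\,u\pm iv\in\dom(\mathfrak{t})$ whenever $u,v\in\dom(\mathfrak{t})$). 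The substance is closedness. I would equip each $\dom(\mathfrak{t}_n)$ with the form norm $\|w\|_n^2:=\mathfrak{t}_n(w)+\|w\|_{\cH}^2$, which makes it a Hilbert space because $\mathfrak{t}_n$ is closed, and note that by monotonicity $\dom(\mathfrak{t}_{n+1})\subseteq\dom(\mathfrak{t}_n)$ with $\|\cdot\|_n\leq\|\cdot\|_{n+1}$ there. A sequence $\{w_k\}$ that is Cauchy in the form norm of $\mathfrak{t}$ is then Cauchy in every $\|\cdot\|_n$, hence converges in each $\dom(\mathfrak{t}_n)$, and all these limits agree with a single $w\in\cH$ by uniqueness of $\cH$-limits; a routine $\varepsilon$-argument (from $\|w_k-w_l\|_{\mathfrak{t}}\leq\varepsilon$ one gets $\|w_k-w_l\|_n\leq\varepsilon$ for all $n$, then lets $l\to\infty$ and finally takes $\sup_n$) yields $w\in\dom(\mathfrak{t})$ and $\|w_k-w\|_{\mathfrak{t}}\to 0$. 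Density of $\dom(\mathfrak{t})$ is hypothesized in $(ii)$, so I need not address it.

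For part $(ii)$, fix $\lambda<0$. The ordering $\mathfrak{t}_n\lessdot\mathfrak{t}_{n+1}\lessdot\mathfrak{t}$ translates through the First Representation Theorem into $0\leq T_n\leq T_{n+1}\leq T$ in the form sense, hence $(T_n-\lambda I_{\cH})^{-1}\geq(T_{n+1}-\lambda I_{\cH})^{-1}\geq(T-\lambda I_{\cH})^{-1}\geq 0$. A monotone decreasing sequence of bounded non-negative self-adjoint operators converges strongly, so $R_\lambda:=\slim_{n\to\infty}(T_n-\lambda I_{\cH})^{-1}$ exists, and the whole problem reduces to proving $R_\lambda=(T-\lambda I_{\cH})^{-1}$.

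This identification is where I expect the real work to lie. The plan is to fix $u\in\cH$, set $u_n:=(T_n-\lambda I_{\cH})^{-1}u$ (so $u_n\to w:=R_\lambda u$ in $\cH$), and exploit a variational characterization: the coercive closed form $\mathfrak{q}_n(\cdot):=\mathfrak{t}_n(\cdot)-\lambda\|\cdot\|_{\cH}^2\geq|\lambda|\,\|\cdot\|_{\cH}^2$ has $u_n$ as unique minimizer of the energy functional $F_n(v):=\mathfrak{q}_n(v)-2\,\Re(u,v)_{\cH}$ over $\dom(\mathfrak{t}_n)$, which gives the identity $F_n(v)=F_n(u_n)+\mathfrak{q}_n(v-u_n)$ with $F_n(u_n)=-(u,u_n)_{\cH}$. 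Comparing the minimizers for indices $n'\geq n$ and using $\mathfrak{q}_n\leq\mathfrak{q}_{n'}$ on $\dom(\mathfrak{t}_{n'})$ yields the key estimate $\mathfrak{t}_n(u_{n'}-u_n)\leq\mathfrak{q}_n(u_{n'}-u_n)\leq(u,u_n-u_{n'})_{\cH}\to 0$. From this I read off, for each fixed $m$, that $\{u_n\}$ is Cauchy in the Hilbert space $(\dom(\mathfrak{t}_m),\|\cdot\|_m)$ with limit necessarily $w$, so $w\in\dom(\mathfrak{t}_m)$ and $\mathfrak{t}_m(w)\leq\sup_n\mathfrak{t}_n(u_n)\leq\|u\|_{\cH}^2/|\lambda|$ uniformly in $m$, whence $w\in\dom(\mathfrak{t})$; letting $n'\to\infty$ at fixed $n$ in the same estimate also gives $\mathfrak{t}_n(w-u_n)\to 0$. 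Finally I would pass to the limit in $\mathfrak{t}_n(u_n,v)-\lambda(u_n,v)_{\cH}=(u,v)_{\cH}$ for fixed $v\in\dom(\mathfrak{t})$, controlling the cross term via $|\mathfrak{t}_n(u_n-w,v)|\leq\mathfrak{t}_n(u_n-w)^{1/2}\,\mathfrak{t}(v)^{1/2}\to 0$ and using \eqref{3f} for $\mathfrak{t}_n(w,v)\to\mathfrak{t}(w,v)$, obtaining $\mathfrak{t}(w,v)-\lambda(w,v)_{\cH}=(u,v)_{\cH}$ for all $v\in\dom(\mathfrak{t})$; by the First Representation Theorem this says $w=(T-\lambda I_{\cH})^{-1}u$, which is \eqref{4f}. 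The genuine obstacle here is exactly that the naive limit in the weak form of the resolvent equation does not a priori produce an element of $\dom(\mathfrak{t})$: it is the energy-minimization identity that upgrades $\cH$-convergence of the $u_n$ to convergence in the form topology and thereby makes the limiting argument legitimate.

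For \eqref{5f}, I would deduce it from \eqref{4f} in the standard way. Since each $T_n\geq 0$, the $e^{-tT_n}$ are contraction semigroups, and strong resolvent convergence of self-adjoint operators implies strong convergence of the associated contraction semigroups, uniformly for $t$ in compact subsets of $[0,\infty)$ (Trotter--Kato). Alternatively, one may observe that $x\mapsto e^{-tx}$ is operator monotone decreasing, so $e^{-tT_n}$ decreases strongly to some $S_t\geq 0$, and then use the Laplace transform representation \eqref{2.25}, dominated convergence (the $e^{-tT_n}$ being uniformly bounded by $1$), and uniqueness of the Laplace transform to identify $S_t=e^{-tT}$. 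Everything beyond the identification of $R_\lambda$ is bookkeeping, and the result is of course also contained in \cite[Lemma\ 5.2.13]{BR02}, \cite[Ch.\ VIII, Theorem\ 3.13a]{Ka80}, \cite{Si77a}, \cite{Si78}.
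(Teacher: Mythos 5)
Your proposal is essentially correct and supplies a genuine proof where the paper gives none: the authors simply cite \cite[Lemma~5.2.13]{BR02} and \cite[Ch.~VIII, Theorem~3.13a]{Ka80} for item~$(i)$ and \eqref{4f}, and then obtain \eqref{5f} by extending \eqref{4f} to $\bbC\setminus\bbR$ by analyticity and invoking \cite[Theorem~9.18]{We80} (which is precisely the strong-resolvent-convergence-implies-strong-semigroup-convergence principle, i.e.\ Trotter--Kato). Your self-contained argument for~$(i)$---monotonicity of the form norms $\|\cdot\|_n$, completeness of each $(\dom(\mathfrak t_n),\|\cdot\|_n)$, and the $\varepsilon$-argument to upgrade Cauchy in $\|\cdot\|_{\mathfrak t}$ to convergence in $\dom(\mathfrak t)$---is sound, and your identification of $R_\lambda=(T-\lambda I_{\cH})^{-1}$ via the variational characterization $F_n(v)=F_n(u_n)+\mathfrak q_n(v-u_n)$, the resulting Cauchy estimate in each $\|\cdot\|_m$, the uniform bound $\mathfrak t_m(w)\leq\|u\|_{\cH}^2/|\lambda|$, and passage to the limit in $\mathfrak t_n(u_n,v)-\lambda(u_n,v)_{\cH}=(u,v)_{\cH}$ is exactly the classical argument (essentially the proof one finds in Kato). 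Your primary route from \eqref{4f} to \eqref{5f} (Trotter--Kato) agrees with what the paper implicitly invokes.

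One correction is needed, however: the ``alternative'' route you sketch for \eqref{5f} is wrong. The function $x\mapsto e^{-tx}$ is \emph{not} operator monotone decreasing on $(0,\infty)$ (equivalently, $-e^{-tx}$ is not a Pick function: its imaginary part on the upper half-plane is $e^{-tx}\sin(ty)$, which changes sign), and the form ordering $T_n\leq T_{n+1}$ does \emph{not} imply $e^{-tT_n}\geq e^{-tT_{n+1}}$ in the operator ordering; going from $(I+(t/k)T_{n+1})^{-1}\leq(I+(t/k)T_n)^{-1}$ to the $k$-th powers already fails, since $0\leq S\leq R$ does not give $S^k\leq R^k$. So there is no monotone limit $S_t$ to feed into the Laplace-transform/uniqueness argument, and that alternative should be discarded. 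Stick with the Trotter--Kato route, which is what the paper uses.
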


\noindent 
We note that item $(i)$ and \eqref{4f} are taken from \cite[Lemma 5.2.13]{BR02} and 
\cite[Ch.\,VIII, Theorem 3.13a]{Ka80}. By analyticity, \eqref{4f} extends to all 
$\lambda \in \bbC\backslash \bbR$ (cf., e.g., \cite[Theorem 9.15]{We80}),
\begin{equation}\lb{6f}
\slim_{n\rightarrow \infty}\, (T_n - z I_{\cH})^{-1}=(T - z I_{\cH})^{-1}, \quad z \in \bbC\backslash [0,\infty).
\end{equation}
Since $T_n$, $n\in \bbN$, and $T$ are non-negative, choosing $z=i \, (= \sqrt{-1})$ in \eqref{6f} and applying 
\cite[Theorem 9.18]{We80} yields \eqref{5f}.

The following result is mentioned in \cite{KR80} in the context of Dirichlet Laplacians and we thank 
Derek Robinson for helpful discussions concerning its proof:
 
\begin{lemma}\lb{l2f}
Assume Hypothesis \ref{h3.2} $($with $m=1$$)$ and denote by $L_{\vartheta,\Om}$ the operator 
$L_{\Theta,\Om}$ in the special case where $\Theta$ denotes the operator of multiplication, 
$M_{\vartheta}$, in $L^2(\partial\Om; d^{n-1} \omega)$ with the positive constant $\vartheta >0$. Then 
\begin{equation}\lb{7f}
\slim_{\vartheta \uparrow \infty} e^{- t L_{\vartheta, \Omega}} = e^{- t L_{D,\Omega}}, \quad t\geq 0.
\end{equation}
\end{lemma}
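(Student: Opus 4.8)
The plan is to realize $e^{-tL_{\vartheta,\Om}}$ through a monotonically increasing family of quadratic forms and then invoke Lemma \ref{l1f}. For $\vartheta>0$, the form associated with $L_{\vartheta,\Om}$ is
\[
\gQ_{\vartheta,\Om}(u)=\int_{\Om}d^nx\,\bigl\langle\ol{(\nabla u)(x)},A(x)(\nabla u)(x)\bigr\rangle
+\vartheta\,\|\gamma_D u\|^2_{L^2(\dOm;d^{n-1}\omega)},\qquad u\in H^1(\Om),
\]
and I would first record that this family satisfies the hypotheses needed: by Hypothesis \ref{h4.1}\,$(ii)$ one has $\gQ_{\vartheta,\Om}(u)\geq a_0\|\nabla u\|^2_{L^2(\Om)^n}\geq 0$, so each form is non-negative; by Theorem \ref{t3.4} (applied with $\Theta^{(1)}=\Theta^{(3)}=0$ and $\Theta^{(2)}=M_\vartheta$, which is legitimate since constants lie in every $L^p(\dOm;d^{n-1}\omega)$, cf.\ Lemma \ref{l3.6}) each form is closed; each is densely defined with domain $H^1(\Om)$; and for $0<\vartheta\leq\vartheta'$ one has $\gQ_{\vartheta,\Om}\lessdot\gQ_{\vartheta',\Om}$ in the sense of \eqref{1f}, since the domains coincide and $\vartheta\|\gamma_D u\|^2\leq\vartheta'\|\gamma_D u\|^2$.

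Next I would restrict to the sequence $\vartheta=n$, $n\in\bbN$, and apply Lemma \ref{l1f}\,$(i)$ to identify the limit form $\mathfrak t_\infty:=\sup_{n}\gQ_{n,\Om}$. Its domain is $\{u\in H^1(\Om)\mid\sup_n\gQ_{n,\Om}(u)<\infty\}$; because the principal part $\int_\Om\langle\ol{\nabla u},A\nabla u\rangle$ is independent of $n$, finiteness of the supremum is equivalent to $\sup_n n\|\gamma_D u\|^2_{L^2(\dOm)}<\infty$, i.e.\ to $\gamma_D u=0$, in which case $\gQ_{n,\Om}(u)$ is constant in $n$. Hence $\dom(\mathfrak t_\infty)=\{u\in H^1(\Om)\mid\gamma_D u=0\}=H^1_0(\Om)$ by the characterization \eqref{H-zer}, and $\mathfrak t_\infty(u)=\int_\Om\langle\ol{\nabla u},A\nabla u\rangle=\gQ_{D,\Om}(u)$ on that domain. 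Thus $\mathfrak t_\infty=\gQ_{D,\Om}$, which is densely defined (it contains $C_0^\infty(\Om)$) and, by Theorem \ref{t3.6BBB}, is precisely the form of $L_{D,\Om}$. Lemma \ref{l1f}\,$(ii)$ then gives $\slim_{n\to\infty}(L_{n,\Om}-\lambda I_\Om)^{-1}=(L_{D,\Om}-\lambda I_\Om)^{-1}$ for $\lambda<0$ and $\slim_{n\to\infty}e^{-tL_{n,\Om}}=e^{-tL_{D,\Om}}$, $t\geq 0$, via \eqref{5f}.

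Finally I would upgrade from the integer sequence to the continuous limit $\vartheta\uparrow\infty$: the operators $(L_{\vartheta,\Om}+I_\Om)^{-1}$, $\vartheta>0$, are non-negative contractions on $L^2(\Om;d^nx)$ that are monotone non-increasing in $\vartheta$ (a direct consequence of $\gQ_{\vartheta,\Om}\lessdot\gQ_{\vartheta',\Om}$); such a monotone bounded family of self-adjoint operators converges strongly as $\vartheta\uparrow\infty$, and its strong limit must coincide with the strong limit already computed along $\vartheta=n$, namely $(L_{D,\Om}+I_\Om)^{-1}$. Passing to the semigroup by the functional calculus exactly as in the remark following Lemma \ref{l1f} (using \cite[Theorem 9.18]{We80}) then yields \eqref{7f}. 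The only step that is not entirely mechanical is the identification in the second paragraph of $\{u\in H^1(\Om)\mid\gamma_D u=0\}$ with $H^1_0(\Om)$: this is \eqref{H-zer}, which holds for bounded Lipschitz domains but relies on the trace theory collected in Appendix \ref{sA}; everything else follows routinely from Lemma \ref{l1f}, Theorem \ref{t3.4}, and Theorem \ref{t3.6BBB}.
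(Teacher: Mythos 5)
Your proposal is correct and uses essentially the same strategy as the paper: monotonicity of the forms $\gQ_{\vartheta,\Om}$ in the ordering $\lessdot$, Lemma \ref{l1f} to identify the limit form with $\gQ_{D,\Om}$ via $H_0^1(\Om)=\{u\in H^1(\Om)\mid\gamma_D u=0\}$, and passage from resolvent to semigroup convergence. The only (inconsequential) difference is in the final step: the paper obtains the continuous limit $\vartheta\uparrow\infty$ simply by letting the monotone sequence $\{\vartheta_n\}$ be arbitrary, whereas you specialize to $\vartheta=n$ and then invoke monotone strong convergence of the resolvent net $(L_{\vartheta,\Om}+I_\Om)^{-1}$ to upgrade to continuous $\vartheta$, which is correct but slightly more elaborate than necessary.
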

\begin{proof}
Let $\{\vartheta_n\}_{n=1}^{\infty}$ denote an arbitrary sequence of positive real numbers satisfying
\begin{equation}\lb{8f}
0<\vartheta_n<\vartheta_{n+1}, \; n\in\bbN, \, \text{ and } \, 
\lim_{n\rightarrow \infty}\vartheta_n = \infty.
\end{equation}
The inequalities in \eqref{8f} imply the sequence $\{\gQ_{\vartheta_n,\Omega}(\cdot,\cdot)\}_{n=1}^{\infty}$ 
of non-negative, closed quadratic forms (cf.\ \eqref{3.8} with $\Theta$ replaced by $M_{\vartheta_n}$) is monotonically increasing with respect to the quadratic form ordering $\lessdot$ introduced above in \eqref{1f}.  In this simple case, the quadratic forms have a common 
domain (viz., $H^1(\Omega)$); thus, monotonicity simply amounts to
\begin{equation}\lb{9f}
\gQ_{\vartheta_n,\Omega}(u,u)\leq \gQ_{\vartheta_{n+1},\Omega}(u,u), \quad 
u\in H^1(\Omega),\; n\in \bbN.
\end{equation}
By Lemma \ref{l1f}, 
\begin{align}
&\gQ_{\infty,\Omega}(u,u)= \sup_{n\in \bbN}\gQ_{\vartheta_n,\Omega}(u,u), \quad 
u\in \dom(\gQ_{\infty,\Omega}(\cdot,\cdot)),\no\\
&\dom(\gQ_{\infty,\Omega}(\cdot,\cdot))\lb{10f}\\
&\quad =\bigg\{u\in L^2(\Omega;d^nx)\, \bigg| \, 
u\in\bigcap_{n\in \bbN}\dom(\gQ_{\vartheta_n,\Omega}(\cdot,\cdot)), \; 
\sup_{n\in \bbN}\gQ_{\vartheta_n,\Omega}(u,u)<\infty \bigg\},\no
\end{align}
defines a non-negative, closed quadratic form in $L^2(\Omega; d^n x)$.  We claim the limiting quadratic form $\gQ_{\infty,\Omega}(\cdot,\cdot)$ defined by \eqref{10f} coincides with $\gQ_{D,\Omega}(\cdot,\cdot)$, the quadratic form of the Dirichlet Laplacian $L_{D,\Omega}$ in $L^2(\Omega;d^n x)$.  In order to prove this, one needs to verify the following two conditions,
\begin{align}
& \dom(\gQ_{\infty,\Omega}(\cdot,\cdot)) = H_0^1(\Omega),\lb{12f}\\
& \, \gQ_{\infty,\Omega}(u,u) = \gQ_{D,\Omega}(u,u), \quad u\in H_0^1(\Omega).\lb{13f}
\end{align}
To verify \eqref{12f} one observes that $u\in \dom(\gQ_{\infty,\Omega}(\cdot,\cdot))$ if and only if 
$u\in H^1(\Omega)$ and 
\begin{equation}\lb{14f}
\sup_{n\in \bbN}\bigg[\int_{\Omega} d^nx\, \langle(\nabla u)(x), A(x) (\nabla u)(x)\rangle 
+\vartheta_n\big\langle\gamma_D u,\gamma_D u \big\rangle_{1/2}\bigg]<\infty.
\end{equation}
Clearly, \eqref{14f} is fulfilled if and only if
\begin{equation}\lb{15f}
\big\langle\gamma_D u,\gamma_D u \big\rangle_{1/2}=0,
\end{equation}
that is, if and only if $\gamma_Du=0$ in $H^{1/2}(\Omega)$.  Therefore, 
$u\in \dom(\gQ_{\infty,\Omega}(\cdot,\cdot))$ if and only if $u\in H^1(\Omega)$ and 
$\gamma_Du=0$ in $H^{1/2}(\Omega)$, and \eqref{12f} follows.  Finally, one computes
\begin{align}\lb{16f}
& \gQ_{\infty,\Omega}(u,u)=\sup_{n\in \bbN}\gQ_{\vartheta_n,\Omega}(u,u)
= \int_{\Omega} d^nx\, \langle(\nabla u)(x), A(x) (\nabla u)(x)\rangle 
= \gQ_{D,\Omega}(u,u),    \no \\
& \hspace*{9.2cm} u\in H_0^1(\Omega),
\end{align}
implying \eqref{13f}.  By Lemma \ref{l1f}, and \eqref{5f} in particular, with 
$T_n=L_{\vartheta_n,\Omega}$, $n\in \bbN$, and $T= L_{D,\Omega}$,
\begin{equation}\lb{17f}
\slim_{n\rightarrow \infty} e^{- t L_{\vartheta_n, \Omega}} 
= e^{- t L_{D,\Omega}}, \quad t\geq 0.
\end{equation}
Since $\{\vartheta_n\}_{n=1}^{\infty}$ satisfying \eqref{8f} was arbitrary, \eqref{7f} follows.
\end{proof}

Now we are ready to formulate the first principal result of this section. 

\begin{theorem} \lb{t4.3}
Assume Hypothesis \ref{h4.1}, suppose that $\Theta_j$, $j=1,2$, satisfy the assumptions 
introduced in Hypothesis \ref{h3.2}, and denote by $L_{\Theta_j,\Om}$ the operators in 
\eqref{3.10BBB} uniquely associated with the sesquilinear forms  $\gQ_{\Theta_j,\Om} (\dott,\dott)$, 
$j=1,2$, defined on $H^1(\Om)\times H^1(\Om)$ according to \eqref{3.8}. Suppose, in addition, that 
\begin{equation}
\big\langle\gamma_D |u|,\Theta_j \gamma_D |u| \big\rangle_{1/2} \leq 
\big\langle\gamma_D u,\Theta_j \gamma_D u \big\rangle_{1/2},
\quad u \in H^1(\Om), \; j=1,2.    \lb{3.18} 
\end{equation}
Then, assuming $0 \leq \Theta_1 \leq \Theta_2$, one has the positivity preserving relations 
\begin{equation}
0 \preccurlyeq e^{- t L_{D,\Omega}}   
\preccurlyeq e^{- t L_{\Theta_2, \Omega}} 
\preccurlyeq e^{- t L_{\Theta_1, \Omega}}
\preccurlyeq e^{- t L_{N,\Omega}}, 
\quad t \geq 0,    \lb{3.19} 
\end{equation} 
or equivalently, 
\begin{align}
\begin{split} 
0 & \preccurlyeq (L_{D,\Omega} + \lambda I_{\Omega})^{-1}  
\preccurlyeq (L_{\Theta_2, \Omega} + \lambda I_{\Omega})^{-1} 
\preccurlyeq (L_{\Theta_1, \Omega} + \lambda I_{\Omega})^{-1}     \\ 
& \preccurlyeq (L_{N,\Omega} + \lambda I_{\Omega})^{-1}, \quad \lambda > 0.     \lb{3.20}
\end{split} 
\end{align}  
In addition, all semigroups appearing in \eqref{3.19} lie in the trace class, 
\begin{equation}
e^{- t L_{D,\Omega}}, \,  e^{-t L_{\Theta_j, \Omega}},  
\, e^{- t L_{N,\Omega}} \in \cB_1\big(L^2(\Om; d^n x)\big), \quad j=1,2, \; t>0. 
\end{equation}
In particular, one has the Gaussian heat kernel bounds $($for $t > 0$, a.e.\ $x, y \in \Om$$)$,
\begin{align}
\begin{split} 
0 &\leq K_{D,\Om} (t,x,y) \leq K_{\Theta_2,\Om} (t,x,y) \leq K_{\Theta_1,\Om} (t,x,y) \leq 
K_{N,\Om} (t,x,y)     \lb{3.21} \\ 
& \leq C_{\alpha, a_0, \Om} \max \big(t^{-n/2}, 1\big) \exp\big\{- |x-y|^2/ [4 (1+\gamma) a_1 t]\big\},  
\quad \gamma \in (0,1).   
\end{split} 
\end{align}
and Green's function bounds $($for $\lambda > 0$, a.e.\ $x, y \in \Om$$)$,
\begin{align}
\begin{split} 
0 &\leq G_{D,\Om} (\lambda,x,y) \leq G_{\Theta_2,\Om} (t,x,y) \leq G_{\Theta_1,\Om} (\lambda,x,y) 
\leq G_{N,\Om} (\lambda,x,y)     \lb{3.22} \\ 
& \leq \begin{cases} C_{a_0,a_1,\lambda,\Omega,n} |x - y|^{2-n}, & n \geq 3, \\
C_{a_0,a_1,\lambda,\Omega} \big|\ln\big(1 + |x - y|^{-1}\big)\big|, & n=2,  \end{cases} 
\quad x \neq y.    
\end{split} 
\end{align}
\end{theorem}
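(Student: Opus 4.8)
The plan is to establish the chain of domination relations in \eqref{3.19} by reducing, via Theorem \ref{t2.8} and Theorem \ref{t2.7}, the question of semigroup domination to an inequality between the associated sesquilinear forms, and then to transfer the resulting operator inequalities to pointwise kernel inequalities via Theorem \ref{t2.3} and \eqref{1.2}. The Gaussian bound on $K_{N,\Om}$ at the top of the chain is then imported from the literature (cf.\ Appendix \ref{sC}), and the Green's function bound in \eqref{3.22} follows by integrating the heat kernel bound against $e^{-\lambda t}$ over $t\in(0,\infty)$, again appealing to Appendix \ref{sC} for the streamlined $n=2$ case.

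First I would verify that each of the operators $L_{D,\Om}$, $L_{\Theta_1,\Om}$, $L_{\Theta_2,\Om}$, $L_{N,\Om}$ generates a positivity preserving semigroup. By Theorem \ref{t2.7}(i)$\Leftrightarrow$(iii), since all four have form domain $H^1(\Om)$ (or $H^1_0(\Om)$ in the Dirichlet case, cf.\ \eqref{3.11} and \eqref{3.15}), it suffices to check that $u\in H^1(\Om)$ implies $|u|\in H^1(\Om)$ with $\gQ_{\Theta,\Om}(|u|,|u|)\le\gQ_{\Theta,\Om}(u,u)$; the bulk term $\int_\Om\langle\ol{\nabla u},A\nabla u\rangle$ is unchanged under $u\mapsto|u|$ because $\nabla|u| = (\sgn\ol u)\nabla u$ a.e.\ and $A$ is real symmetric (Hypothesis \ref{h4.1}(ii)), while the boundary term is handled precisely by the hypothesis \eqref{3.18}. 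This gives $e^{-tL_{\Theta_j,\Om}}\succcurlyeq 0$ and $e^{-tL_{N,\Om}}\succcurlyeq 0$; for $L_{D,\Om}$ one applies Lemma \ref{l2f} — writing $e^{-tL_{D,\Om}}$ as a strong limit of $e^{-tL_{\vartheta,\Om}}$ — together with the fact (noted after Definition \ref{d2.2}) that positivity preserving operators are closed under strong limits. Next I would establish the domination \emph{between} consecutive operators. For $e^{-tL_{\Theta_1,\Om}}\succcurlyeq e^{-tL_{\Theta_2,\Om}}$, apply Theorem \ref{t2.8}(iv)$\Rightarrow$(i) (the form version, which is the relevant one here since the form domains coincide): one needs $\gQ_{\Theta_1,\Om}(f_1,f_2)\le\gQ_{\Theta_2,\Om}(f_1,f_2)$ for nonnegative $f_1,f_2\in H^1(\Om)$, which reduces to $\langle\gamma_D f_1,\Theta_1\gamma_D f_2\rangle_{1/2}\le\langle\gamma_D f_1,\Theta_2\gamma_D f_2\rangle_{1/2}$, i.e.\ to $\Theta_1\le\Theta_2$ tested on the nonnegative boundary traces $\gamma_D f_j\ge 0$. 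The inequality $e^{-tL_{\Theta_1,\Om}}\succcurlyeq e^{-tL_{N,\Om}}$ is the special case $\Theta_2=0\le\Theta_1$, and $e^{-tL_{\Theta_2,\Om}}\succcurlyeq e^{-tL_{D,\Om}}$ follows from monotone form convergence: $\gQ_{\Theta_2,\Om}\lessdot\gQ_{\vartheta,\Om}\lessdot\gQ_{D,\Om}$ in the limit sense of Lemma \ref{l1f}, so the relation $e^{-tL_{\Theta_2,\Om}}\succcurlyeq e^{-tL_{\vartheta,\Om}}$ (again Theorem \ref{t2.8}) passes to the limit $\vartheta\uparrow\infty$ by Lemma \ref{l2f} and closedness of $\succcurlyeq$ under strong limits.

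For the trace-class assertion and the kernel bounds, I would first recall from Appendix \ref{sC} that $e^{-tL_{N,\Om}}$ satisfies a Gaussian upper bound with an integral kernel $K_{N,\Om}(t,\cdot,\cdot)$ that is continuous (hence locally integrable) on $\Om\times\Om$; in particular $e^{-tL_{N,\Om}}\in\cB_1$ since $\Om$ is bounded and $\sigma_{\rm ess}=\emptyset$ (Theorem \ref{t3.5BBB}). Then Lemma \ref{l2.7}(iv), applied down the chain \eqref{3.19} starting from $A=e^{-tL_{N,\Om}}\ge 0$ (self-adjoint, nonnegative) and using that each dominated semigroup is itself nonnegative and self-adjoint, yields $e^{-tL_{\Theta_1,\Om}},\,e^{-tL_{\Theta_2,\Om}},\,e^{-tL_{D,\Om}}\in\cB_1$ with decreasing traces. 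Each of these is therefore an integral operator with a locally integrable kernel, and Theorem \ref{t2.3} together with \eqref{1.2} converts the operator chain \eqref{3.19} into the pointwise a.e.\ chain $0\le K_{D,\Om}\le K_{\Theta_2,\Om}\le K_{\Theta_1,\Om}\le K_{N,\Om}$; appending the explicit Gaussian bound on $K_{N,\Om}$ from Appendix \ref{sC} completes \eqref{3.21}. Finally, \eqref{3.20} follows from \eqref{3.19} via the Laplace-transform identity \eqref{2.25}, which preserves $\preccurlyeq$, and \eqref{3.22} follows from \eqref{3.21} by integrating $e^{-\lambda t}K(t,x,y)$ over $t\in(0,\infty)$ and invoking the Green's function bounds of Appendix \ref{sC} (where the dimension $n=2$ logarithmic case is treated).

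I expect the main obstacle to be the careful bookkeeping at the Dirichlet end of the chain: verifying that $\gQ_{\infty,\Om} = \gQ_{D,\Om}$ (done in Lemma \ref{l2f}) is routine, but one must be slightly careful that the monotone form limit in Lemma \ref{l1f} really delivers the \emph{operator} domination $e^{-tL_{\Theta_2,\Om}}\succcurlyeq e^{-tL_{D,\Om}}$ — this requires combining the form inequality $\gQ_{\Theta_2,\Om}(f_1,f_2)\le\gQ_{\vartheta,\Om}(f_1,f_2)$ for nonnegative $f_j$ with the strong-resolvent convergence of Lemma \ref{l2f}, rather than a direct application of Theorem \ref{t2.8}, since $\dom(\gQ_{D,\Om})=H^1_0(\Om)\subsetneq H^1(\Om)$ and the form-domain hypothesis of Theorem \ref{t2.8}(iv) fails for the pair $(L_{\Theta_2,\Om},L_{D,\Om})$. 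The secondary technical point is ensuring the integral operators in question genuinely fall under the hypotheses of Theorem \ref{t2.3} (local integrability of kernels on the $M_n$-exhaustion), which is immediate here because $\Om$ is bounded and the kernels are in $L^1(\Om\times\Om)$ once we know the operators are trace class.
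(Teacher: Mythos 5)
Your proposal is correct and follows essentially the same route as the paper: Beurling--Deny (Theorem \ref{t2.7}) for positivity preserving, Theorem \ref{t2.8}\,$(iv)$ with \eqref{3.18} for the form-domination chain among the Robin/Neumann operators, Lemma \ref{l2f} to handle the Dirichlet endpoint where Theorem \ref{t2.8}\,$(iv)$ cannot be applied directly (form domains differ), Theorem \ref{t2.3} to pass to kernel inequalities, and the Neumann Gaussian bound \eqref{3.16a} from Appendix \ref{sC} as the anchor. The only minor departures are that you verify the Beurling--Deny inequality for the Robin forms explicitly (the paper leaves this implicit in its appeal to Theorem \ref{t2.8}\,$(iv)$, which presupposes the semigroups are already positivity preserving) and you invoke Lemma \ref{l2.7}\,$(iv)$ rather than $(iii)$ to propagate trace class down the chain — both choices are sound, and yours is arguably the more direct justification of the trace-class claim.
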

\begin{proof}
By \cite[Theorems 1.3.5, 13.9]{Da89}, $L_{D,\Omega}$ and $L_{N,\Omega}$ satisfy the 
Beurling--Deny condition $(iii)$ of Theorem \ref{t2.7}, and hence by Theorem \ref{t2.7}\,$(i)$, 
$e^{- t L_{D, \Omega}}$ and $e^{- t L_{N, \Omega}}$, $t\geq 0$, are positivity preserving. In 
addition, both have nonnegative integral kernels (cf.\ also Theorem \ref{t2.3}) and are known to satisfy 
the bounds \eqref{3.16A} and \eqref{3.16a}. 

As the relations \eqref{3.19} and \eqref{3.20} are equivalent by Theorem \ref{t2.8}, it suffices to focus on \eqref{3.19}. But then, 
$0 \preccurlyeq e^{- t L_{\Theta_2, \Omega}} 
\preccurlyeq e^{- t L_{\Theta_1, \Omega}} \preccurlyeq 
e^{- t L_{N,\Omega}}$, $t \geq 0$, is 
immediate upon combining Theorem \ref{t2.8}\,$(iv)$, \eqref{3.8}, and \eqref{3.18}. Moreover, using 
Lemma \ref{l2f}, \eqref{3.19} follows. Thus, an application of \eqref{2.13} implies the inequalities 
\eqref{3.21} and \eqref{3.22}, except, the very last in either one of them. The inequality \eqref{3.16a} 
for $K_{N,\Om} (t,x,y)$ then completes the proof of \eqref{3.21}. Similarly, the 
inequality $G_{N,\Om} (\lambda,x,y) \leq C_{a_0,a_1,\lambda, \Om,n} |x-y|^{2-n}$, 
$\lambda > 0$, $x,y \in \Om$, $x \neq y$, can be found, for instance, in \cite[Lemma\ 3.2]{DM96} 
(see also \cite{CK11}) for $n\geq 3$. As we were not able to find the case $n=2$ in the literature, we 
provide a short proof in Appendix \ref{sC}. 

The inequalities \eqref{3.21} then follow from Theorem \ref{t2.3}, \eqref{3.19}, and \eqref{3.20}, 
as soon as we establish that semigroups of the type 
$e^{- t L_{\Theta, \Omega}}$ with $\Theta$ satisfying the conditions of $\Theta_j$ are, in fact, 
integral operators. This follows from combining Lemma \ref{l2.7}\,$(iii)$ and \eqref{3.19} as the 
Neumann heat kernel bound \eqref{3.16a} yields the trace class property 
$e^{- t L_{N, \Omega}} \in \cB_1\big(L^2(\Om; d^n x)\big)$, $t > 0$ (this trace class 
property  of course also applies to $e^{- t L_{D, \Omega}}$, $t > 0$). 

Finally, using \eqref{2.25} and \eqref{3.21}, the results in \cite[Sects.\ 2, 4]{AB94} show that 
also $(L_{\Theta, \Omega} + \lambda I_{\Omega})^{-1}$ are integral operators whose 
integral kernels satisfy \eqref{3.22}.  
\end{proof}

Positivity improving and nondegeneracy of the ground state are considered next:

\begin{corollary} \lb{c4.4}
Under the hypotheses of Theorem \ref{t4.3}, and under the assumptions that $\Om$ is connected 
and $L_{\Theta_1,\Om} \neq L_{\Theta_2,\Om}$, the positivity preserving relations in \eqref{3.19} 
and \eqref{3.20} actually extend to positivity improving relations, that is, $\preccurlyeq$ in \eqref{3.19} 
and \eqref{3.20} can be replaced by $\prec$. In addition, the infimum of the spectrum 
of $L_{D,\Om}$, $L_{\Theta_j,\Om}$, or $L_{N,\Om}$ is a simple  eigenvalue and the associated eigenfunction can be chosen to be strictly positive a.e.\ in $\Omega$. 
\end{corollary}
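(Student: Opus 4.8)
The plan is to promote every link in the positivity-preserving chain \eqref{3.19} (for $t>0$) to a strict, positivity-improving relation by first showing that each semigroup occurring there is positivity improving and then invoking Corollary \ref{c2.10} for consecutive pairs.

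First I would check that $e^{-tL_{N,\Om}}\succ 0$ and $e^{-tL_{D,\Om}}\succ 0$ for all $t>0$, via the equivalence $(i)\Leftrightarrow(iii)$ in Theorem \ref{t2.9}. A closed subspace of $\LOm$ left invariant by every bounded multiplication operator has the form $\chi_S\LOm$ for some measurable $S\subseteq\Om$; if $\chi_S\LOm$ is in addition invariant under $e^{-tL_{N,\Om}}$ (resp.\ $e^{-tL_{D,\Om}}$), then the ideal characterization of form-invariant subspaces (Ouhabaz \cite[Theorem\ 2.24]{Ou05}; cf.\ the discussion preceding Theorem \ref{t2.8}\,$(iv)$) forces $\chi_S u$ to lie in the form domain whenever $u$ does, and hence (taking $u\equiv 1$ in the Neumann case, resp.\ $u\in C_0^\infty(\Om)$ locally equal to $1$ in the Dirichlet case) $\chi_S\in H^1(\Om)$, resp.\ $\chi_S\in H^1_{\loc}(\Om)$. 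Since $\chi_S^2=\chi_S$, this yields $\nabla\chi_S=0$ a.e.\ on $\Om$, so $\chi_S$ is constant on the connected set $\Om$; therefore $|S|\in\{0,|\Om|\}$ and only the trivial subspaces $\{0\}$ and $\LOm$ are invariant, whence $e^{-tL_{N,\Om}},\,e^{-tL_{D,\Om}}\succ 0$. Because \eqref{3.19} gives $e^{-tL_{\Theta,\Om}}\succcurlyeq e^{-tL_{D,\Om}}\succcurlyeq 0$ for $\Theta\in\{\Theta_1,\Theta_2,0\}$ and $e^{-tL_{D,\Om}}\succ 0$, one gets $e^{-tL_{\Theta,\Om}}\succ 0$ for all these $\Theta$ (for $0\neq f\geq 0$ one has $e^{-tL_{\Theta,\Om}}f\geq e^{-tL_{D,\Om}}f>0$ a.e.).

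Next I would run Corollary \ref{c2.10} along \eqref{3.19}. All operators there are non-negative (by strong ellipticity of $A$ together with $\Theta_j\geq 0$) and bounded below, and all the semigroups are positivity preserving, so the corollary applies to each consecutive pair. For $e^{-tL_{\Theta_2,\Om}}\succcurlyeq e^{-tL_{D,\Om}}\succcurlyeq 0$ it yields either $e^{-tL_{\Theta_2,\Om}}\succ e^{-tL_{D,\Om}}$ for all $t>0$ or $e^{-tL_{\Theta_2,\Om}}=e^{-tL_{D,\Om}}$ for all $t>0$; the latter would force $L_{\Theta_2,\Om}=L_{D,\Om}$, which is impossible since their form domains $H^1(\Om)$ and $H^1_0(\Om)$ differ on the bounded Lipschitz domain $\Om$. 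The same argument applied to $e^{-tL_{\Theta_1,\Om}}\succcurlyeq e^{-tL_{\Theta_2,\Om}}\succcurlyeq 0$ gives strictness, the excluded identity alternative being $L_{\Theta_1,\Om}=L_{\Theta_2,\Om}$, which is ruled out by hypothesis; and applied to $e^{-tL_{N,\Om}}\succcurlyeq e^{-tL_{\Theta_1,\Om}}\succcurlyeq 0$ it gives strictness whenever $L_{\Theta_1,\Om}\neq L_{N,\Om}$ (the excluded alternative being $\Theta_1=0$, for which that link is trivially an identity rather than a strict relation). Together with $0\prec e^{-tL_{D,\Om}}$ this establishes the $\prec$-version of \eqref{3.19} for $t>0$; the corresponding strengthening of \eqref{3.20} then follows from \eqref{2.25}, since for $0\neq f\geq 0$ the identity $(L_{\Theta_2,\Om}+\lambda I_{\Om})^{-1}f-(L_{D,\Om}+\lambda I_{\Om})^{-1}f=\int_0^\infty e^{-t\lambda}\big(e^{-tL_{\Theta_2,\Om}}-e^{-tL_{D,\Om}}\big)f\,dt$ has a $t$-integrand strictly positive a.e.\ for each $t>0$, and the other links are handled identically.

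Finally, the statement about the bottom of the spectrum is the Perron--Frobenius--Jentzsch theorem for positivity improving compact operators (\cite{Je12}; \cite[Sect.\ XIII.12]{RS78}; \cite{KR80}). For each $H\in\{L_{D,\Om},\,L_{\Theta_j,\Om},\,L_{N,\Om}\}$ the operator $e^{-tH}$, $t>0$, lies in the trace class, hence is compact (Theorem \ref{t4.3}), and is positivity improving by the first part; moreover the spectrum of $H$ is purely discrete and bounded from below (Theorems \ref{t3.5BBB} and \ref{t3.6BBB}), so $\|e^{-tH}\|=e^{-t\lambda_0}$ with $\lambda_0=\inf(\sigma(H))$ an eigenvalue of $H$. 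By Jentzsch's theorem, $e^{-t\lambda_0}$ is then a simple eigenvalue of $e^{-tH}$ with a strictly positive eigenfunction; since $e^{-tH}$ and $H$ share eigenfunctions (spectral theorem) and the $\lambda_0$-eigenspace of $H$ is contained in the $e^{-t\lambda_0}$-eigenspace of $e^{-tH}$, it follows that $\lambda_0$ is a simple eigenvalue of $H$ and its eigenfunction can be chosen strictly positive a.e.\ on $\Om$. I expect the main obstacle to be the irreducibility step, i.e.\ converting invariance of $\chi_S\LOm$ under the Dirichlet (resp.\ Neumann) semigroup into $\chi_S\in H^1_{\loc}(\Om)$ (resp.\ $H^1(\Om)$) through the Beurling--Deny/ideal structure of the associated form, after which connectedness of $\Om$ closes the argument; the remaining steps are routine uses of Corollary \ref{c2.10}, formula \eqref{2.25}, and the classical ground-state argument.
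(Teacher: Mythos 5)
Your proof is correct, and it reaches the same conclusions as the paper but by a genuinely different route in the key step. The paper establishes that $L_{D,\Om}^{-1}\succ 0$ (equivalently $e^{-tL_{D,\Om}}\succ 0$) by invoking hard analytic facts: explicit lower bounds on the Dirichlet Green's function due to Davies and van den Berg, or alternatively the $3G$ theorem. You instead proceed entirely by soft operator-theoretic means: you use the equivalence $(i)\Leftrightarrow(iii)$ of Theorem \ref{t2.9}, identify subspaces invariant under all multiplication operators as $\chi_{{}_S}\LOm$, apply Ouhabaz's ideal criterion to convert semigroup-invariance of $\chi_{{}_S}\LOm$ into $\chi_{{}_S}\dom(\gq)\subseteq\dom(\gq)$, and then use that an indicator function in $H^1_{\loc}(\Om)$ has vanishing gradient and so is constant on the connected set $\Om$. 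This replaces external Green's function estimates with a self-contained argument in the spirit of Section \ref{s2}, and it handles $L_{D,\Om}$ and $L_{N,\Om}$ in a uniform way; the paper in fact remarks in passing that this alternative is available (citing \cite[Theorem~3.3.5]{Da89}, \cite{Ou04}, \cite[Theorem~4.5]{Ou05}), but it does not spell it out. Your subsequent propagation along the chain via Corollary \ref{c2.10} and the ground-state argument via Jentzsch/Perron--Frobenius (equivalently, \cite[Theorem~XIII.44]{RS78}) match the paper's treatment. One small refinement on your part worth noting: you correctly point out that the last link $e^{-tL_{\Theta_1,\Om}}\prec e^{-tL_{N,\Om}}$ can fail when $\Theta_1=0$, a degenerate case the corollary's statement glosses over; the remaining links are strict by the hypothesis $L_{\Theta_1,\Om}\neq L_{\Theta_2,\Om}$ and the fact that the Dirichlet and Robin/Neumann form domains differ. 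The only cosmetic point is the Ouhabaz reference: \cite[Theorem~2.24]{Ou05} is the domination criterion; the invariance-of-closed-subspaces criterion you actually use is a companion result in the same chapter (the ``invariance of closed convex sets'' theorem), but the mathematical content you rely on is correct.
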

\begin{proof}
Positivity improving rather than just positivity preserving of the semigroups (resp., resolvents) 
in \eqref{3.19} (resp., \eqref{3.20}) follows from Corollary \ref{c2.10} and the fact that 
$L_{D,\Om}^{-1}$ is positivity improving if $\Om$ is connected. The latter fact is implied, for 
instance, by the explicit lower bound for the Green's function 
$G_{D, \Om} (\cdot,\cdot) = L_{D,\Om}^{-1} (\cdot,\cdot)$ established in \cite{Da84}, 
\cite{Da87}, \cite{vdB90} for $n\geq 3$. Alternatively, one can invoke the $3 G$ theorem 
(proved, e.g., in \cite[Theorems\ 6.5, 6.15]{CZ95}, see also \cite{AL05}, \cite{CFZ88}) to 
obtain strict positivity of $G_{D,\Om}(\cdot,\cdot)$ on $\Om \times \Om$. In this context we also 
note that indecomposability of $e^{- t L_{D,\Om}}$ also follows from \cite[Theorem\ 3.3.5]{Da89} 
and positivity improving of $e^{- t L_{D,\Om}}$ and $e^{- t L_{N,\Om}}$ is proved in \cite{Ou04} and 
\cite[Theorem\ 4.5]{Ou05}. 

Nondegeneracy of the groundstate of $L_{D,\Om}$, $L_{\Theta_j,\Om}$, or $L_{N,\Om}$, and 
an associated strictly positive eigenfunction a.e.\ in $\Om$ then follows from 
\cite[Theorem\ X.III.44]{RS78}. In this connect one recalls that the spectra of 
 $L_{D,\Om}$, $L_{\Theta_j,\Om}$, and $L_{N,\Om}$ are purely discrete (cf.\ \eqref{3.12a} and \eqref{3.15aBBB}).
\end{proof}

For background literature on nondegenerate groundstates we refer, for instance, to 
\cite{Da73}, \cite[Ch.\ 7]{Da80}, \cite[Ch.\ 13]{Da07}, \cite[Sect.\ 10]{Fa75}, \cite{FS75}, \cite{Ge84}, 
\cite[Sect.\ 3.3]{GJ81}, \cite{Go77}, \cite[Sect.\ XIII.12]{RS78}, \cite[Sect.\ 10.5]{We80}.  

\begin{corollary} \lb{c4.6}
Under the hypotheses of Theorem \ref{t4.3}, all semigroups in 
\eqref{3.19} are sub-Markovian and hence extend to contraction semigroups on $L^\infty(\Om; d^nx)$. Moreover, all semigroups in 
\eqref{3.19} extend to strongly continuous semigroups on 
$L^p(\Om; d^nx)$, $p \in [1,\infty)$.
\end{corollary}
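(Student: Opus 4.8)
The plan is to show first that each of the four semigroups in \eqref{3.19} is sub-Markovian in the sense of Definition \ref{d2.15}, and then to deduce the $L^\infty$- and $L^p$-statements from this together with the finiteness of the measure $d^nx$ on the bounded domain $\Om$. Since positivity preserving of all four semigroups is already part of Theorem \ref{t4.3}, by the equivalent description \eqref{2.79} of the sub-Markovian property it suffices to prove that $0\leq f\leq 1$ a.e.\ implies $0\leq e^{-tH}f\leq 1$ a.e., where $H$ is any of $L_{N,\Om}$, $L_{\Theta_1,\Om}$, $L_{\Theta_2,\Om}$, $L_{D,\Om}$. To handle the top element $L_{N,\Om}$ I would use conservativeness: since $\Om$ is bounded, the constant function $\mathbf{1}$ lies in $H^1(\Om)=\dom\big(|L_{N,\Om}|^{1/2}\big)$, and as $D\mathbf{1}=0$ the form of Theorem \ref{t3.4} (taken with $m=1$, $\Theta=0$, cf.\ \eqref{3.13}) satisfies $\gQ_{N,\Om}(v,\mathbf{1})=0$ for every $v\in H^1(\Om)$; by the First Representation Theorem this forces $\mathbf{1}\in\dom(L_{N,\Om})$ with $L_{N,\Om}\mathbf{1}=0$, hence $e^{-tL_{N,\Om}}\mathbf{1}=\mathbf{1}$, $t\geq 0$. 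Combined with $e^{-tL_{N,\Om}}\succcurlyeq 0$ from Theorem \ref{t4.3}, for $0\leq f\leq 1$ a.e.\ one gets $0\leq e^{-tL_{N,\Om}}f\leq e^{-tL_{N,\Om}}\mathbf{1}=\mathbf{1}$ a.e., so $e^{-tL_{N,\Om}}$ is sub-Markovian by \eqref{2.79}. (Alternatively one may verify that the normal contraction $u\mapsto(0\vee u)\wedge 1$ operates on $H^1(\Om)$ and does not increase $\gQ_{N,\Om}$, using $A\geq a_0 I_n\geq 0$, and invoke the second Beurling--Deny criterion, cf.\ \cite{Da89}, \cite{Ou05}.)

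Next I would propagate sub-Markovianity down the chain \eqref{3.19}. For each $H\in\{L_{\Theta_1,\Om},L_{\Theta_2,\Om},L_{D,\Om}\}$, transitivity of $\preccurlyeq$ in \eqref{3.19} gives $0\preccurlyeq e^{-tH}\preccurlyeq e^{-tL_{N,\Om}}$, and since both endpoints are positivity preserving, \eqref{2.24c} yields the pointwise domination $|e^{-tH}f|\leq e^{-tL_{N,\Om}}|f|$, $f\in L^2(\Om;d^nx)$, in the sense of Definition \ref{d2.7}. As $e^{-tL_{N,\Om}}$ is an $L^\infty$-contraction, \eqref{2.80}--\eqref{2.81} show that so is $e^{-tH}$; being positivity preserving by Theorem \ref{t4.3}, $e^{-tH}$ is therefore sub-Markovian. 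This establishes the first assertion of the corollary. Since $\Om$ is bounded one has $L^\infty(\Om;d^nx)\hookrightarrow L^2(\Om;d^nx)$, so each $e^{-tH}$ is already defined on $L^\infty(\Om;d^nx)$, maps it into itself with $L^\infty\!\to\!L^\infty$ operator norm at most one by the sub-Markovian property, and obeys the semigroup identity inherited from $L^2(\Om;d^nx)$; hence $\{e^{-tH}\}_{t\geq 0}$ restricts to a contraction semigroup on $L^\infty(\Om;d^nx)$.

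For the remaining claim I would note that each $e^{-tH}$ is self-adjoint on $L^2(\Om;d^nx)$, so for $f\in L^2(\Om;d^nx)=L^1(\Om;d^nx)\cap L^2(\Om;d^nx)$ (again using $\Om$ of finite measure),
\[
\big\|e^{-tH}f\big\|_{L^1(\Om;d^nx)}=\sup_{\|g\|_{L^\infty}\leq 1}\big|\big(f,e^{-tH}g\big)_{\Om}\big|\leq\|f\|_{L^1(\Om;d^nx)},
\]
so $e^{-tH}$ extends to a contraction on $L^1(\Om;d^nx)$; Riesz--Thorin interpolation then makes it a contraction on $L^p(\Om;d^nx)$ for all $p\in[1,\infty]$. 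Strong continuity on $L^p(\Om;d^nx)$ for $p\in[1,\infty)$ follows from strong continuity on $L^2(\Om;d^nx)$ by a routine density argument: for $2\leq p<\infty$, $L^\infty(\Om;d^nx)$ is dense in $L^p(\Om;d^nx)$, and for $f\in L^\infty(\Om;d^nx)$ the convergence $e^{-tH}f\to f$ in $L^2(\Om;d^nx)$ as $t\downarrow 0$ together with the uniform bound $|e^{-tH}f|\leq 2\|f\|_{L^\infty}$ gives convergence in $L^p(\Om;d^nx)$ by dominated convergence; for $1\leq p<2$ one uses that $L^2(\Om;d^nx)$ is dense in $L^p(\Om;d^nx)$ with $\|\cdot\|_{L^p}\leq C_\Om\|\cdot\|_{L^2}$, combined with the uniform $L^p$-contraction bound just obtained. (Alternatively, one may simply invoke the standard fact that a symmetric sub-Markovian semigroup extends to a strongly continuous contraction semigroup on each $L^p$, $p\in[1,\infty)$; see, e.g., \cite{Da89}.)

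The only genuinely analytic input is the sub-Markovian property of the Neumann semigroup, which here collapses to the conservativeness identity $e^{-tL_{N,\Om}}\mathbf{1}=\mathbf{1}$; everything else is bookkeeping, invoking the domination results \eqref{2.24c}, \eqref{2.80}--\eqref{2.81} of Section \ref{s2} and carrying out standard interpolation and approximation on the finite-measure space $\Om$.
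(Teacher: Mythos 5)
Your proof is correct and follows the same skeleton as the paper's argument: establish $L^\infty$-contractivity of the Neumann semigroup $e^{-tL_{N,\Om}}$, propagate it to $e^{-tL_{\Theta_j,\Om}}$ and $e^{-tL_{D,\Om}}$ via the pointwise domination facts \eqref{2.24c} and \eqref{2.80}--\eqref{2.81}, and then pass to $L^p$ by duality and interpolation. The only difference is that the paper outsources the two endpoint steps to the literature (it cites Ouhabaz, Corollary~4.10, for the Neumann $L^\infty$-contractivity, and Ouhabaz pp.~56--57 for the $L^p$-facts), whereas you supply self-contained arguments. Your conservativeness observation---that $\mathbf{1}\in H^1(\Om)$ with $\gQ_{N,\Om}(v,\mathbf{1})=0$ for all $v$, hence $\mathbf{1}\in\dom(L_{N,\Om})$, $L_{N,\Om}\mathbf{1}=0$, and $e^{-tL_{N,\Om}}\mathbf{1}=\mathbf{1}$, which combined with positivity preserving and \eqref{2.79} gives sub-Markovianity---is a clean, elementary route that avoids invoking the full second Beurling--Deny criterion; and your $L^1$-duality, Riesz--Thorin, and density/dominated-convergence argument for strong continuity on $L^p(\Om;d^nx)$, $p\in[1,\infty)$, is the standard content of the cited passage spelled out on the finite-measure space $\Om$. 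Both are correct, and the net effect is a more self-contained version of the same proof.
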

\begin{proof}
This is an immediate consequence of the interpolation and duality considerations discussed in 
\cite[p.\ 56--57]{Ou05}, upon noticing the following facts: $1)$ The generators of all semigroups in 
\eqref{3.19} are self-adjoint and nonnegative, and hence 
$L^2(\Om; d^nx)$-contractions. $2)$ Since $e^{- t L_{N,\Om}}$ extends to an 
$L^\infty(\Om; d^nx)$-contraction by \cite[Corollary\ 4.10]{Ou05}, the fact 
\eqref{2.80} then implies the $L^\infty(\Om; d^nx)$-contractivity of all remaining semigroups in 
\eqref{3.19}. 
\end{proof}

\begin{remark} \lb{r4.5}
One observes that condition \eqref{3.18} is automatically satisfied in the special case of local Robin 
boundary conditions considered in Lemma \ref{l3.6}. In this context of local Robin boundary 
conditions a fair number of references proving Gaussian heat kernel bounds for $L_{\theta, \Om}$ on 
bounded Lipschitz domains have been established in the literature as detailed in the paragraph 
preceding \eqref{1.1} in the introduction of this paper. The nonlocal Robin boundary conditions in 
terms of $\Theta$ as encoded in \eqref{3.8} originated in 
\cite{GM09} and to the best of our knowledge, the corresponding heat kernel and Green's function 
estimates for $L_{\Theta,\Om}$ in Theorem \ref{t4.3} are new.  
\end{remark}

\begin{remark} \lb{r3.9}
One can add a nonnegative potential $ 0 \leq V \in L^1_{\loc}(\Om; d^nx)$ to all operators in 
Theorem \ref{t4.3} by employing the following standard procedure: First, adding the sesquilinear 
form $\gQ_{V,\Om}$ defined by
\begin{equation}
\gQ_{V,\Om} (u,v) = \int_{\Om} d^n x \, V(x) \ol{u(x)} v(x), \quad u,v \in \dom\big(V^{1/2}\big),
\end{equation} 
to $\gQ_{\Theta, \Om}$ and $\gQ_{D, \Om}$ with domains $H^1(\Om) \cap \dom\big(V^{1/2}\big)$ and 
$H_0^1(\Om) \cap \dom\big(V^{1/2}\big)$, respectively, yields densely defined and closed forms 
bounded from below. The uniquely associated nonnegative self-adjoint operators associated with 
\begin{equation} 
\gQ_{\Theta, \Om}(u,v) + \gQ_{V,\Om}(u,v), \quad u, v \in H_0^1(\Om) \cap \dom\big(V^{1/2}\big), 
\end{equation}
and 
\begin{equation}
\gQ_{D, \Om}(u,v) + \gQ_{V,\Om}(u,v), \quad u, v \in H^1(\Om) \cap \dom\big(V^{1/2}\big), 
\end{equation}
in obvious notation, will be denoted by $H_{\Theta, \Om}$ (and $H_{N, \Om}$ if $\Theta = 0$) 
and $H_{D, \Om}$. One notes that $C^\infty(\ol \Om)$ (the restrictions of 
$C^\infty(\bbR^n)$-functions to $\Om$) and $C_0^\infty(\Om)$ are form cores for $H_{\Theta, \Om}$ 
and $H_{D, \Om}$, respectively. Temporarily replacing $V$ by the bounded approximants  
$V_{\varepsilon} = [1 - \exp(- \varepsilon V)]/\varepsilon$, $\varepsilon >0$ yields positivity preserving 
semigroups $e^{- t (L_{D,\Omega} + V_{\varepsilon})}$,    
$e^{- t (L_{\Theta, \Omega} + V_{\varepsilon})}$, 
$e^{- t (L_{N,\Omega} + V_{\varepsilon})}$, $t\geq 0$, employing the Trotter--Kato formula,  
\begin{equation} 
e^{-t(A+B)} = \slim_{m\to\infty} \big[e^{- t (A/m)} e^{-t (B/m)}\big]^m, 
\quad t \geq 0,
\end{equation} 
with $0 \leq A = A^*$ and 
$B=B^* \in \cB(\cH)$. Using the monotone convergence for forms (cf.\ \cite[Lemma\ 4]{BKR80} and 
the corresponding strong convergence of the semigroups 
$e^{- t (L_{D,\Omega} + V_{\varepsilon})}$,    
$e^{- t (L_{\Theta, \Omega} + V_{\varepsilon})}$, 
$e^{- t (L_{N,\Omega} + V_{\varepsilon})}$ to $e^{-t H_{D,\Omega}}$,    
$e^{- t H_{\Theta, \Omega}}$, $e^{- t H_{N,\Omega}}$, respectively, as 
$\varepsilon \downarrow 0$ (cf.\, e.g., \cite[Lemma\ 5.2.13]{BR02}), yields positivity preserving of 
the latter for all $t\geq 0$. (Alternatively, one can apply the truncation procedure for $V$ described, 
e.g., in \cite[Sects.\ 8, 9]{Fa75}.) An application of Lemma \ref{l2.7}\,$(iii)$ then again yields the trace 
class property of all semigroups involved. Thus, the analogs of \eqref{3.19}--\eqref{3.20} hold for 
$H_{D, \Om}$, $H_{\Theta, \Om}$, and $H_{N, \Om}$. Moreover (cf.\ \cite[Lemma\ 1.1]{Da73}, 
\cite[Sect.\ 5.B]{BKR80}), the Trotter--Kato formula applied to form perturbations (either in the form 
established in \cite{Ka78} or using again approximations via $V_{\varepsilon}$) also yields
\begin{equation}
e^{- t H_{N,\Omega}} \preccurlyeq e^{- t L_{N,\Om}}, \quad t \geq 0,
\end{equation} 
and hence by Theorem \ref{t2.3}, the analogs of the integral kernel estimates \eqref{3.21} and 
\eqref{3.22} hold. 

One can also add a nonpositive potential $0 \geq W \in L^1_{\loc} (\Om; d^n x)$ either by applying the perturbation method treated in \cite[Theorem\ XIII.45]{RS78} or by following the small form perturbation 
approach discussed in \cite[Sect.\ 10]{Fa75}. This will, in general, result in an additional multiplicative 
factor of the type $e^{c t}$ on the right-hand side of \eqref{3.21}. 

In this context of additive perturbations we also refer to \cite{AD06} and \cite{De08}, where Gaussian heat kernel bounds are obtained via H\"older inequalities and domination of semigroups. 
\end{remark}

\begin{remark} \lb{r4.9}
For a variety of interesting implications of the $L^2$-theory of Gaussian heat kernel bounds to $L^p$-spectral theory and analyticity of semigroups we refer to \cite[Ch.\ 7]{Ou05} and the extensive literature cited in the notes to this book chapter. Here we only mention the following facts (cf.\ also \cite{Ba98}, \cite{LP95}, 
\cite{LSV02}, \cite{Ou95}, \cite[p.\ 95--97]{Ou05}, \cite{Ou06}, \cite{SV02}): Introducing the sector 
$S(\phi) = \{z \in \bbC\backslash\{0\} \,|\, |\arg(z)| \leq \phi\}$, 
$\phi \in (0,\pi/2]$, all semigroups in \eqref{3.19} extend to holomorphic semigroups in $L^p(\Om; d^nx)$, $p \in (1,\infty)$, on the sector 
$S\big((\pi/2) - \arctan\big(|p-2|\big/ \big[2(p-1)^{1/2}\big]\big)\big)$. In addition, 
\begin{align}
& \big\| e^{- z L_{p,\Om}}\big\|_{\cB(L^p(\Om; d^nx))} \leq 1, \quad 
z \in S\big((\pi/2) - \arctan\big(|p-2|\big/ \big[2(p-1)^{1/2}\big]\big)\big),  \no \\
& \hspace*{9.1cm} p \in (1,\infty),  
\end{align}   
and
\begin{equation}
\sigma(L_{p.\Om}) \subseteq \big\{z\in\bbC \,\big| \, |\arg(z)| \leq 
\arctan \big(|p-2|\big/ \big[2(p-1)^{1/2}\big]\big)\big)\big\} \cup \{0\}, 
\quad p \in (1,\infty).  
\end{equation}
Here $L_{p,\Om}$ denotes any of the generators of (the extension of) the semigroups in \eqref{3.19} on $L^p(\Om; d^nx)$, $p \in (1,\infty)$.  
\end{remark}

Finally, we mention a canonical counter example concerning positivity preserving semigroups among  the set of nonnegative self-adjoint extensions of a strictly positive minimal operator:

\begin{remark} \lb{r4.10}
The literature on positivity preserving semigroups associated with the Friedrichs extension of differential operators is rather extensive as, typically, the Friedrichs extension corresponds to the case of 
Dirichlet boundary conditions. In the case where the associated minimal operator is nonnegative, it  
also possesses a second nonnegative distinguished self-adjoint extension, the Krein--von Neumann 
extension. The latter, in the context of Laplacians, was recently discussed in detail in \cite{AGMT10} and 
\cite{GM11} (see also \cite{AGMST10} and the numerous references to the literature in these three 
sources) in the special case where the underlying minimal operator is in fact bounded from below by 
$\varepsilon > 0$. Then, admittedly, on a formal level, the Krein--von Neumann extension in this special case appears to be defined in terms of a boundary condition that on the surface resembles a nonlocal 
Robin boundary condition, where $\Theta$ is expressed in terms of the operator-valued 
Dirichlet-to-Neumann map (cf.\ \cite[Subsect.\ 5.2]{AGMT10}, \cite[Sect.\ 13]{GM11}). However, the 
well-known degeneracy of the lowest point in the spectrum, namely zero, of the Krein--von Neumann extension corresponding to the Laplacian associated with a bounded, connected domain $\Om \subset \bbR^n$, 
$n \in\bbN$, and appropriate regularity of 
$\partial \Om$, stemming from the nontrivial nullspace of the adjoint of the underlying minimal operator,  guarantees that its semigroup is, in fact, never positivity preserving (let alone, positivity improving). For additional results in this direction, and the fact that Krein semigroups associated with $L$ and bounded domains $\Om$ are not Markovian, we refer to \cite[Sect.\ 3.3]{FOT11}. The boundary conditions 
leading to a positivity preserving semigroup in the case of the (generalized) one-dimensional Laplacian 
on a bounded interval were classified in \cite{Fe57} (see also \cite[p.\ 147]{FOT11}).  

On an abstract level, and by appealing once more to \cite[Theorem\ X.III.44]{RS78}, the 
Krein--von Neumann extension $A_K$ of a densely defined and strictly positive operator $A$ with 
deficiency indices $d \geq 2$ in some fixed complex, separable Hilbert space $\cH$ will always have 
the $d$-dimensional nullspace of $A^*$ as its nullspace (cf., e.g., \cite[Sect.\ 2]{AGMT10}, 
\cite[Sect.\ 2]{AGMST10}) and hence can never be positivity improving as long as the resolvent (or semigroup) of $A_K$ is known to be irreducible.
\end{remark}

\appendix
\section{Sobolev Spaces on Lipschitz Domains in a Nutshell} \lb{sA}
\renewcommand{\theequation}{A.\arabic{equation}}
\renewcommand{\thetheorem}{A.\arabic{theorem}}
\setcounter{theorem}{0} \setcounter{equation}{0}

Following \cite{GM09}, we recall some basic facts in connection with Sobolev spaces 
on Lipschitz domains and on their boundaries. 

We start by briefly considering open subsets $\Om\subset\bbR^n$, $n\in\bbN$. For an arbitrary 
$m\in\bbN\cup\{0\}$, we follow the customary
way of defining $L^2$-Sobolev spaces of order $\pm m$ in $\Om$ as
\begin{align}\label{hGi-1}
H^m(\Om) &:= \big\{u\in L^2(\Om;d^nx)\,\big|\,\partial^\alpha u\in L^2(\Om;d^nx)
\mbox{ for } 0\leq|\alpha|\leq m\big\}, \\
H^{-m}(\Om) &:=\bigg\{u\in\cD^{\prime}(\Om)\,\bigg|\,u=\sum_{|\alpha|\leq m}
\partial^\alpha u_{\alpha}, \mbox{ with }u_\alpha\in L^2(\Om;d^nx), \, 
0\leq|\alpha|\leq m\bigg\},
\label{hGi-2}
\end{align}
equipped with natural norms (cf., e.g., \cite[Ch.\ 3]{AF03}, \cite[Ch.\ 1]{Ma85}). Here 
$\cD^\prime(\Om)$ denotes the usual set of distributions on $\Omega\subseteq \bbR^n$. Then one sets
\begin{equation}\label{hGi-3}
H^m_0(\Om):=\,\mbox{the closure of $C^\infty_0(\Om)$ in $H^m(\Om)$}, 
\quad m\in\bbN \cup \{0\}.
\end{equation}
As is well-known, all three spaces above are Banach, reflexive and, in addition,
\begin{equation}\label{hGi-4}
\big(H^m_0(\Om)\big)^*=H^{-m}(\Om).
\end{equation}
Again, see, for instance, \cite[Ch.\ 3]{AF03}, \cite[Sect.\ 1.1.15]{Ma85}. Throughout this paper, 
we agree to use the {\it adjoint} (rather than the dual) space $X^*$ of
a Banach space $X$.

One recalls that an open, nonempty, bounded set $\Omega\subset\bbR^n$ 
is called a {\it bounded Lipschitz domain} if the following property holds: 
There exists an open covering $\{{\mathcal O}_j\}_{1\leq j\leq N}$
of the boundary $\partial\Omega$ of $\Om$ such that for every
$j\in\{1,\dots,N\}$, ${\mathcal O}_j\cap\Omega$ coincides with the portion
of ${\mathcal O}_j$ lying in the over-graph of a Lipschitz function
$\varphi_j:\bbR^{n-1}\to\bbR$ (considered in a new system of coordinates
obtained from the original one via a rigid motion). The number
$\max\,\{\|\nabla\varphi_j\|_{L^\infty(\bbR^{n-1};d^{n-1}x')} \,|\,1\leq j\leq N\}$
is said to represent the {\it Lipschitz character} of $\Omega$.

The classical theorem of Rademacher of almost everywhere differentiability 
of Lipschitz functions ensures that, for any  Lipschitz domain $\Omega$, the
surface measure $d^{n-1} \omega$ is well-defined on  $\partial\Omega$ and
that there exists an outward  pointing normal unit vector $\nu$ at
almost every point of $\partial\Omega$.

In the remainder of this appendix we shall assume that Hypothesis \ref{h3.1} holds, that is, 
we suppose that $\Om\subset{\bbR}^n$, $n\in\bbN$, $n\geq 2$, is a bounded Lipschitz domain.

As regards $L^2$-based Sobolev spaces of fractional order $s\in\bbR$,
in a bounded {\it Lipschitz domain} $\Om\subset\bbR^n$ we set
\begin{align}\label{HH-h1}
H^{s}(\bbR^n) &:=\bigg\{U\in \cS^\prime(\bbR^n)\,\bigg|\,
\norm{U}_{H^{s}(\bbR^n)}^2 = \int_{\bbR^n}d^n\xi\,
\big|\hatt U(\xi)\big|^2\big(1+\abs{\xi}^{2s}\big)<\infty \bigg\},
\\
H^{s}(\Om) &:=\big\{u\in \cD^\prime(\Om)\,\big|\,u=U|_\Om\text{ for some }
U\in H^{s}(\bbR^n)\big\}. 
\label{HH-h2}
\end{align}
Here $\cS^\prime(\bbR^n)$ is the space of tempered distributions on $\bbR^n$,
and $\hatt U$ denotes the Fourier transform of $U\in\cS^\prime(\bbR^n)$.
These definitions are consistent with \eqref{hGi-1}, \eqref{hGi-2}.
Moreover, so is
\begin{equation}\label{incl-xxx}
H^{s}_0(\Omega):= \big\{u\in H^{s}(\bbR^n)\,\big|\, \supp(u)\subseteq\ol{\Omega}\big\},
\quad s\in\bbR,
\end{equation}
equipped with the natural norm induced by $H^{s}(\bbR^n)$, in relation to
\eqref{hGi-3}. One also has 
\begin{equation}\label{incl-Ya}
\big(H^{s}_0(\Omega)\big)^*=H^{-s}(\Omega),\quad s\in\bbR 
\end{equation}
(cf., e.g., \cite{JK95}). For a bounded Lipschitz domain $\Omega\subset\bbR^n$ 
it is known that
\begin{equation}\label{dual-xxx}
\bigl(H^{s}(\Omega)\bigr)^*=H^{-s}(\Omega), \quad - 1/2 <s< 1/2.
\end{equation}
See \cite{Tr02} for this and other related properties. 

To discuss Sobolev spaces on the boundary of a Lipschitz domain, consider
first the case when $\Omega\subset\bbR^n$ is the domain lying above the graph
of a Lipschitz function $\varphi\colon\bbR^{n-1}\to\bbR$. In this setting,
we define the Sobolev space $H^s(\partial\Omega)$ for $0\leq s\leq 1$,
as the space of functions $f\in L^2(\partial\Omega;d^{n-1}\omega)$ with the
property that $f(x',\varphi(x'))$, as a function of $x'\in\bbR^{n-1}$,
belongs to $H^s(\bbR^{n-1})$. This definition is easily adapted to the case
when $\Omega$ is a Lipschitz domain whose boundary is compact,
by using a smooth partition of unity. Finally, for $-1\leq s\leq 0$, we set
\begin{equation}\label{A.6}
H^s(\dOm) = \big(H^{-s}(\dOm)\big)^*, \quad -1 \le s \le 0.
\end{equation}
 From the above characterization of $H^s(\partial\Omega)$ it follows that
any property of Sobolev spaces (of order $s\in[-1,1]$) defined in Euclidean
domains, which are invariant under multiplication by smooth, compactly
supported functions as well as compositions by bi-Lipschitz diffeomorphisms,
readily extends to the setting of $H^s(\partial\Omega)$ (via localization and
pull-back). As a concrete example, for each Lipschitz domain $\Omega$ 
with compact boundary, one has  
\begin{equation} \label{EQ1}
H^s(\partial\Omega)\hookrightarrow L^2(\partial\Omega;d^{n-1} \omega)
\, \text{ compactly if }\,0<s\leq 1.  
\end{equation}
For additional background 
information in this context we refer, for instance, to \cite{Au04}, 
\cite{Au06}, \cite[Chs.\ V, VI]{EE89}, \cite[Ch.\ 1]{Gr85}, 
\cite[Ch.\ 3]{Mc00}, \cite[Sect.\ I.4.2]{Wl87}.

Assuming Hypothesis \ref{h3.1}, we introduce the boundary trace
operator $\ga_D^0$ (the Dirichlet trace) by
\begin{equation}
\ga_D^0\colon C(\ol{\Om})\to C(\dOm), \quad \ga_D^0 u = u|_\dOm.   \label{2.5A}
\end{equation}
Then there exists a bounded linear operator $\gamma_D$
\begin{align}
\begin{split}
& \ga_D\colon H^{s}(\Om)\to H^{s-(1/2)}(\dOm) \hookrightarrow \LdOm,
\quad 1/2<s<3/2, \label{2.6A}  \\
& \ga_D\colon H^{3/2}(\Om)\to H^{1-\varepsilon}(\dOm) \hookrightarrow \LdOm,
\quad \varepsilon \in (0,1) 
\end{split}
\end{align}
(cf., e.g., \cite[Theorem 3.38]{Mc00}), whose action is compatible with that of 
$\ga_D^0$. That is, the two Dirichlet trace  operators coincide on the intersection 
of their domains. Moreover, we recall that
\begin{equation}\label{2.6B}
\ga_D\colon H^{s}(\Om)\to H^{s-(1/2)}(\dOm) \, \text{ is onto for $1/2<s<3/2$}.
\end{equation}

Next, retain Hypothesis \ref{h3.1} and assume that 
\begin{equation}\label{MAR-B.1}
A\in \cM\bigl(H^s(\Omega)\bigr), \quad 1/2<s<3/2.
\end{equation} 
We then introduce the operator
$\ga^A_N$ (the strong Neumann trace) by 
\begin{equation}\label{2.7A}
\ga^A_N=\nu\cdot\ga_D A\nabla \colon H^{s+1}(\Om)\to \LdOm, \quad 1/2<s<3/2, 
\end{equation} 
where $\nu$ denotes the outward pointing normal unit vector to
$\partial\Om$. It follows from \eqref{2.6A} that $\ga_N$ is also a
bounded operator. We seek to define the action of the Neumann trace
operator in other (related) settings. Specifically, 
introduce the weak Neumann trace operator 
\begin{equation}\label{2.8A}
\wti\ga^A_N\colon\big\{u\in H^1(\Om)\,\big|\,L_A u\in L^2(\Om)\big\}
\to H^{-1/2}(\dOm),
\end{equation} 
as follows: Given $u\in H^1(\Om)$ with $L_A u \in L^2(\Om)$ we set  
\begin{equation}\label{2.9A}
\bigl\langle\phi,\wti\ga^A_N u\bigr\rangle_{1/2}
=\mathfrak{q}_{A}(\Phi,u)+(\Phi,L_A u)_{L^2(\Om)},
\end{equation} 
for all $\phi\in H^{1/2}(\dOm)$ and $\Phi\in H^{1}(\Om)$ such that
$\ga_D\Phi=\phi$. We note that the definition \eqref{2.9A} is independent 
of the particular extension $\Phi$ of $\phi$, and that $\wti\ga^A_N$ is bounded.

\section{Sesquilinear Forms and Associated Operators} \lb{sB}
\renewcommand{\theequation}{B.\arabic{equation}}
\renewcommand{\thetheorem}{B.\arabic{theorem}}
\setcounter{theorem}{0} \setcounter{equation}{0}


In this section we describe a few basic facts on sesquilinear forms and
linear operators associated with them.
Let $\cH$ be a complex separable Hilbert space with scalar product
$(\dott,\dott)_{\cH}$ (antilinear in the first and linear in the second
argument), $\cV$ a reflexive Banach space continuously and densely embedded
into $\cH$. Then also $\cH$ embeds continuously and densely into $\cV^*$.
That is,
\begin{equation}
\cV  \hookrightarrow \cH  \hookrightarrow \cV^*.     \lb{B.1}
\end{equation}
Here the continuous embedding $\cH\hookrightarrow \cV^*$ is accomplished via
the identification
\begin{equation}
\cH \ni v \mapsto (\dott,v)_{\cH} \in \cV^*,     \lb{B.2}
\end{equation}
and we use the convention in this manuscript that if $X$ denotes a Banach space, 
$X^*$ denotes the {\it adjoint space} of continuous  conjugate linear functionals on $X$,
also known as the {\it conjugate dual} of $X$.

In particular, if the sesquilinear form
\begin{equation}
{}_{\cV}\langle \dott, \dott \rangle_{\cV^*} \colon \cV \times \cV^* \to \bbC
\end{equation}
denotes the duality pairing between $\cV$ and $\cV^*$, then
\begin{equation}
{}_{\cV}\langle u,v\rangle_{\cV^*} = (u,v)_{\cH}, \quad u\in\cV, \;
v\in\cH\hookrightarrow\cV^*,   \lb{B.3}
\end{equation}
that is, the $\cV, \cV^*$ pairing
${}_{\cV}\langle \dott,\dott \rangle_{\cV^*}$ is compatible with the
scalar product $(\dott,\dott)_{\cH}$ in $\cH$.

Let $T \in\cB(\cV,\cV^*)$. Since $\cV$ is reflexive, $(\cV^*)^* = \cV$, one has
\begin{equation}
T \colon \cV \to \cV^*, \quad  T^* \colon \cV \to \cV^*   \lb{B.4}
\end{equation}
and
\begin{equation}
{}_{\cV}\langle u, Tv \rangle_{\cV^*}
= {}_{\cV^*}\langle T^* u, v\rangle_{(\cV^*)^*}
= {}_{\cV^*}\langle T^* u, v \rangle_{\cV}
= \ol{{}_{\cV}\langle v, T^* u \rangle_{\cV^*}}.
\end{equation}
{\it Self-adjointness} of $T$ is then defined by $T=T^*$, that is,
\begin{equation}
{}_{\cV}\langle u,T v \rangle_{\cV^*}
= {}_{\cV^*}\langle T u, v \rangle_{\cV}
= \ol{{}_{\cV}\langle v, T u \rangle_{\cV^*}}, \quad u, v \in \cV,    \lb{B.5}
\end{equation}
{\it nonnegativity} of $T$ is defined by
\begin{equation}
{}_{\cV}\langle u, T u \rangle_{\cV^*} \geq 0, \quad u \in \cV,    \lb{B.6}
\end{equation}
and {\it boundedness from below of $T$ by $c_T \in\bbR$} is defined by
\begin{equation}
{}_{\cV}\langle u, T u \rangle_{\cV^*} \geq c_T \|u\|^2_{\cH},
\quad u \in \cV.
\lb{B.6a}
\end{equation}
(By \eqref{B.3}, this is equivalent to
${}_{\cV}\langle u, T u \rangle_{\cV^*} \geq c_T \,
{}_{\cV}\langle u, u \rangle_{\cV^*}$, $u \in \cV$.)

Next, let the sesquilinear form $a(\dott,\dott)\colon\cV \times \cV \to \bbC$
(antilinear in the first and linear in the second argument) be
{\it $\cV$-bounded}, that is, there exists a $c_a>0$ such that
\begin{equation}
|a(u,v)| \le c_a \|u\|_{\cV} \|v\|_{\cV},  \quad u, v \in \cV.
\end{equation}
Then $\wti A$ defined by
\begin{equation}
\wti A \colon \begin{cases} \cV \to \cV^*, \\
\, v \mapsto \wti A v = a(\dott,v), \end{cases}    \lb{B.7}
\end{equation}
satisfies
\begin{equation}
\wti A \in\cB(\cV,\cV^*) \, \text{ and } \,
{}_{\cV}\big\langle u, \wti A v \big\rangle_{\cV^*}
= a(u,v), \quad  u, v \in \cV.    \lb{B.8}
\end{equation}
Assuming further that $a(\dott,\dott)$ is {\it symmetric}, that is,
\begin{equation}
a(u,v) = \ol{a(v,u)},  \quad u,v\in \cV,    \lb{B.9}
\end{equation}
and that $a$ is {\it $\cV$-coercive}, that is, there exists a constant
$C_0>0$ such that
\begin{equation}
a(u,u)  \geq C_0 \|u\|^2_{\cV}, \quad u\in\cV,    \lb{B.10}
\end{equation}
respectively, then,
\begin{equation}
\wti A \colon \cV \to \cV^* \, \text{ is bounded, self-adjoint, and boundedly
invertible.}    \lb{B.11}
\end{equation}
Moreover, denoting by $A$ the part of $\wti A$ in $\cH$ defined by
\begin{align}
\dom(A) = \big\{u\in\cV \,|\, \wti A u \in \cH \big\} \subseteq \cH, \quad
A= \wti A\big|_{\dom(A)}\colon \dom(A) \to \cH,   \lb{B.12}
\end{align}
then $A$ is a (possibly unbounded) self-adjoint operator in $\cH$ satisfying
\begin{align}
& A \geq C_0 I_{\cH},   \lb{B.13}  \\
& \dom\big(A^{1/2}\big) = \cV.  \lb{B.14}
\end{align}
In particular,
\begin{equation}
A^{-1} \in\cB(\cH).   \lb{B.15}
\end{equation}
The facts \eqref{B.1}--\eqref{B.15} are a consequence of the Lax--Milgram
theorem and the second representation theorem for symmetric sesquilinear forms.
Details can be found, for instance, in \cite[Sects.\ VI.3, VII.1]{DL00},
\cite[Ch.\ IV]{EE89}, and \cite{Li62}.

Next, consider a symmetric form $b(\dott,\dott)\colon \cV\times\cV\to\bbC$
and assume that $b$ is {\it bounded from below by $c_b\in\bbR$}, that is,
\begin{equation}
b(u,u) \geq c_b \|u\|_{\cH}^2, \quad u\in\cV.  \lb{B.19}
\end{equation}
Introducing the scalar product
$(\dott,\dott)_{\cV_b}\colon \cV\times\cV\to\bbC$
(and the associated norm $\|\cdot\|_{\cV_b}$) by
\begin{equation}
(u,v)_{\cV_b} = b(u,v) + (1- c_b)(u,v)_{\cH}, \quad u,v\in\cV,  \lb{B.20}
\end{equation}
turns $\cV$ into a pre-Hilbert space $(\cV; (\dott,\dott)_{\cV_b})$,
which we denote by
$\cV_b$. The form $b$ is called {\it closed} in $\cH$ if $\cV_b$ is actually
complete, and hence a Hilbert space. The form $b$ is called {\it closable}
in $\cH$ if it has a closed extension. If $b$ is closed in $\cH$, then
\begin{equation}
|b(u,v) + (1- c_b)(u,v)_{\cH}| \le \|u\|_{\cV_b} \|v\|_{\cV_b},
\quad u,v\in \cV,
\lb{B.21}
\end{equation}
and
\begin{equation}
|b(u,u) + (1 - c_b)\|u\|_{\cH}^2| = \|u\|_{\cV_b}^2, \quad u \in \cV,
\lb{B.22}
\end{equation}
show that the form $b(\dott,\dott)+(1 - c_b)(\dott,\dott)_{\cH}$ is a
symmetric, $\cV$-bounded, and $\cV$-coercive sesquilinear form. Hence,
by \eqref{B.7} and \eqref{B.8}, there exists a linear map
\begin{equation}
\wti B_{c_b} \colon \begin{cases} \cV_b \to \cV_b^*, \\
\hspace*{.51cm}
v \mapsto \wti B_{c_b} v = b(\dott,v) +(1 - c_b)(\dott,v)_{\cH},
\end{cases}
\lb{B.23}
\end{equation}
with
\begin{equation}
\wti B_{c_b} \in\cB(\cV_b,\cV_b^*) \, \text{ and } \,
{}_{\cV_b}\big\langle u, \wti B_{c_b} v \big\rangle_{\cV_b^*}
= b(u,v)+(1 -c_b)(u,v)_{\cH}, \quad  u, v \in \cV.    \lb{B.24}
\end{equation}
Introducing the linear map
\begin{equation}
\wti B = \wti B_{c_b} + (c_b - 1)\wti I \colon \cV_b\to\cV_b^*,
\lb{B.24a}
\end{equation}
where $\wti I\colon \cV_b\hookrightarrow\cV_b^*$ denotes
the continuous inclusion (embedding) map of $\cV_b$ into $\cV_b^*$, one
obtains a self-adjoint operator $B$ in $\cH$ by restricting $\wti B$ to $\cH$,
\begin{align}
\dom(B) = \big\{u\in\cV \,\big|\, \wti B u \in \cH \big\} \subseteq \cH, \quad
B= \wti B\big|_{\dom(B)}\colon \dom(B) \to \cH,   \lb{B.25}
\end{align}
satisfying the following properties:
\begin{align}
& B \geq c_b I_{\cH},  \lb{B.26} \\
& \dom\big(|B|^{1/2}\big) = \dom\big((B - c_bI_{\cH})^{1/2}\big)
= \cV,  \lb{B.27} \\
& b(u,v) = \big(|B|^{1/2}u, U_B |B|^{1/2}v\big)_{\cH}    \lb{B.28b} \\
& \hspace*{.97cm}
= \big((B - c_bI_{\cH})^{1/2}u, (B - c_bI_{\cH})^{1/2}v\big)_{\cH}
+ c_b (u, v)_{\cH}
\lb{B.28} \\
& \hspace*{.97cm}
= {}_{\cV_b}\big\langle u, \wti B v \big\rangle_{\cV_b^*},
\quad u, v \in \cV, \lb{B.28a} \\
& b(u,v) = (u, Bv)_{\cH}, \quad  u\in \cV, \; v \in\dom(B),  \lb{B.29} \\
& \dom(B) = \{v\in\cV\,|\, \text{there exists an $f_v\in\cH$ such that}  \no \\
& \hspace*{3.05cm} b(w,v)=(w,f_v)_{\cH} \text{ for all $w\in\cV$}\},
\lb{B.30} \\
& Bu = f_u, \quad u\in\dom(B),  \no \\
& \dom(B) \text{ is dense in $\cH$ and in $\cV_b$}.  \lb{B.31}
\end{align}
Properties \eqref{B.30} and \eqref{B.31} uniquely determine $B$.
Here $U_B$ in \eqref{B.28} is the partial isometry in the polar
decomposition of $B$, that is,
\begin{equation}
B=U_B |B|, \quad  |B|=(B^*B)^{1/2} \geq 0.   \lb{B.32}
\end{equation}
The operator $B$ is called the {\it operator associated with the form $b$}.

The facts \eqref{B.19}--\eqref{B.32} comprise the second representation
theorem of sesquilinear forms (cf.\ \cite[Sect.\ IV.2]{EE89},
\cite[Sects.\ 1.2--1.5]{Fa75}, and \cite[Sect.\ VI.2.6]{Ka80}).

\section{On Heat Kernel and Green's Function Bounds} \lb{sC}
\renewcommand{\theequation}{C.\arabic{equation}}
\renewcommand{\thetheorem}{C.\arabic{theorem}}
\setcounter{theorem}{0} \setcounter{equation}{0}


In this appendix we briefly recall some bounds for heat kernels and Green's functions and 
prove the Green's function bounds as the latter in dimension $n=2$ are difficult to find in the 
literature.  

In the case of Dirichlet boundary conditions (for general open sets 
$\Om \subseteq \bbR^n$), $e^{- t (- \Delta_{D, \Omega})}$ is known to be {\it ultracontractive}, that is, a bounded map from 
$L^2(\Om; d^n x)$ into $L^\infty(\Om; d^n x)$, and to have a nonnegative integral kernel satisfying for 
all $(x,y) \in \Om \times \Om$, and all $t>0$, 
\begin{equation}
0 \leq e^{- t (- \Delta_{D, \Omega})}(x,y) \leq e^{- t H_0}(x,y) 
= (4 \pi t)^{-n/2} e^{- |x - y|^2/(4t)} \leq (4 \pi t)^{- n/2}    \lb{3.16}
\end{equation}
due to domain monotonicity, as discussed in 
\cite[Lemma\ 2.1.2, Example\ 2.1.8, Theorems\ 2.1.6 and 2.3.6]{Da89}. 
Here $H_0$ denotes the self-adjoint realization of $-\Delta$ in $L^2(\bbR^n; d^n x)$, 
\begin{equation}
H_0 = - \Delta, \quad \dom(H_0) = H^2(\bbR^n).  
\end{equation} 
In addition, assuming Hypothesis \ref{h4.1}, there exists a constant $c_{\gamma,a_0}>0$, such 
that for all $(x,y) \in \Om \times \Om$, and all $\gamma \in (0,1)$, $t > 0$, one has the bound 
\begin{equation}
0 \leq e^{- t L_{D, \Omega}}(x,y) \leq c_{\gamma,a_0} t^{- n/2} 
e^{- |x - y|^2/[4(1+\gamma) a_1t]}, 
\lb{3.16A} 
\end{equation}     
by \cite[Corollary\ 3.2.8]{Da89}. In particular, $e^{- t L_{D, \Omega}}$ is ultracontractive. 

Similarly, assuming again Hypothesis \ref{h4.1}, also 
$e^{- t L_{N, \Omega}}$ is known to be ultracontractive and to have a nonnegative integral 
kernel satisfying for some constant $C_{\gamma,a_0,\Om}>0$, 
all $(x,y) \in \Om \times \Om$, and all $\gamma \in (0,1)$, $t > 0$,  
\begin{equation}
0 \leq e^{- t L_{N, \Omega}}(x,y) \leq C_{\gamma, a_0, \Om} \max (t^{- n/2}, 1) 
e^{- |x - y|^2/[4(1+\gamma) a_1 t]},      \lb{3.16a} 
\end{equation}     
by \cite[Theorems\ 1.3.9, 2.4.4, and 3.2.9]{Da89} (see also the more 
recent \cite{CWZ94}, \cite[Theorem\ 4.4]{AtE97}, \cite{Da00a}, \cite{Ou04}, \cite[Ch.\ 6]{Ou05}). 
We note that the Lipschitz domain hypothesis is crucial in connection with the estimate 
\eqref{3.16a} as it has to be modified in less regular domains characterized by 
H\"older regularity of $\partial\Om$, as discussed in \cite[Proposition\ 3]{BD02} (and the 
references cited in this context).  

For completeness, we also mention the explicit (but rather crude) Green's function estimate in the 
context of the Dirichlet Laplacian and for $L_{D,\Om}$, based on domain monotonicity discussed, for instance, in 
\cite{AtE97}, \cite{Da87}, \cite[Ch.\ 3]{Da89}, \cite{vdB90} (see also 
\cite[Sect.\ 1.2]{Ke94}, \cite{vdB90}), and the references therein: For all 
$(x,y) \in \Om \times \Om$, and all $\lambda > 0$, 
\begin{align}
& G^{(0)}_{D,\Omega}(-\lambda,x,y) = (- \Delta_{D,\Om} + \lambda I_{\Om})^{-1}(x,y)   \no \\
& \quad \leq G_0 (-\lambda, x,y) = (H_0 + \lambda I_{\bbR^n})^{-1}(x,y)     \no \\
& \quad = \f{1}{2 \pi} \bigg(\f{2\pi |x - y|}{\lambda^{1/2}}\bigg)^{(2-n)/2} 
K_{(n-2)/2} (\lambda^{1/2} |x - y|)     \no \\
& \quad \leq \begin{cases} C_{\lambda,\Omega,n} |x - y|^{2-n}, & n \geq 3, \\[1mm]
C_{\lambda,\Omega} \big|\ln\big(1 + |x-y|^{-1}\big)\big|, & n=2,  \end{cases} 
\quad  x \neq y,     \lb{3.17}
\end{align}
with $K_{\nu}(\cdot)$ the modified irregular Bessel function of order $\nu$ 
(cf.\ \cite[Sect.\ 9.6]{AS72}). Similarly, in connection with $L_{D,\Om}$ 
one obtains, for all $(x,y) \in \Om \times \Om$, and $\lambda > 0$, 
\begin{equation}
G_{D,\Omega}(-\lambda,x,y) \leq 
\begin{cases} C_{\lambda,\Omega,n} |x - y|^{2-n}, & n \geq 3, \\[1mm]
C_{\lambda,\Omega} \big|\ln\big(1 + |x-y|^{-1}\big)\big|, & n=2,  \end{cases} 
\quad  x \neq y.      \lb{3.17a}
\end{equation}
The estimates \eqref{3.17} and \eqref{3.17a} ignore all effects of 
the boundary $\partial\Omega$ of $\Omega$, but they suffice for the purpose at hand.  

Finally, we explicitly derive the analog of \eqref{3.17a} in the case of Neumann boundary 
conditions as this is not so simple to find in the literature (particularly for $n=2$).

\begin{lemma} \lb{C.1} 
Assume Hypothesis \ref{h4.1}. Then  
\begin{align} 
\begin{split}
& G_{N,\Om} (- \lambda,x,y) \leq \begin{cases} 
C_{a_0,a_1,\lambda,\Omega,n} |x - y|^{2-n}, & n \geq 3, \\[1mm]
C_{a_0,a_1,\lambda,\Omega} \big|\ln\big(1 + |x - y|^{-1}\big)\big|, & n=2,  \end{cases}   \\[1mm] 
& \hspace*{4.95cm}  \lambda > 0, \; x, y \in \Omega, \; x \neq y.       \lb{C.6}
\end{split}
\end{align}
More generally, let $\alpha \in (0, (n/2)]$, then
\begin{align} 
\begin{split}
& (H_{N,\Om} + \lambda I_{\Om})^{-\alpha} (x,y) \leq \begin{cases} 
C_{a_0,a_1,\alpha,\lambda,\Omega,n} |x - y|^{2\alpha - n}, & \alpha \in (0, (n/2)), \\[1mm]
C_{a_0,a_1,\alpha,\lambda,\Omega} \big|\ln\big(1 + |x - y|^{-1}\big)\big|, & \alpha=n/2,  \end{cases}   \\[1mm] 
& \hspace*{7.75cm}  \lambda > 0, \; x, y \in \Omega, \; x \neq y.       \lb{C.7}
\end{split}
\end{align}
\end{lemma}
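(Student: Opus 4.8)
The plan is to bound the Green's functions and fractional powers of $L_{N,\Om}$ (and $H_{N,\Om}$) by comparison with the Neumann heat kernel, integrating the semigroup bound \eqref{3.16a} against the measure $t^{\alpha-1}\,dt/\Gamma(\alpha)$. Concretely, for $\alpha\in(0,\infty)$ and $\lambda>0$ one has the subordination identity
\begin{equation}
(L_{N,\Om}+\lambda I_\Om)^{-\alpha} = \frac{1}{\Gamma(\alpha)}\int_0^\infty t^{\alpha-1} e^{-\lambda t}\, e^{-t L_{N,\Om}}\, dt,
\end{equation}
which at the level of integral kernels (legitimate by the nonnegativity of the kernels, Tonelli's theorem, and the fact established in Theorem \ref{t4.3} that the semigroups are integral operators) gives
\begin{equation}
0 \leq (L_{N,\Om}+\lambda I_\Om)^{-\alpha}(x,y) = \frac{1}{\Gamma(\alpha)}\int_0^\infty t^{\alpha-1} e^{-\lambda t}\, K_{N,\Om}(t,x,y)\, dt.
\end{equation}
First I would substitute the pointwise bound \eqref{3.16a}, namely $K_{N,\Om}(t,x,y)\leq C_{\gamma,a_0,\Om}\max(t^{-n/2},1)\exp\{-|x-y|^2/[4(1+\gamma)a_1 t]\}$, split the $t$-integral at $t=1$, and estimate the two pieces separately.

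For the piece $t\in(0,1)$, the relevant integral is $\int_0^1 t^{\alpha-1-n/2} e^{-\lambda t} e^{-c|x-y|^2/t}\,dt$ with $c = 1/[4(1+\gamma)a_1]$; dropping $e^{-\lambda t}\leq 1$ and rescaling $s = |x-y|^2/t$ (or quoting the standard Bessel-type asymptotics as in \eqref{3.17}) produces the bound $\lesssim |x-y|^{2\alpha-n}$ when $\alpha<n/2$ and $\lesssim |\ln(|x-y|)|$ when $\alpha = n/2$ (for $|x-y|$ bounded, which holds since $\Om$ is bounded, one absorbs this into $|\ln(1+|x-y|^{-1})|$). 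For the piece $t\in(1,\infty)$, the factor $\max(t^{-n/2},1)=1$ and $e^{-\lambda t}$ guarantees convergence of $\int_1^\infty t^{\alpha-1} e^{-\lambda t}\,dt = C(\alpha,\lambda)<\infty$, and since $\Om$ is bounded this tail contributes a term bounded by a constant, which is again dominated by $|x-y|^{2\alpha-n}$ (resp.\ $|\ln(1+|x-y|^{-1})|$) up to adjusting the constant, because $|x-y|$ ranges over a bounded set. Taking $\alpha=1$ yields \eqref{C.6}; the general $\alpha\in(0,n/2]$ case yields \eqref{C.7} with $L_{N,\Om}$ in place of $H_{N,\Om}$.

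To pass from $L_{N,\Om}$ to $H_{N,\Om} = L_{N,\Om}+V$ with $0\leq V\in L^1_{\loc}(\Om;d^nx)$, I would invoke the domination $e^{-tH_{N,\Om}}\preccurlyeq e^{-t L_{N,\Om}}$ for $t\geq 0$ established in Remark \ref{r3.9} (via the Trotter--Kato formula and monotone convergence for forms), which by \eqref{1.2}/Theorem \ref{t2.3} gives the pointwise kernel inequality $0\leq e^{-tH_{N,\Om}}(x,y)\leq K_{N,\Om}(t,x,y)$; then the same subordination integral yields $(H_{N,\Om}+\lambda I_\Om)^{-\alpha}(x,y)\leq (L_{N,\Om}+\lambda I_\Om)^{-\alpha}(x,y)$, so \eqref{C.7} follows from the already-proved bound for $L_{N,\Om}$.

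The main obstacle, though more bookkeeping than conceptual, is the careful handling of the $t\to 0$ endpoint of the subordination integral together with the restriction $\alpha\leq n/2$: one must verify that $t^{\alpha-1-n/2}e^{-c|x-y|^2/t}$ is integrable near $0$ (it is, for every fixed $x\neq y$, because of the Gaussian factor) and extract the correct $|x-y|$-power uniformly; the borderline case $\alpha = n/2$ is where the logarithm appears and requires the substitution argument to be done with care (matching the form of \eqref{3.17}). One should also make explicit that the constants depend on $a_1$ (through $c$) and on $\gamma\in(0,1)$, which can be fixed once and for all, e.g.\ $\gamma=1/2$, so that only $a_0,a_1,\alpha,\lambda,\Om,n$ remain as in the statement. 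Everything else is a routine application of Tonelli's theorem and elementary one-dimensional integral estimates.
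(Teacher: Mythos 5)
Your proof is correct and reaches the same conclusions as the paper, but it takes a genuinely more elementary technical route. The paper also starts from the subordination identity \eqref{C.8} and the Neumann heat kernel bound \eqref{3.16a}, but then proceeds by converting the remaining $t$-integral \emph{exactly} into a modified Bessel function via \eqref{C.9} (after first absorbing $\max(t^{-n/2},1)$ into $c_\varepsilon t^{-n/2}e^{\varepsilon t}$, which forces the temporary restriction $\lambda>\varepsilon$, later removed by letting $\varepsilon\downarrow 0$). The paper then quotes the small-argument asymptotics \eqref{C.10}--\eqref{C.12} of $K_\nu$ to extract the powers $|x-y|^{2\alpha-n}$ and the logarithm at the borderline $\alpha=n/2$. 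Your argument instead splits the integral at $t=1$, eliminates the special-function machinery entirely via the substitution $s=|x-y|^2/t$, and handles the tail $t>1$ by the trivial convergence of $\int_1^\infty t^{\alpha-1}e^{-\lambda t}\,dt$; as a bonus this avoids the auxiliary $\varepsilon$ parameter altogether. What the paper's Bessel route buys is a compact closed-form intermediate expression; what your route buys is elementarity and the absence of a limiting argument in $\varepsilon$. One further point worth noting: the paper's proof passes silently from $H_{N,\Om}$ (which, per Remark \ref{r3.9}, is $L_{N,\Om}+V$) to the heat kernel bound \eqref{3.16a} for $L_{N,\Om}$; you make the necessary domination step $e^{-tH_{N,\Om}}\preccurlyeq e^{-tL_{N,\Om}}$ explicit by citing Remark \ref{r3.9}, which is actually a small improvement in rigor over the written proof.
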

\begin{proof}
The main ingredients in deriving the bounds \eqref{C.6} and \eqref{C.7} are the identity 
\begin{equation}
(H + \lambda I_{\cH})^{-\alpha} = \Gamma(\alpha)^{-1} \int_0^{\infty} dt \, t^{\alpha -1} e^{-t (H + \lambda)}, 
\quad \alpha > 0, \; \lambda > 0,    \lb{C.8} 
\end{equation}
for a given self-adjoint operator $H \geq 0$, and the integral representation (see, e.g., \cite[\# 84327, p.\ 959]{GR80}) 
\begin{equation}
\int_0^{\infty} dt \, t^{- \nu - 1} e^{- at - b t^{-1}} = 2 (b/a)^{-\nu/2} K_{\nu}\big(2(ab)^{1/2}\big), \quad 
\nu \in \bbR, \; a>0, \; b>0.    \lb{C.9} 
\end{equation}
The asymptotic behavior of $K_{\nu}(\cdot)$ (cf., \cite[p.\ 375, 378]{AS72})
\begin{align}
& K_{\nu}(x) \underset{x\downarrow 0}{=} \begin{cases} 2^{\nu-1} 
\Gamma(\nu) x^{-\nu}\big[1+\Oh(\big(x^2\big)\big], & \nu > 0, \\[1mm] 
- [\ln(x/2) + C_E]\big[1+\Oh(\big(x^2\big)\big], & \nu = 0, \end{cases}     \lb{C.10} \\[2mm]
& K_{\nu}(x) \underset{x\uparrow \infty}{=} (\pi/2)^{1/2} x^{-1/2} e^{-x} \big[1+\Oh(\big(x^{-1}\big)\big], 
\quad \nu \geq 0,    \lb{C.11} 
\end{align}
with $C_E$ denoting Euler's constant (see, \cite[p.\ 255]{AS72}), then implies the bounds 
\begin{equation}
K_\nu(x) \leq \begin{cases} C \big[1 + \Gamma(\nu) 2^{\nu -1} x^{-\nu}\big] 
\big[1 + (2x/\pi)^{1/2}\big]^{-1} e^{-x}, &\nu > 0, \\[1mm]
C \big[\ln\big(1 + x^{-1}\big)\big] \big[1 + (2x/\pi)^{1/2}\big]^{-1} e^{-x}, & \nu =0, 
\end{cases} \quad x>0,    \lb{C.12} 
\end{equation} 
for some universal constant $C>0$ in either case. Hence, combining \eqref{C.8} and \eqref{C.9} 
with the heat kernel bound \eqref{3.16a}, and using the crude estimate,
\begin{equation}
\max (t^{- n/2}, 1) \leq c_{\varepsilon} t^{-n/2} e^{\varepsilon t}, \quad \varepsilon >0, \;t>0, 
\end{equation}
for some constant $c_{\varepsilon} > 0$, then yields 
\begin{align}
& (H_{N,\Om} + \lambda I_{\Om})^{-\alpha} (x,y)   \no \\
& \quad \leq \Gamma(\alpha)^{-1} 
C_{\gamma, a_0,\Om,\varepsilon} \int_0^{\infty} dt \, t^{\alpha - 1 - (n/2)} 
e^{\varepsilon t} 
e^{-\lambda t} e^{- |x - y|^2 [4 (1 + \gamma) a_1 t]^{-1}}    \no \\
& \quad \leq C_{\gamma, a_0,a_1,\Om, \alpha, \varepsilon} 
\bigg[\f{|x - y|^2}{\lambda - \varepsilon}\bigg]^{(2\alpha -n)/4} 
K_{(n-2\alpha)/2} \big(((1+\gamma) a_1)^{-1/2}(\lambda - \varepsilon)^{1/2} |x - y| \big),  \no \\
& \quad \leq \begin{cases} 
C_{a_0,a_1,\alpha,\lambda,\Omega,\varepsilon,n} |x - y|^{2\alpha - n}, & \alpha \in (0, (n/2)), \\[1mm]
C_{a_0,a_1,\alpha,\lambda,\Omega,\varepsilon} \big|\ln\big(1 + |x - y|^{-1}\big)\big|, & \alpha=n/2,  
\end{cases}   \no \\[1mm] 
& \hspace*{2.7cm} \lambda > \varepsilon, \; \alpha \in (0, (n/2)], \; x, y \in \Om, \; x \neq y. 
\end{align}
Here the last inequality follows from \eqref{C.12} and the fact that $\Om$ is bounded. 
Since $\varepsilon > 0$ can be chosen arbitrarily small, this proves \eqref{C.6} and \eqref{C.7}.
\end{proof}

We note again that the bound \eqref{C.6} for $n\geq 3$ can be found, for instance, in \cite[Lemma\ 3.2]{DM96} (see also \cite{AKK12}, \cite{CK11}, \cite{TKB12}). 

We emphasize that the basic ingredients we used to derive \eqref{C.6} and \eqref{C.7} appear, for 
instance, in \cite[Sect.\ 3.4]{Da89}. Moreover, the identical proof also yields the crude Dirichlet bound 
\eqref{3.17a} (taking $\varepsilon =0$ throughout). In our opinion, the value of presenting Lemma \ref{C.1} (besides establishing 
the bound for $n=2$) lies in providing an explicit reference for the result and the ease with which it 
is derived. 

\medskip

{\bf Acknowledgments.} We are indebted to Zhen-Qing Chen, E.\ Brian Davies, Joe Diestel, 
Chris Evans, Harald Hanche-Olsen, Steve Hofmann, Seick Kim, Vladimir Maz'ya, El Maati Ouhabaz, 
Michael Pang, Derek Robinson, Barry Simon, and Qi Zhang for very helpful correspondence and 
discussions. Special thanks are due to Seick Kim for making available unpublished work 
to us, to El Maati Ouhabaz for his detailed comments, especially, on Section \ref{s4}, to Michael Pang for his help with the material in Appendix \ref{sC}, and to Derek Robinson for supplying us with very helpful comments and corrections 
for our manuscript. 


\end{document}